\newcommand\bigsubset[1][1.19]{%
   \mathrel{\vcenter{\hbox{\scalebox{#1}{$\subset$}}}}}
\newcounter{stepctr}
{\end{list}}
\newtheorem{thm}{Theorem}[section]
\newtheorem{prop}[thm]{Proposition}
\newtheorem{cor}[thm]{Corollary}
\theoremstyle{definition}
\newtheorem{dfn}[thm]{Definition}
\newtheorem{ex}[thm]{Example}
\newtheorem{rema}[thm]{Remark}
\newtheorem{lem}[thm]{Lemma}
\newtheorem{prob*}{Open problem}
\newcommand{\demo}{\begin{proof}}
\newcommand{\R}{\ensuremath{\mathcal{R}}}
\newcommand{\N}{\mathbb{N}}
\newcommand{\C}{\mathbb{C}}
\def\ll^2{{\mathcal L}(\ell^2(\N))}
\def\f^0x{{\mathcal F^0}(X) }
\title
{\bf  On the $g_{z}$-Kato decomposition and    generalization of  Koliha  Drazin invertibility}
\author{Z. Aznay, A. Ouahab, H. Zariouh}
\date{}
\begin{document}

\maketitle \thispagestyle{empty}

\begin{abstract}\noindent\baselineskip=10pt
In  \cite{koliha}, Koliha  proved  that   $T\in L(X)$ ($X$ is a complex  Banach space) is   generalized Drazin invertible operator       equivalent to  there exists   an      operator $S$ commuting  with $T$   such that $STS = S$ and    $\sigma(T^{2}S - T)\subset\{0\}$ which is  equivalent to say that $0\not\in \mbox{acc}\,\sigma(T).$  Later, in \cite{rwassa,rwassa1}   the authors     extended the class of generalized  Drazin invertible operators and they also  extended    the class of pseudo-Fredholm operators introduced by Mbekhta \cite{mbekhta} and other classes  of   semi-Fredholm   operators.
 As a continuation of these works, we introduce and study   the class of $g_{z}$-invertible (resp.,  $g_{z}$-Kato) operators which generalizes the class of generalized Drazin invertible operators (resp.,   the class  of    generalized Kato-meromorphic operators   introduced by  \v{Z}ivkovi\'{c}-Zlatanovi\'{c} and  Duggal in \cite{rwassa2}). Among other results, we prove  that  $T$ is  $g_{z}$-invertible   if and only if   $T$ is $g_{z}$-Kato with $\tilde{p}(T)=\tilde{q}(T)<\infty$  which is equivalent to   there exists    an      operator $S$ commuting  with $T$     such that $STS = S$ and     $\mbox{acc}\,\sigma(T^{2}S - T)\subset\{0\}$ which   in turn    is equivalent  to say that  $0\not\in \mbox{acc}\,(\mbox{acc}\,\sigma(T)).$     As application and  using  the concept  of the Weak SVEP introduced   at the end  of  this paper, we  give new  characterizations  of Browder-type theorems.

 \end{abstract}

 \baselineskip=15pt
 \footnotetext{\small \noindent  2020 AMS subject
classification: Primary  47A10, 47A11, 47A15, 47A25, 47A53, 47A55 \\
\noindent Keywords:  $g_{z}$-Kato decomposition, $g_{z}$-invertible operator, Weak SVEP.} \baselineskip=15pt

\section{Introduction}
 Let $T \in L(X);$ where   $L(X)$ is the  Banach algebra of bounded linear operators acting on an infinite dimensional complex Banach space $(X,\|.\|).$  Throughout this paper   $T^{*},$ $\alpha(T)$ and  $\beta(T)$ means   respectively,     the dual of $T,$   the dimension of the kernel $\mathcal{N}(T)$  and the codimension of the range $\R(T).$ The ascent and the descent of $T$ are defined  by $p(T):=\inf\{n\in \mathbb{N}: \mathcal{N}(T^n) = \mathcal{N}(T^{n+1})\}$ (with $\mbox{inf}\emptyset=\infty$)    and  $q(T):= \inf\{n\in \mathbb{N}: \R(T^n) = \R(T^{n+1})\}.$  A  subspace $M$ of $X$ is $T$-invariant if $T(M)\subset M$ and   the restriction of $T$ on $M$ is denoted by $T_{M}.$   $(M,N) \in \mbox{Red}(T)$  if $M,$   $N$ are  closed $T$-invariant subspaces and  $X=M\oplus N$      ($M\oplus N$ means that $M\cap N=\{0\}$).    Let  $n\in\N,$   denote by   $T_{[n]}:=T_{\mathcal{R}(T^{n})}$    and by     $m_T:=\mbox{inf}\{n \in \N  :  \mbox{inf}\{\alpha(T_{[n]}),\beta(T_{[n]})\}<\infty\}$   the \textit{essential  degree} of   $T.$  According to \cite{berkani-sarih,mbekhta-muller}, $T$ is called upper semi-B-Fredholm  (resp., lower semi-B-Fredholm)  if the \textit{essential ascent} $p_{e}(T):=\mbox{inf}\{n \in \N  :  \alpha(T_{[n]})<\infty\}<\infty$ and $\R(T^{p_{e}(T)+1})$ is closed  (resp.,  the \textit{essential descent}  $q_{e}(T):=\mbox{inf}\{n \in \N  :  \beta(T_{[n]})<\infty\}<\infty$ and $\R(T^{q_{e}(T)})$ is closed). If  $T$ is   an upper or a lower  (resp.,  upper and   lower)  semi-B-Fredholm then $T$ it is called   \emph{semi-B-Fredholm} (resp., \emph{B-Fredholm})  and      its     index   is  defined   by $\mbox{ind}(T) := \alpha(T_{[m_{T}]})-\beta(T_{[m_{T}]}).$ $T$ is said to be an upper semi-B-Weyl (resp., a lower semi-B-Weyl, B-Weyl,  left Drazin invertible, right Drazin invertible, Drazin invertible) if  $T$ is an upper semi-B-Fredholm with $\mbox{ind}(T)\leq 0$  (resp., $T$ is a lower  semi-B-Fredholm with $\mbox{ind}(T)\geq 0,$ $T$ is a B-Fredholm with $\mbox{ind}(T)=0,$   $T$ is an upper semi-B-Fredholm and $p(T_{[m_{T}]})<\infty,$ $T$ is a lower  semi-B-Fredholm and $q(T_{[m_{T}]})<\infty,$  $p(T_{[m_{T}]})=q(T_{[m_{T}]})<\infty$).   If $T$ is  upper semi-B-Fredholm (resp.,  lower semi-B-Fredholm,    semi-B-Fredholm, B-Fredholm, upper semi-B-Weyl,  lower semi-B-Weyl, B-Weyl,  left Drazin invertible, right Drazin invertible, Drazin invertible) with essential degree $m_{T}=0,$ then $T$ is said to be  an upper semi-Fredholm (resp.,  lower semi-Fredholm,   semi-Fredholm, Fredholm, upper semi-Weyl, lower semi-Weyl, Weyl, upper semi-Browder, lower semi-Browder, Browder) operator.  $T$ is said to be  bounded below if $T$ is upper semi-Fredholm with $\alpha(T)=0.$
\par  The degree of stable iteration of $T$ is defined  by  $\mbox{dis}(T)=\mbox{inf}\,\Delta(T);$     where    $$\Delta(T):=\{m\in\N \,:\,  \alpha(T_{[m]})= \alpha(T_{[r]}),\,\forall r \in \N  \,  \, r\geq m \}.$$   $T$ is said to be  semi-regular if $\R(T)$ is closed and $\mbox{dis}(T)=0,$ and      $T$ is said to be  quasi-Fredholm if there exists $n \in \N$ such that  $\R(T^{n})$  is closed  and $T_{[n]}$  is semi-regular,    see   \cite{labrousse,mbekhta}. Note that every semi-B-Fredholm operator is quasi-Fredholm \cite[Proposition 2.5]{berkani-sarih}.
\par    $T$ is said to have the SVEP  at $\lambda\in\mathbb{C}$ \cite{aiena} if  for every open neighborhood $U_\lambda$ of $\lambda,$  $f\equiv 0$ is the only analytic   solution   of the equation $(T-\mu I)f(\mu)=0\quad\forall\mu\in U_\lambda.$     $T$ is said to have  the SVEP on $A\subset\mathbb{C}$ if $T$ has the SVEP at every $\lambda\in A,$  and   is said to have  the SVEP if it  has the SVEP on $\mathbb{C}.$  It is easily seen that $T\oplus S$ has the SVEP at $\lambda$ if and only if $T$ and $S$ have the SVEP at $\lambda,$ see \cite[Theorem 2.15]{aiena}.   Moreover,
\[p(T-\lambda I)<\infty \Longrightarrow \mbox{ T has the SVEP at } \lambda\,\,\,\,(A)\]
\[q(T-\lambda I)<\infty \Longrightarrow \mbox{ $T^*$ has the SVEP at } \lambda,\,\,\,\, (B)\] and    these  implications  become  equivalences  if $T-\lambda I$  has topological uniform descent \cite[Theorem 2.97, Theorem 2.98]{aiena}.  For  definitions and properties  of operators which have topological uniform descent,  see \cite{grabiner}.
 \begin{dfn}\cite{aiena} (i)  The local spectrum  of $T$ at  $x \in X$  is the set defined by $$
\sigma_{T}(x):= \left\{
    \begin{array}{ll}
       \lambda \in \C: \text{ for all open neighborhood } U_{\lambda} \text{ of } \lambda \text{ and analytic function } \\
       f: U_{\lambda} \longrightarrow X \text{ there exists } \mu \in U_{\lambda} \text{ such that } (T-\mu I)f(\mu)\neq x.
    \end{array}
\right\}
$$
(ii) If $F$ is a complex closed   subset,   then the local spectral subspace of  $T$ associated to $F$ is defined by $$X_{T}(F):=\{x\in X: \sigma_{T}(x)\subset F\}.$$
\end{dfn}
\par $T$ is nilpotent of degree $d$ if $T^{d}=0$ and $T^{d-1}\neq0$ (the null operator is nilpotent of degree 0).   $T $ is said a quasi-nilpotent   (resp., Riesz, meromorphic)  operator if $T-\lambda I$ is invertible  (resp., Browder, Drazin invertible)   for all non-zero complex  $\lambda.$ Note that $T$ is nilpotent $\Longrightarrow$ $T$ is quasi-nilpotent $\Longrightarrow$ $T$ is Riesz  $\Longrightarrow$ $T$ is meromorphic.
Denote  by $\mathcal{K}(T)$ the analytic core of $T$   (see \cite{mbekhta}):
$$\mathcal{K}(T)=\{x \in X : \exists \epsilon>0, \exists (u_{n})_{n} \subset X \text{ such that } x=u_{0}, Tu_{n+1}=u_{n} \text{ and } \|u_{n}\|\leq \epsilon^{n}\|x\| \,\,  \forall n \in \N\},$$
and by $\mathcal{H}_{0}(T)$ the  quasi-nilpotent part of $T:$ \,\,\,\, $\mathcal{H}_{0}(T)=\{x \in X : \lim\limits_{n \rightarrow \infty}\,\|T^{n}x\|^{\frac{1}{n}}=0\}.$

\medskip

 In \cite[Theorem 4]{kato}, Kato  proved that if $T$ is a semi-Fredholm operator then  $T$ is of Kato-type of degree $d,$  that's there exists $(M,N) \in \mbox{Red}(T)$ such that:
\begin{enumerate}[nolistsep]
\item[(i)]  $T_{M}$ is semi-regular.
\item[(ii)]   $T_{N}$ is nilpotent of degree $d.$
\end{enumerate}
  In a few years later  Labrousse \cite{labrousse}  characterized the Hilbert  space operators     having  Kato-type decomposition.   These    important  results of Kato and Labrousse  opened the field to many researchers to work in this direction \cite{berkani,boasso,rwassa,drazin,mbekhta,zariouh-zguitti,rwassa1,rwassa2}. In particular,  Berkani \cite{berkani} showed that $T$ is B-Fredholm (resp., B-Weyl)  if and only if there exists  $(M,N) \in \mbox{Red}(T)$ such that  $T_{M}$ is Fredholm (resp., Weyl)  and $T_{N}$ is nilpotent.  On the other hand, it is well known   \cite{drazin} that $T$ is Drazin invertible  if and only if there exists  $(M,N) \in \mbox{Red}(T)$ such that  $T_{M}$ is invertible and    $T_{N}$ is nilpotent.\\
 If     the condition (ii) ``$T_{N}$ being  nilpotent''  mentioned   in the Kato's decomposition is  replaced   by ``$T_{N}$ is quasi-nilpotent'' (resp., ``$T_{N}$ is Riesz'', ``$T_{N}$ is meromorphic''), we find  the   pseudo-Fredholm \cite{mbekhta} (resp., generalized Kato-Riesz \cite{rwassa1}, generalized Kato-meromorphic  \cite{rwassa2}) decomposition.  By the same argument  the    pseudo B-Fredholm \cite{tajmouati1,zariouh-zguitti} (resp., generalized Drazin-Riesz  Fredholm  \cite{boasso,rwassa1}, generalized Drazin-meromorphic  Fredholm \cite{rwassa2}) decomposition obtained   with    substituting   the condition    ``$T_{N}$ being  nilpotent''   in  the B-Fredholm decomposition   by   ``$T_{N}$ is quasi-nilpotent'' (resp., ``$T_{N}$ is Riesz'', ``$T_{N}$ is meromorphic'').  In the same way as  the  generalization of  B-Fredholm  and Kato decompositions,   the Drazin decomposition  was generalized \cite{koliha,rwassa1,rwassa2}.

\medskip

  We summarize in the following definition the several   known decompositions.
\begin{dfn}\cite{aznay-ouahab-zariouh2,berkani,berkani-sarih,boasso,bouamama,rwassa,mbekhta,zariouh-zguitti,rwassa1,rwassa2}  $T$ is said to be\\
(i)    Kato-type of order $d$  {\rm[}resp., quasi  upper semi-B-Fredholm,    quasi  lower semi-B-Fredholm, quasi B-Fredholm, quasi upper semi-B-Weyl, quasi lower semi-B-Weyl, quasi semi-B-Weyl{\rm]} if there exists $(M, N) \in \mbox{Red}(T)$ such that  $T_{M}$ is semi-regular {\rm[}resp., an upper  semi-Fredholm, a lower semi-Fredholm,  a Fredholm, upper semi-Weyl, lower semi-Weyl, Weyl{\rm]}   and $T_{N}$ is  nilpotent of degree $d.$  We write  $(M,N)\in KD(T)$  if it is a Kato-type decomposition.\\
(ii)   Pseudo-Fredholm {\rm[}resp., upper pseudo semi-B-Fredholm, a lower pseudo semi-B-Fredholm, a pseudo B-Fredholm, upper pseudo semi-B-Weyl, a lower pseudo semi-B-Weyl, a pseudo B-Weyl,  left generalized Drazin invertible, right generalized Drazin invertible, generalized Drazin invertible{\rm]} operator if there exists $(M, N) \in \mbox{Red}(T)$ such that  $T_{M}$ is  semi-regular {\rm[}resp., an upper  semi-Fredholm, a lower semi-Fredholm,  a Fredholm, upper semi-Weyl, lower semi-Weyl, Weyl, bounded below, surjective, invertible{\rm]}   and $T_{N}$ is  quasi-nilpotent. We write  $(M,N)\in GKD(T)$  if it is a pseudo-Fredholm decomposition.\\
(iii) Generalized Kato-Riesz    {\rm[}resp., generalized Drazin-Riesz upper semi-Fredholm,  generalized Drazin-Riesz lower semi-Fredholm, generalized Drazin-Riesz  Fredholm, generalized Drazin-Riesz upper semi-Weyl, generalized Drazin-Riesz lower semi-Weyl, generalized Drazin-Riesz  Weyl, generalized Drazin-Riesz bounded below,   generalized Drazin-Riesz surjective, generalized Drazin-Riesz invertible{\rm]} if there exists $(M, N) \in \mbox{Red}(T)$ such that  $T_{M}$ is an upper  semi-Fredholm semi-regular {\rm[}resp., an upper  semi-Fredholm, a lower semi-Fredholm,  a Fredholm, upper semi-Weyl, lower semi-Weyl, Weyl, bounded below, surjective, invertible{\rm]}    and $T_{N}$ is   Riesz.\\
(iv) Generalized Kato-meromorphic    {\rm[}resp., generalized Drazin-meromorphic upper semi-Fredholm,  generalized Drazin-meromorphic lower semi-Fredholm, generalized Drazin-meromorphic  Fredholm, generalized Drazin-meromorphic upper semi-Weyl, generalized Drazin-meromorphic lower semi-Weyl, generalized Drazin-meromorphic  Weyl, generalized Drazin-meromorphic bounded below,   generalized Drazin-meromorphic surjective, generalized Drazin-meromorphic invertible{\rm]} if there exists $(M, N) \in \mbox{Red}(T)$ such that  $T_{M}$ is  semi-regular {\rm[}resp.,  upper  semi-Fredholm,  lower semi-Fredholm,  Fredholm, upper semi-Weyl, lower semi-Weyl, Weyl, bounded below, surjective, invertible{\rm]}   and $T_{N}$ is   meromorphic.
\end{dfn}
 As  a continuation of the previous studies mentioned above,  we define  new classes of operators, one of these classes   named $g_{z}$-Kato (see Definition \ref{dfn1.3}) which  generalizes  the   generalized Kato-meromorphic operators class. We prove  that the $g_{z}$-Kato spectrum $\sigma_{g_{z}K}(T)$ is compact,   $\sigma_{g_{z}K}(T)\subset \mbox{acc}\,\sigma_{pf}(T).$   Moreover, we show in Proposition \ref{prop1.6}  that  if $T$ is    $g_{z}$-Kato then  $\alpha(T_{M}),$ $\beta(T_{M}),$ $p(T_{M})$ and $q(T_{M})$ are  independent of  the decomposition  $(M,N) \in g_{z}KD(T).$ The other class named $g_{z}$-invertible  generalizes the class of generalized Drazin invertible operators introduced by Koliha.   We prove  that   $T$ is  $g_{z}$-invertible  if and only if $0\not\in \mbox{acc}\,(\mbox{acc}\,\sigma(T))$ and we prove   in turn that this  is equivalent  to say that there exists  a Drazin invertible operator $S$  such that $TS = ST,$ $STS = S$ and  $T^{2}S - T$ is zeroloid.  These  characterizations  are analogous  to the one proved by Koliha \cite{koliha} which established that $T$ is   generalized Drazin invertible operator    if and only if $0\not\in \mbox{acc}\,\sigma(T)$ if and only if there exists    an  operator $S$  such that $T S = ST,$ $STS = S$ and  $T^{2}S - T$ is quasi-nilpotent.  As applications, using the   new spectra studied in present work  and the concept of the Weak SVEP introduced   at the end  of  this paper, we  give new  characterizations  of Browder-type theorems   (Theorem \ref{thm.j1} and Proposition \ref{propj.3}).

\medskip

The  next  list summarizes some notations and symbols that we will need later.
$$
    \begin{array}{ll}
  \noindent r(T) &:   \text{ the   spectral radius of  } T\\
\noindent \mbox{iso}\,A  &: \text{  isolated points of  a complex subset   } A\\
\noindent \mbox{acc}\,A &: \text{  accumulation points of a complex  subset } A\\
 \noindent \overline{A} &: \text{  the closure of a   complex subset } A\\
 \noindent A^C &: \text{ the complementary of a complex subset } A\\
 \noindent B(\lambda, \epsilon) &: \text{ the  open ball of radius } \epsilon  \text{ centered at } \lambda \\
 \noindent D(\lambda, \epsilon) &: \text{ the  closed  ball of radius } \epsilon  \text{ centered at } \lambda\\
\noindent (B) &: \text{ the class of operators satisfying Browder's theorem }  \, \left(T\in(B) \text{ if }  \sigma_{w}(T)=\sigma_{b}(T)\right)\\
\noindent (B_{e}) &: \text{ the class of operators satisfying  essential Browder's theorem } \cite{aznay-ouahab-zariouh}\,\,\,   \left(T\in(B_{e}) \text{ if } \sigma_{e}(T)=\sigma_{b}(T)\right)\\
\noindent (aB) &: \text{ the class of operators satisfying a-Browder's theorem }  \, \left(T\in(aB) \text{ if }  \sigma_{uw}(T)=\sigma_{ub}(T)\right)\\
    \end{array}
$$
 \begin{tabular}{l|l}

 $\sigma(T)$:   spectrum of $T$ &  $\sigma_{pf}(T)$:  pseudo-Fredholm spectrum of $T$\\
 $\sigma_{a}(T)$:  approximate points spectrum of $T$ & $\sigma_{pbf}(T)$:  pseudo B-Fredholm spectrum of $T$\\
 $\sigma_{s}(T)$: surjective  spectrum of $T$ & $\sigma_{upbf}(T)$:  upper pseudo semi-B-Fredholm spectrum of $T$\\
 $\sigma_{se}(T)$:  semi-regular spectrum of $T$ &  $\sigma_{lpbf}(T)$:  lower  pseudo semi-B-Fredholm spectrum of $T$\\
 $\sigma_{e}(T)$:  essential  spectrum of $T$ &  $\sigma_{pbw}(T)$:   pseudo B-Weyl spectrum of $T$\\
 $\sigma_{uf}(T)$:  upper semi-Fredholm spectrum  of $T$ & $\sigma_{upbw}(T)$:  upper pseudo semi-B-Weyl spectrum of $T$\\
 $\sigma_{lf}(T)$: lower  semi-Fredholm spectrum  of $T$ & $\sigma_{lpbw}(T)$:  lower pseudo semi-B-Weyl spectrum of $T$\\
 $\sigma_{w}(T)$: Weyl spectrum of $T$ & $\sigma_{gd}(T)$:   generalized Drazin invertible  spectrum of $T$\\
 $\sigma_{uw}(T)$: upper semi-Weyl spectrum of $T$ &  $\sigma_{lgd}(T)$:   left generalized Drazin invertible  spectrum of $T$\\
 $ \sigma_{lw}(T)$:   lower  semi-Weyl  spectrum of $T$ &  $\sigma_{rgd}(T)$:   right generalized Drazin invertible  spectrum of $T$\\
$\sigma_{b}(T)$:  Browder spectrum of $T$ &   $\sigma_{d}(T)$:  Drazin spectrum of $T$\\
$\sigma_{bf}(T)$: B-Fredholm  spectrum of $T$ & $\sigma_{bw}(T)$: B-Weyl spectrum of $T$
\end{tabular}

\section{$g_{z}$-Kato decomposition}
We begin this section by the following definition of zeroloid operators.
\begin{dfn}\label{dfn1.1} We say that $T \in L(X)$ is an operator zeroloid  if $\mbox{acc}\,\sigma(T)\subset \{0\}.$
\end{dfn}
\vspace{0.002em}

The next  remark summarizes  some properties of zeroloid operators.
\begin{rema}\label{rema1.2}(i) A zeroloid operator has at most a countable spectrum.\\
 (ii)  It is well known that    $\mbox{acc}\,\sigma(T)\subset\sigma_{d}(T)$ for every   $T \in L(X).$  Thus  every meromorphic operator   is zeroloid. But  the  operator $I+Q$ shows that the converse is not true; where $I$ is the identity operator  and  $Q$ is  the quasi-nilpotent  operator defined on  the Hilbert space $\ell^{2}(\N)$ by $Q(x_{1}, x_{2},\dots)=(0, x_{1}, \frac{x_{2}}{2},\dots).$\\
(iii) $T$ is zeroloid if and only if $T^{n}$ is zeroloid for every  integer $n \geq 1.$\\
(iv) Let $(T, S) \in L(X)\times L(Y);$ where  $Y$ is also a Banach space. Then  $T\oplus S$ is zeroloid if and only if $T$  and  $S$ are zeroloid.\\
(v)  Here and elsewhere denote by  $\mbox{comm}(T):=\{S \in L(X): TS=ST\}.$ So  if $Q\in \mbox{comm}(T)$   is a  quasi-nilpotent  or a   power finite rank operator,  then $T$ is zeroloid  if and only if    $T+Q$ is zeroloid.
\end{rema}
 According to \cite{aznay-ouahab-zariouh}, the ascent $\tilde{p}(T)$ and the descent $\tilde{q}(T)$ [noted in \cite{aznay-ouahab-zariouh} by $p(T)$ and $q(T)$] of  a pseudo-Fredholm operator $T\in L(X)$ are defined respectively, by  $\tilde{p}(T)=p(T_{M})$ and  $\tilde{q}(T)=q(T_{M});$  where $M$ is any subspace which complemented by a subspace  $N$ such that $(M,N) \in GKD(T).$
\begin{prop}\label{propa1} Let $T \in L(X)$ be a pseudo-Fredholm operator.  The following statements are equivalent.\\
(a) $\tilde{p}(T)<\infty;$\\
(b) $T$ has the SVEP at $0;$\\
(c) $\mathcal{H}_{0}(T)\cap \mathcal{K}(T)=\{0\};$\\
(d)  $\mathcal{H}_{0}(T)$ is closed.\\
dually, the following are equivalent.\\
(e) $\tilde{q}(T)<\infty;$\\
(f) $T^{*}$ has the SVEP at $0;$\\
(g) $\mathcal{H}_{0}(T)+ \mathcal{K}(T)=X.$
\end{prop}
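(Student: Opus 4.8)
The plan is to transfer every condition to the semi-regular part of a generalized Kato decomposition. Fix $(M,N)\in GKD(T)$, so that $X=M\oplus N$, the operator $S:=T_{M}$ is semi-regular and $T_{N}$ is quasi-nilpotent. Two elementary computations drive everything. First, since $T_{N}$ is quasi-nilpotent one has $\|T_{N}^{n}x\|^{1/n}\to 0$ for every $x\in N$, so $\mathcal{H}_{0}(T_{N})=N$, while the geometric growth condition in the definition of the analytic core forces $\mathcal{K}(T_{N})=\{0\}$; moreover $\sigma(T_{N})=\{0\}$ shows that $T_{N}$ has SVEP at $0$, and dually $T_{N}^{*}$ is quasi-nilpotent and has SVEP at $0$. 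Second, for the direct sum $T=T_{M}\oplus T_{N}$ one has $\mathcal{H}_{0}(T)=\mathcal{H}_{0}(S)\oplus N$ and $\mathcal{K}(T)=\mathcal{K}(S)$, and by \cite[Theorem 2.15]{aiena} $T$ (resp.\ $T^{*}$) has SVEP at $0$ iff $S$ (resp.\ $S^{*}$) does. Since $\mathcal{K}(S)\subset M$ and $M\cap N=\{0\}$, these identities give $\mathcal{H}_{0}(T)\cap\mathcal{K}(T)=\mathcal{H}_{0}(S)\cap\mathcal{K}(S)$, $\mathcal{H}_{0}(T)+\mathcal{K}(T)=(\mathcal{H}_{0}(S)+\mathcal{K}(S))\oplus N$, and $\mathcal{H}_{0}(T)$ is closed iff $\mathcal{H}_{0}(S)$ is. Thus I expect (a)--(d) all to be equivalent to ``$S$ injective'' and (e)--(g) to ``$S$ surjective'', and it remains to prove these reformulations for the semi-regular operator $S$.

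The arithmetic equivalence (a)$\Leftrightarrow$(b) rests on the fact that a semi-regular operator has finite ascent only when it is injective: if $0\neq x\in\mathcal{N}(S)$ then semi-regularity gives $x\in\mathcal{R}(S^{n})$ for all $n$, producing vectors in $\mathcal{N}(S^{n+1})\setminus\mathcal{N}(S^{n})$ for every $n$, whence $p(S)=\infty$. So $\tilde{p}(T)=p(S)<\infty$ is equivalent to $p(S)=0$, i.e.\ $\alpha(S)=0$. On the other hand $S$, being semi-regular, has topological uniform descent, so implication $(A)$ of the Introduction becomes an equivalence at $0$: $S$ has SVEP at $0$ iff $p(S)<\infty$. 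Combining these with the reduction above yields (a)$\Leftrightarrow$(b), both being equivalent to injectivity of $S$.

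For the cycle (b)$\Rightarrow$(c)$\Rightarrow$(b) and (b)$\Rightarrow$(d)$\Rightarrow$(b) I would argue as follows. If $S$ is injective and semi-regular it is bounded below, so $\|S^{n}x\|^{1/n}$ stays bounded away from $0$ and hence $\mathcal{H}_{0}(S)=\{0\}$; this gives at once $\mathcal{H}_{0}(T)\cap\mathcal{K}(T)=\mathcal{H}_{0}(S)\cap\mathcal{K}(S)=\{0\}$ and $\mathcal{H}_{0}(T)=N$ closed, i.e.\ (c) and (d). For (c)$\Rightarrow$(b) I argue contrapositively: if $S$ is not injective then $\{0\}\neq\mathcal{N}(S)\subset\mathcal{H}_{0}(S)$ and, by Mbekhta's inclusion $\mathcal{N}(S)\subset\mathcal{K}(S)$ for semi-regular operators \cite{mbekhta}, one gets $\mathcal{N}(S)\subset\mathcal{H}_{0}(S)\cap\mathcal{K}(S)$, so (c) fails. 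Finally (d)$\Rightarrow$(b) uses the standard fact that closedness of $\mathcal{H}_{0}(T)$ forces SVEP of $T$ at $0$. This closes the block (a)--(d).

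The dual block (e)--(g) is handled symmetrically. The descent analogue of the ascent lemma gives that a semi-regular $S$ has finite descent only when $q(S)=0$, i.e.\ $S$ surjective; together with the topological-uniform-descent form of implication $(B)$ this yields (e)$\Leftrightarrow$(f), both equivalent to surjectivity of $S$. For (g), note that any surjective operator satisfies $\mathcal{K}(S)=X$: by the open mapping theorem there is $c>0$ so that each $x$ admits a backward orbit with $\|u_{n}\|\le c^{n}\|x\|$, placing $x$ in $\mathcal{K}(S)$ with the choice $\epsilon=c$; hence surjectivity of $S$ forces $\mathcal{H}_{0}(T)+\mathcal{K}(T)=X$. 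The reverse implication (g)$\Rightarrow$(e) is the genuinely delicate point, and I would obtain it by duality: using the annihilator identities $\mathcal{K}(S)^{\perp}=\mathcal{H}_{0}(S^{*})$ and $\mathcal{H}_{0}(S)^{\perp}=\mathcal{K}(S^{*})$ valid for semi-regular $S$, the equality $\mathcal{H}_{0}(S)+\mathcal{K}(S)=M$ annihilates to $\mathcal{H}_{0}(S^{*})\cap\mathcal{K}(S^{*})=\{0\}$, and the injective case applied to the semi-regular operator $S^{*}$ then gives $S^{*}$ injective, i.e.\ $S$ surjective. Securing these duality identities for $\mathcal{H}_{0}$ and $\mathcal{K}$ (and the matching closed-range duality $q(S)<\infty\Leftrightarrow p(S^{*})<\infty$) is the main obstacle; once they are in place, everything reduces to the two clean dichotomies ``semi-regular $+$ finite ascent $=$ injective'' and ``semi-regular $+$ finite descent $=$ surjective''.
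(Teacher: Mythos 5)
Your proposal is correct, and for the blocks (a)$\Leftrightarrow$(b) and (e)$\Leftrightarrow$(f) it is essentially the paper's own argument: fix $(M,N)\in GKD(T)$, use $\tilde{p}(T)=p(T_{M})$ and $\tilde{q}(T)=q(T_{M})$, the fact that semi-regular operators have topological uniform descent (so that implications (A) and (B) of the Introduction become equivalences), and the direct-sum behaviour of the SVEP. Where you genuinely diverge is on (b)$\Leftrightarrow$(c)$\Leftrightarrow$(d) and (f)$\Leftrightarrow$(g): the paper disposes of these by citing \cite[Theorem 2.79, Theorem 2.80]{aiena}, which state exactly these equivalences for operators admitting a GKD, whereas you rebuild them from the decomposition formulas $\mathcal{H}_{0}(T)=\mathcal{H}_{0}(T_{M})\oplus N$, $\mathcal{K}(T)=\mathcal{K}(T_{M})$ and the reduction of every condition to injectivity or surjectivity of the semi-regular part $S=T_{M}$. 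The facts you invoke are all standard (Mbekhta's $\mathcal{N}(S)\subset\mathcal{K}(S)$ for semi-regular $S$, ``$\mathcal{H}_{0}$ closed implies SVEP'', surjectivity implies $\mathcal{K}(S)=M$), so your route buys a self-contained proof that exposes the geometric content, at the cost of length. Two remarks on your flagged ``main obstacle''. First, $\mathcal{K}(S)^{\perp}=\mathcal{H}_{0}(S^{*})$ is in general only an inclusion $\mathcal{H}_{0}(S^{*})\subset\mathcal{K}(S)^{\perp}$, since the annihilator of $\bigcap_{n}\mathcal{R}(S^{n})$ is the weak-$*$ closure of $\bigcup_{n}\mathcal{N}(S^{*n})$, which may exceed the norm closure; but your argument only uses that inclusion together with the genuine equality $\mathcal{K}(S^{*})=\mathcal{H}_{0}(S)^{\perp}$, so it survives. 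Second, the duality detour is avoidable: for semi-regular $S$ one has $\mathcal{H}_{0}(S)=\overline{\bigcup_{n}\mathcal{N}(S^{n})}\subset\bigcap_{n}\mathcal{R}(S^{n})=\mathcal{K}(S)$, so (g) collapses to $\bigcap_{n}\mathcal{R}(S^{n})=M$, and since this set is contained in $\mathcal{R}(S)$ you get surjectivity of $S$ immediately.
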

\begin{proof}
(a) $\Longleftrightarrow$ (b)  Let $(M,N) \in GKD(T),$ then  $T_{M}$ is semi-regular and $T_{N}$ is quasi-nilpotent. As  $p(T_{M})=\tilde{p}(T)$ then by the implication (A)  above, we deduce that $\tilde{p}(T)<\infty$  if and only if   $T_{M}$ has the SVEP at 0. Hence $\tilde{p}(T)<\infty$ if and only if $T$ has the SVEP at 0. The equivalence (e) $\Longleftrightarrow$ (f) goes similarly. The equivalences  (b) $\Longleftrightarrow$ (c),  (c) $\Longleftrightarrow$ (d) and  (f) $\Longleftrightarrow$ (g)  are proved in  \cite[Theorem 2.79, Theorem 2.80]{aiena}.
\end{proof}
\begin{lem} Let $T \in L(X).$ The following are equivalent.\\
(i) $T$ is zeroloid;\\
(ii) $\sigma_{*}(T)\subset \{0\};$ where $\sigma_{*}\in \{\sigma_{pf},\sigma_{upbf}, \sigma_{lpbf},\sigma_{upbw}, \sigma_{lpbw}, \sigma_{lgd}, \sigma_{rgd}, \sigma_{pbf}, \sigma_{pbw}\}.$
\end{lem}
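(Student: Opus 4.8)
The plan is to wedge every spectrum appearing in (ii) between the pseudo-Fredholm spectrum and the generalized Drazin spectrum, and then to show that each of these two extreme conditions, $\sigma_{pf}(T)\subset\{0\}$ and $\sigma_{gd}(T)\subset\{0\}$, is by itself equivalent to $T$ being zeroloid. Thus I would first establish, for every $\sigma_{*}$ occurring in (ii), the sandwich $\sigma_{pf}(T)\subset\sigma_{*}(T)\subset\sigma_{gd}(T)$, and then prove the two endpoint equivalences; the intermediate spectra are then squeezed and the stated equivalences follow at once.

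For the right-hand inclusion, observe that an invertible operator is simultaneously bounded below, surjective, Fredholm, Weyl and upper/lower semi-Weyl; hence if $T-\lambda I$ is generalized Drazin invertible (a reduction with invertible part and quasi-nilpotent part) it also has every property listed in (ii), which read contrapositively gives $\sigma_{*}(T)\subset\sigma_{gd}(T)$. For the left-hand inclusion I must check that each property in the list forces pseudo-Fredholmness: if $(M,N)\in\mbox{Red}(T-\lambda I)$ with $(T-\lambda I)_{N}$ quasi-nilpotent and $(T-\lambda I)_{M}$ enjoying any one of the listed properties, then $(T-\lambda I)_{M}$ is either semi-regular (the cases bounded below and surjective reduce to this directly) or semi-Fredholm, and in the latter case Kato's theorem splits $M=M_{1}\oplus M_{2}$ with the restriction to $M_{1}$ semi-regular and the restriction to $M_{2}$ nilpotent; regrouping $M_{2}$ with $N$ and using that a direct sum of a nilpotent and a quasi-nilpotent operator is quasi-nilpotent exhibits $T-\lambda I$ as pseudo-Fredholm. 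This yields $\sigma_{pf}(T)\subset\sigma_{*}(T)$.

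The upper endpoint is immediate from Koliha's theorem \cite{koliha} recalled in the abstract: $T-\lambda I$ is generalized Drazin invertible exactly when $\lambda\notin\mbox{acc}\,\sigma(T)$, so $\sigma_{gd}(T)=\mbox{acc}\,\sigma(T)$ and hence $T$ is zeroloid if and only if $\sigma_{gd}(T)\subset\{0\}$. Combined with the right-hand inclusion this already settles the implication (i)$\Rightarrow(\sigma_{*}(T)\subset\{0\})$ for every $\sigma_{*}$ in the list.

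The genuinely delicate point, which I expect to be the main obstacle, is the reverse endpoint $\sigma_{pf}(T)\subset\{0\}\Rightarrow$ (i); via the left-hand inclusion this is what upgrades any hypothesis $\sigma_{*}(T)\subset\{0\}$ back to zeroloidness. I would work on the connected planar domain $\Omega:=\mathbb{C}\setminus\{0\}$, every point of which is now a pseudo-Fredholm point. The punctured-neighbourhood theorem for operators with a generalized Kato decomposition shows that $T-\mu I$ is semi-regular for all $\mu\neq\lambda$ sufficiently close to each $\lambda\in\Omega$ (the semi-regular summand perturbs to semi-regular, while the quasi-nilpotent summand becomes invertible off its spectrum $\{0\}$); consequently $\sigma_{se}(T)\cap\Omega$ is a discrete, relatively closed subset of $\Omega$. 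Deleting such a set from the domain $\Omega$ leaves it connected, and the resulting set meets $\{\lambda:|\lambda|>r(T)\}$, where $T-\lambda I$ is invertible. Invoking the classical constancy of $\lambda\mapsto\alpha(T-\lambda I)$ and $\lambda\mapsto\beta(T-\lambda I)$ on each connected component of the semi-regular resolvent set, both defect numbers must vanish throughout $\Omega\setminus\sigma_{se}(T)$, so $T-\lambda I$ is invertible there. Hence $\sigma(T)\cap\Omega\subset\sigma_{se}(T)\cap\Omega$ is discrete, i.e. $\mbox{acc}\,\sigma(T)\subset\{0\}$. The subtlety is exactly that the purely local punctured-neighbourhood information does not suffice: the argument closes only because pseudo-Fredholmness holds on all of $\Omega$, allowing invertibility to be propagated inward from $|\lambda|>r(T)$ through connectedness together with the constancy of $\alpha$ and $\beta$.
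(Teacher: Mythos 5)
Your proposal is correct, and for the key implication (ii) $\Rightarrow$ (i) it takes a genuinely different route from the paper's. The shared skeleton is the sandwich $\sigma_{pf}(T)\subset\sigma_{*}(T)\subset\sigma_{gd}(T)$ (which the paper uses implicitly), the identity $\sigma_{gd}(T)=\mbox{acc}\,\sigma(T)$ for (i) $\Rightarrow$ (ii), and propagation over the connected set $\mathbb{C}\setminus\{0\}$ from the region $|\lambda|>r(T)$; the difference lies in what gets propagated. The paper argues via local spectral theory: if $0\neq\lambda\in \mbox{acc}\,\sigma(T)$, then $T-\lambda I$ is pseudo-Fredholm but not generalized Drazin invertible, so $\tilde{p}(T-\lambda I)=\infty$ or $\tilde{q}(T-\lambda I)=\infty$; Proposition \ref{propa1} turns this into $\mathcal{H}_{0}(T-\lambda I)\cap\mathcal{K}(T-\lambda I)\neq\{0\}$ (or failure of the dual decomposition), and the constancy of $\overline{\mathcal{H}_{0}(T-\mu I)}\cap\mathcal{K}(T-\mu I)$ on the component $\Omega$ of $(\sigma_{pf}(T))^{C}$ (\cite[Corollary 4.3]{bouamama}) contradicts the value $\{0\}$ attained for $|\mu|>r(T)$. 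You instead stay inside classical Kato perturbation theory: pseudo-Fredholm points have punctured neighbourhoods of semi-regular points, so $\sigma_{se}(T)\cap(\mathbb{C}\setminus\{0\})$ is discrete and relatively closed; deleting it keeps the domain connected, and the constancy of $\alpha$ and $\beta$ on components of the semi-regular resolvent forces invertibility off this discrete set, whence $\mbox{acc}\,\sigma(T)\subset\{0\}$. Your route avoids the SVEP machinery of Proposition \ref{propa1} and the Bouamama citation, at the cost of two standard extra ingredients: that a planar domain minus a countable closed discrete set remains connected, and that nullity and deficiency are constant on components of the semi-regular resolvent (the paper invokes the local version of this, via Grabiner, in Theorem \ref{thm1.5}). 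The paper's argument, by contrast, reuses exactly the apparatus it has already built and serves as the template for its later $g_{z}$-analogues, while yours is the more elementary and self-contained of the two.
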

\begin{proof}  (i) $\Longrightarrow$ (ii)  Obvious, since $\sigma_{gd}(T)=\mbox{acc}\,\sigma(T).$\\
 (ii) $\Longrightarrow$ (i) Assume that $\sigma_{*}(T)\subset \{0\}.$ Then $\C\setminus\{0\}\subset \Omega;$ where $\Omega$  is the  component of $(\sigma_{pf}(T))^{C}.$  Suppose that there exists $\lambda \in  \C\setminus\{0\}$   such that $\lambda \in \mbox{acc}\,\sigma(T),$  then $\lambda \notin \sigma_{*}(T).$ Hence $\tilde{p}(T-\lambda I)=\infty$ or $\tilde{q}(T-\lambda I)=\infty.$  Suppose that   $\tilde{p}(T-\lambda I)=\infty,$ as $T-\lambda I$ is  pseudo-Fredholm, from Proposition \ref{propa1} we have  $\mathcal{H}_{0}(T-\lambda I)\cap \mathcal{K}(T-\lambda I)\neq\{0\}.$ And  from \cite[Corollary 4.3]{bouamama}   we obtain   $\overline{\mathcal{H}_{0}(T-\lambda I)}\cap \mathcal{K}(T-\lambda I)=\overline{\mathcal{H}_{0}(T-\mu I)}\cap \mathcal{K}(T-\mu I)$ for every  $\mu\in \Omega.$ If there exists $\mu\in \Omega\setminus\{0\}$ such that  $\mathcal{H}_{0}(T-\mu I)\cap \mathcal{K}(T-\mu I)=\{0\}$ then $\mathcal{H}_{0}(T-\mu I)$ is closed.  So $\overline{\mathcal{H}_{0}(T-\lambda I)}\cap \mathcal{K}(T-\lambda I)=\{0\}$ and this is a contradiction.  The case of $\tilde{q}(T-\lambda I)=\infty$ goes similarly. Hence $T$ is zeroloid.
\end{proof}
\begin{prop}\label{propreszeroloid} $T \in L(X)$  is zeroloid if and only if $T_{M}$ and $T^{*}_{M^{\perp}}$  are zeroloid; where $M$ is any closed  $T$-invariant subspace.
\end{prop}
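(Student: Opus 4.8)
The plan is to pass to the quotient operator and reduce everything to a few elementary spectral inclusions. Write $\pi\colon X\to X/M$ for the canonical surjection and let $\hat T\in L(X/M)$ be the operator induced by $T$ on the quotient, which is well defined since $M$ is $T$-invariant. The first step is to record the identification $\sigma(T^{*}_{M^{\perp}})=\sigma(\hat T)$. Indeed, the map $\phi\mapsto\phi\circ\pi$ is an isometric isomorphism of $(X/M)^{*}$ onto $M^{\perp}$, and a direct computation shows it intertwines $\hat T^{*}$ with $T^{*}_{M^{\perp}}$; hence these operators are similar, and since an operator and its adjoint share the same spectrum we get $\sigma(T^{*}_{M^{\perp}})=\sigma(\hat T^{*})=\sigma(\hat T)$. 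So the proposition is equivalent to: $T$ is zeroloid if and only if both $T_{M}$ and $\hat T$ are zeroloid.

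The second step is to establish three inclusions for the ``upper triangular'' data $(T_{M},\hat T)$. The inclusion $\sigma(T)\subseteq\sigma(T_{M})\cup\sigma(\hat T)$ follows from the standard exact-sequence argument (if $T_{M}-\lambda$ and $\hat T-\lambda$ are invertible then so is $T-\lambda$, by lifting injectivity and surjectivity through $0\to M\to X\to X/M\to 0$). Next, $\sigma_{a}(T_{M})\subseteq\sigma_{a}(T)$, because an approximate eigenvector in $M$ for $T_{M}$ is one for $T$; dually $\sigma_{s}(\hat T)\subseteq\sigma_{s}(T)$, since the quotient of a surjection is a surjection. Combining these with the classical facts $\partial\sigma(A)\subseteq\sigma_{a}(A)$ and $\partial\sigma(A)\subseteq\sigma_{s}(A)$ applied to $A=T_{M}$ and $A=\hat T$ respectively yields $\partial\sigma(T_{M})\subseteq\sigma(T)$ and $\partial\sigma(\hat T)\subseteq\sigma(T)$.

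The implication ``$T_{M}$ and $T^{*}_{M^{\perp}}$ zeroloid $\Longrightarrow T$ zeroloid'' is then immediate: using $\sigma(T)\subseteq\sigma(T_{M})\cup\sigma(\hat T)$ together with the additivity and monotonicity of $\mbox{acc}$, one obtains $\mbox{acc}\,\sigma(T)\subseteq\mbox{acc}\,\sigma(T_{M})\cup\mbox{acc}\,\sigma(\hat T)\subseteq\{0\}$. For the converse, assume $T$ is zeroloid. By Remark \ref{rema1.2}(i) the spectrum $\sigma(T)$ is countable, so by the boundary inclusions above $\partial\sigma(T_{M})$ and $\partial\sigma(\hat T)$ are countable. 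A compact set $K\subseteq\mathbb{C}$ with countable boundary must have empty interior: $\mathbb{C}\setminus\partial K$ is connected (the plane minus a countable set is connected) and equals the disjoint union of the two open sets $\mbox{int}\,K$ and $\mathbb{C}\setminus K$, forcing $\mbox{int}\,K=\emptyset$ since $\mathbb{C}\setminus K\neq\emptyset$; hence $K=\partial K$. Applying this to $K=\sigma(T_{M})$ and $K=\sigma(\hat T)$ gives $\sigma(T_{M})=\partial\sigma(T_{M})\subseteq\sigma(T)$ and $\sigma(\hat T)=\partial\sigma(\hat T)\subseteq\sigma(T)$, whence $\mbox{acc}\,\sigma(T_{M})\subseteq\mbox{acc}\,\sigma(T)\subseteq\{0\}$ and likewise for $\hat T$; by the first step this says $T_{M}$ and $T^{*}_{M^{\perp}}$ are zeroloid.

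I expect the main obstacle to be the forward implication, and specifically the possibility that $\sigma(T_{M})$ is strictly larger than $\sigma(T)$ (restriction to an invariant subspace can ``fill in holes'' of the spectrum). The boundary inclusions alone do not bound $\sigma(T_{M})$ by $\sigma(T)$; the decisive point is the topological observation that a compact set whose boundary lands in the countable set $\sigma(T)$ can have no interior, which collapses $\sigma(T_{M})$ onto its boundary and hence into $\sigma(T)$. The remaining items—the intertwining that gives $\sigma(T^{*}_{M^{\perp}})=\sigma(\hat T)$ and the two containments $\sigma_{a}(T_{M})\subseteq\sigma_{a}(T)$, $\sigma_{s}(\hat T)\subseteq\sigma_{s}(T)$—are routine and require only care with the exact sequence $0\to M\to X\to X/M\to 0$, with no completeness or complementation hypothesis on $M$ beyond its being closed and $T$-invariant.
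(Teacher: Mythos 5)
Your proof is correct, but it follows a genuinely different route from the paper's. The paper handles the forward direction by noting that a zeroloid operator has countable spectrum, hence connected resolvent $(\sigma(T))^{C}$, and then invoking a cited result of Djordjevi\'{c} and Duggal to get the exact equality $\sigma(T)=\sigma(T_{M})\cup\sigma(T^{*}_{M^{\perp}})$; the converse is, exactly as in your argument, the always-valid inclusion $\sigma(T)\subset\sigma(T_{M})\cup\sigma(T^{*}_{M^{\perp}})$ combined with $\mbox{acc}\,(A\cup B)=\mbox{acc}\,A\cup\mbox{acc}\,B.$ You replace the citation by a self-contained argument: the identification $\sigma(T^{*}_{M^{\perp}})=\sigma(\hat{T})$ through the canonical isomorphism $(X/M)^{*}\cong M^{\perp},$ the boundary inclusions $\partial\sigma(T_{M})\subseteq\sigma_{a}(T_{M})\subseteq\sigma_{a}(T)\subseteq\sigma(T)$ and $\partial\sigma(\hat{T})\subseteq\sigma_{s}(\hat{T})\subseteq\sigma_{s}(T)\subseteq\sigma(T),$ and the topological observation that a compact planar set whose boundary is countable has empty interior (the plane minus a countable set is connected) and therefore coincides with its boundary. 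This is precisely the mechanism hidden inside the cited result: passing to an invariant subspace or a quotient can enlarge the spectrum only by filling holes, and a countable spectrum leaves no holes to fill, since a hole is a nonempty open set. What the paper's proof buys is brevity, at the cost of leaning on the literature; what yours buys is self-containedness and a clear explanation of why countability of $\sigma(T)$ is the decisive hypothesis. All the individual steps you use --- the intertwining of $\hat{T}^{*}$ with $T^{*}_{M^{\perp}},$ the exact-sequence inclusion $\sigma(T)\subseteq\sigma(T_{M})\cup\sigma(\hat{T}),$ the classical facts $\partial\sigma(A)\subseteq\sigma_{a}(A)$ and $\partial\sigma(A)\subseteq\sigma_{s}(A),$ and the connectedness of the plane minus a countable set --- are standard and correctly applied, and, like the paper's proof, your argument requires no complementation hypothesis on $M.$
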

\begin{proof} If     $T$ is zeroloid then its   resolvent $(\sigma(T))^{C}$ is  connected. And from    \cite[Proposition 2.10]{djordjevec-duggal},  we obtain that   $\sigma(T)=\sigma(T_{M})\cup\sigma(T^{*}_{M^{\perp}}).$ Thus  $T_{M}$ and $T^{*}_{M^{\perp}}$  are zeroloid.  Conversely, if   $T_{M}$ and $T^{*}_{M^{\perp}}$  are zeroloid   then   $T$  is zeroloid, since the inclusion $\sigma(T)\subset\sigma(T_{M})\cup\sigma(T^{*}_{M^{\perp}})$ is always true.
\end{proof}

\begin{dfn}\label{dfn1.3} Let $T \in L(X).$  A pair of subspaces $(M,N) \in \mbox{Red}(T)$ is   a generalized Kato zeroloid  decomposition  associated to $T$ ($(M,N) \in g_{z}KD(T)$ for brevity)  if   $T_{M}$ is  semi-regular   and $T_{N}$ is  zeroloid.  If such a pair  exists,    we say that  $T$ is   a $g_{z}$-Kato operator.
\end{dfn}

\begin{ex}\label{ex1.4}   \noindent (i)  A zeroloid operator  and a semi-regular operator   are $g_{z}$-Kato.~~

\noindent (ii) A  generalized Kato-meromorphic   operator  is  $g_{z}$-Kato. But the converse is not true, see Example \ref{exs} below.
\end{ex}
Our  next result gives a punctured neighborhood theorem for $g_{z}$-Kato operators. Recall that  the reduced minimal modulus $\gamma(T)$ of an operator $T$ is defined by $\gamma(T):=\underset{x\notin \mathcal{N}(T)}{\mbox{inf}}\,\frac{\|Tx\|}{d(x,\mathcal{N}(T))};$ where $d(x,\mathcal{N}(T))$ is the distance between $x$ and $\mathcal{N}(T).$
\begin{thm}\label{thm1.5} Let   $T \in L(X)$ be  a $g_{z}$-Kato operator.       For every  $(M,N) \in g_{z}KD(T),$ there   exists $\epsilon>0$ such that  for all  $\lambda \in B(0, \epsilon)\setminus\{0\}$ we have\\
(i) $T-\lambda I$ is  pseudo-Fredholm.\\
(ii) $\alpha(T_{M})=\mbox{dim}\,\mathcal{N}(T-\lambda I)\cap \mathcal{K}(T-\lambda I)\leq \alpha(T-\lambda I).$\\
(iii)   $\beta(T_{M})=\mbox{codim}\,\R(T-\lambda I)+\mathcal{H}_{0}(T-\lambda I)\leq \beta(T-\lambda I).$
\end{thm}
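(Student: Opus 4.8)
The plan is to exploit the reducing decomposition $X=M\oplus N$ supplied by $(M,N)\in g_{z}KD(T)$ (Definition \ref{dfn1.3}), under which $T=T_{M}\oplus T_{N}$ and hence $T-\lambda I=(T_{M}-\lambda I)\oplus(T_{N}-\lambda I)$ for every $\lambda$, and to analyse the two summands with completely different tools: the semi-regular part $T_{M}-\lambda I$ through the classical perturbation theory of semi-regular operators, and the zeroloid part $T_{N}-\lambda I$ through Koliha's characterization of generalized Drazin invertibility. The three conclusions are then obtained by recombining the summands, using that $\mathcal{N}$, $\R$, $\mathcal{K}$ and $\mathcal{H}_{0}$ all split along the reducing pair $(M,N)$.

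For the semi-regular summand I would first note that the semi-regular resolvent set is open, so there is $\epsilon>0$ with $T_{M}-\lambda I$ semi-regular for all $|\lambda|<\epsilon$, and that the classical punctured-neighborhood theorem for semi-regular operators \cite{aiena,mbekhta} keeps $\alpha$ and $\beta$ constant through the whole disc, so that $\alpha(T_{M}-\lambda I)=\alpha(T_{M})$ and $\beta(T_{M}-\lambda I)=\beta(T_{M})$. For any semi-regular $S$ the semi-regularity condition $\mathcal{N}(S)\subset\bigcap_{n}\R(S^{n})=\mathcal{K}(S)$ gives $\mathcal{N}(S)\cap\mathcal{K}(S)=\mathcal{N}(S)$; dually, the semi-regular structure forces $\mathcal{H}_{0}(S)\subset\R(S)$, so that $\R(S)+\mathcal{H}_{0}(S)=\R(S)$. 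Applied to $S=T_{M}-\lambda I$ these record the contribution of the $M$-block as exactly $\alpha(T_{M})$ and $\beta(T_{M})$.

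For the zeroloid summand I would use that $T_{N}$ is zeroloid (Definition \ref{dfn1.1}), so $\mbox{acc}\,\sigma(T_{N})\subset\{0\}$ and therefore $0\notin\mbox{acc}\,\sigma(T_{N}-\lambda I)$ for every $\lambda\neq 0$; by Koliha's theorem \cite{koliha} the operator $T_{N}-\lambda I$ is then generalized Drazin invertible, giving a reduction $N=N_{1}\oplus N_{2}$ with $(T_{N}-\lambda I)_{N_{1}}$ invertible and $(T_{N}-\lambda I)_{N_{2}}$ quasi-nilpotent. Here $\mathcal{K}(T_{N}-\lambda I)=N_{1}$ and $\mathcal{H}_{0}(T_{N}-\lambda I)=N_{2}$, whence $\mathcal{N}(T_{N}-\lambda I)\cap\mathcal{K}(T_{N}-\lambda I)=\{0\}$ and $\R(T_{N}-\lambda I)+\mathcal{H}_{0}(T_{N}-\lambda I)=N$. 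Assembling: on $X=(M\oplus N_{1})\oplus N_{2}$ the operator $T-\lambda I$ is semi-regular on $M\oplus N_{1}$ (a direct sum of a semi-regular and an invertible operator is semi-regular) and quasi-nilpotent on $N_{2}$, i.e. a pseudo-Fredholm $(GKD)$ decomposition, proving (i). Splitting $\mathcal{N}(T-\lambda I)\cap\mathcal{K}(T-\lambda I)$ and $\R(T-\lambda I)+\mathcal{H}_{0}(T-\lambda I)$ along $(M,N)$ then gives $\mbox{dim}\,(\mathcal{N}(T-\lambda I)\cap\mathcal{K}(T-\lambda I))=\alpha(T_{M})$ and $\mbox{codim}\,(\R(T-\lambda I)+\mathcal{H}_{0}(T-\lambda I))=\beta(T_{M})$, while $\alpha(T-\lambda I)=\alpha(T_{M})+\alpha(T_{N}-\lambda I)\geq\alpha(T_{M})$ and dually for $\beta$, yielding (ii) and (iii).

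The hard part will be the bookkeeping on the semi-regular block rather than the zeroloid one. One must be certain that $\alpha$ and $\beta$ do not jump as $\lambda\to 0$ (so the punctured-disc values are genuinely $\alpha(T_{M})$ and $\beta(T_{M})$), and that the two inclusions $\mathcal{N}(S)\subset\mathcal{K}(S)$ and $\mathcal{H}_{0}(S)\subset\R(S)$ really hold for semi-regular $S$. The first is immediate from $\mathcal{K}(S)=\bigcap_{n}\R(S^{n})$ together with $\mathcal{N}(S)\subset\bigcap_{n}\R(S^{n})$; the second is the genuinely delicate point, being the dual statement, and is best obtained either from Mbekhta's duality between $\mathcal{K}$ and $\mathcal{H}_{0}$ under passage to $S^{*}$, or by reducing (iii) to (ii) applied to the adjoint (note a naive attempt via $\mathcal{H}_{0}(S)\subset\overline{\mathcal{N}(S^{\infty})}$ fails in general, as the Volterra operator shows, so the semi-regular hypothesis must be used in full). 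Once these facts and the splitting of $\mathcal{K}$ and $\mathcal{H}_{0}$ along the reducing pair are in hand, the stability of $0\notin\mbox{acc}\,\sigma$ under the translation $T_{N}\mapsto T_{N}-\lambda I$ makes the remaining steps routine.
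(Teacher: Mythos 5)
Your proposal is correct and follows essentially the same route as the paper's proof: decompose $T-\lambda I$ along the given pair $(M,N)$, use the stability theorem for semi-regular operators (the paper takes $\epsilon=\gamma(T_{M})$ and cites \cite[Theorem 4.7]{grabiner}) to keep $\alpha$ and $\beta$ constant on the $M$-block, and observe that for $\lambda\neq 0$ the zeroloid block $T_{N}-\lambda I$ is generalized Drazin invertible, hence pseudo-Fredholm with $\mathcal{N}(T_{N}-\lambda I)\cap\mathcal{K}(T_{N}-\lambda I)=\{0\}$ and $\R(T_{N}-\lambda I)+\mathcal{H}_{0}(T_{N}-\lambda I)=N$ --- a fact the paper cites from \cite{aznay-ouahab-zariouh} and you rederive directly from Koliha's theorem \cite{koliha}. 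The supporting details you make explicit (the inclusions $\mathcal{N}(S)\subset\mathcal{K}(S)$ and $\mathcal{H}_{0}(S)\subset\R(S)$ for semi-regular $S$, and the splitting of $\mathcal{N}$, $\R$, $\mathcal{K}$, $\mathcal{H}_{0}$ along the reducing pair) are precisely what the paper's concluding ``Hence'' compresses, so the two arguments coincide in substance.
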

\begin{proof}  Let $\epsilon=\gamma(T_{M})>0$  and let $\lambda \in B(0, \epsilon)\setminus\{0\}.$  Then   \cite[Theorem 4.7]{grabiner} implies that the operator  $T_{M}-\lambda I $ is  semi-regular, $\alpha(T_{M})=\alpha(T_{M}-\lambda I)$ and $\beta(T_{M})=\beta(T_{M}-\lambda I).$  On the other hand, as  $T_{N}$ is  zeroloid then from  \cite{aznay-ouahab-zariouh}, $T_{N}-\lambda  I$  is  pseudo-Fredholm  with  $\mathcal{N}(T_{N}-\lambda I)\cap \mathcal{K}(T_{N}-\lambda I)=\{0\}$ and $N=\R(T_{N}-\lambda I)+\mathcal{H}_{0}(T_{N}-\lambda I).$     Hence  $T-\lambda  I$ is  pseudo-Fredholm, $\alpha(T_{M})=\mbox{dim}\,\mathcal{N}(T-\lambda I)\cap \mathcal{K}(T-\lambda I)$ and $\beta(T_{M})=\mbox{codim}\,\R(T-\lambda I)+\mathcal{H}_{0}(T-\lambda I).$
\end{proof}
Since every pseudo-Fredholm operator is $g_{z}$-Kato,   from Theorem \ref{thm1.5} we immediately obtain  the following corollary. Hereafter, we denote by $\sigma_{g_{z}K}(T)=\{\lambda \in \C: T-\lambda I \text{ is not }g_{z}\text{-Kato} \text{ operator}\}$ the  $g_{z}$-Kato spectrum.
\begin{cor}   The $g_{z}$-Kato spectrum $\sigma_{g_{z}K}(T)$ of an operator $T \in L(X)$  is a compact subset of $\C.$
\end{cor}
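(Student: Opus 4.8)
The plan is to show that $\sigma_{g_{z}K}(T)$ is both bounded and closed. For boundedness I would observe that whenever $\lambda \notin \sigma(T)$ the operator $T-\lambda I$ is invertible, hence bounded below and surjective, hence semi-regular; by Example \ref{ex1.4}(i) a semi-regular operator is $g_{z}$-Kato (take $(M,N)=(X,\{0\})$). Consequently $\lambda \notin \sigma_{g_{z}K}(T)$, which yields the inclusion $\sigma_{g_{z}K}(T)\subset \sigma(T)$; since $\sigma(T)$ is bounded, so is $\sigma_{g_{z}K}(T)$.

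The substantive part is to prove that the complement $(\sigma_{g_{z}K}(T))^{C}$ is open, and here the analytic work has already been done in Theorem \ref{thm1.5}. Fix $\lambda_{0}\in (\sigma_{g_{z}K}(T))^{C}$, so that $T-\lambda_{0}I$ is $g_{z}$-Kato, and choose a decomposition $(M,N)\in g_{z}KD(T-\lambda_{0}I)$. Applying Theorem \ref{thm1.5} to the $g_{z}$-Kato operator $T-\lambda_{0}I$ produces an $\epsilon>0$ such that for every $\mu\in B(0,\epsilon)\setminus\{0\}$ the operator $(T-\lambda_{0}I)-\mu I = T-(\lambda_{0}+\mu)I$ is pseudo-Fredholm.

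Next I would invoke the fact recorded just before the statement, that every pseudo-Fredholm operator is $g_{z}$-Kato: indeed if $(M',N')\in GKD(S)$ then $S_{N'}$ is quasi-nilpotent, so $\sigma(S_{N'})=\{0\}$ gives $\mathrm{acc}\,\sigma(S_{N'})\subset\{0\}$, whence $S_{N'}$ is zeroloid and $(M',N')\in g_{z}KD(S)$. Therefore $T-(\lambda_{0}+\mu)I$ is $g_{z}$-Kato for every $\mu \in B(0,\epsilon)\setminus\{0\}$; together with the hypothesis that $T-\lambda_{0}I$ (the case $\mu=0$) is $g_{z}$-Kato, this shows that $T-\nu I$ is $g_{z}$-Kato for all $\nu\in B(\lambda_{0},\epsilon)$. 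Hence $B(\lambda_{0},\epsilon)\cap \sigma_{g_{z}K}(T)=\emptyset$, so $\lambda_{0}$ is interior to the complement. As $\lambda_{0}$ was arbitrary, $\sigma_{g_{z}K}(T)$ is closed, and combined with boundedness it is compact.

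I would expect no serious obstacle here, since the entire difficulty is already encapsulated in the punctured-neighborhood Theorem \ref{thm1.5}. The only points requiring care are the reindexing $\mu\mapsto \lambda_{0}+\mu$, so that a punctured neighborhood of $0$ for $T-\lambda_{0}I$ becomes a punctured neighborhood of $\lambda_{0}$ for $T$, and the inclusion of the center $\mu=0$, which is supplied separately by the hypothesis on $\lambda_{0}$ rather than by Theorem \ref{thm1.5}.
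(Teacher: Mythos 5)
Your proof is correct and follows the same route as the paper, which deduces the corollary directly from Theorem \ref{thm1.5} together with the observation that every pseudo-Fredholm (in particular every invertible, via semi-regular) operator is $g_{z}$-Kato. You have merely spelled out the reindexing and boundedness details that the paper leaves implicit in its ``we immediately obtain'' remark.
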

\begin{prop}\label{prop1.6}   If  $T \in L(X)$ is  a $g_{z}$-Kato operator then $\alpha(T_{M}),$  $\beta(T_{M}),$ $p(T_{M})$ and   $q(T_{M})$ are independent of the choice of the generalized Kato zeroloid decomposition $(M,N)\in g_{z}KD(T).$
\end{prop}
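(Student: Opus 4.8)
The plan is to reduce the whole statement to Theorem~\ref{thm1.5} together with a dichotomy for the ascent and descent of a semi-regular operator. The point is that Theorem~\ref{thm1.5} already expresses $\alpha(T_{M})$ and $\beta(T_{M})$ through quantities attached to $T-\lambda I$ that make no reference to the decomposition, while $p(T_{M})$ and $q(T_{M})$ will turn out to be completely determined by $\alpha(T_{M})$ and $\beta(T_{M})$.

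First I would settle $\alpha(T_{M})$ and $\beta(T_{M})$. Fix two decompositions $(M_{1},N_{1}),(M_{2},N_{2})\in g_{z}KD(T)$ and let $\epsilon_{1},\epsilon_{2}>0$ be the radii furnished by Theorem~\ref{thm1.5}. For any $\lambda$ with $0<|\lambda|<\min(\epsilon_{1},\epsilon_{2})$, parts (ii) and (iii) of that theorem give, for $i=1,2$,
$$\alpha(T_{M_{i}})=\mbox{dim}\,(\mathcal{N}(T-\lambda I)\cap\mathcal{K}(T-\lambda I)),\qquad \beta(T_{M_{i}})=\mbox{codim}\,(\R(T-\lambda I)+\mathcal{H}_{0}(T-\lambda I)).$$
Since the right-hand sides depend only on $T$ and $\lambda$, and not on the chosen pair, we immediately obtain $\alpha(T_{M_{1}})=\alpha(T_{M_{2}})$ and $\beta(T_{M_{1}})=\beta(T_{M_{2}})$. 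Hence $\alpha(T_{M})$ and $\beta(T_{M})$ are independent of $(M,N)\in g_{z}KD(T)$.

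Then I would handle $p(T_{M})$ and $q(T_{M})$ by proving that for a semi-regular operator $S$ the ascent and the descent can only take the values $0$ or $\infty$, each being governed by $\alpha(S)$, respectively $\beta(S)$. For the ascent: if $p(S)=p<\infty$ and $x\in\mathcal{N}(S)$, then semi-regularity ($\mbox{dis}(S)=0$) gives $\mathcal{N}(S)\subset\R(S^{p})$, so $x=S^{p}y$; from $Sx=0$ we get $y\in\mathcal{N}(S^{p+1})=\mathcal{N}(S^{p})$, whence $x=S^{p}y=0$. Thus $p(S)<\infty$ forces $\alpha(S)=0$, so $p(S)\in\{0,\infty\}$ with $p(S)=0\iff\alpha(S)=0$. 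Dually, if $q(S)=q<\infty$ then for arbitrary $x\in X$ we have $S^{q}x\in\R(S^{q})=\R(S^{q+1})$, say $S^{q}x=S^{q+1}z$; hence $x-Sz\in\mathcal{N}(S^{q})\subset\R(S)$ (again by semi-regularity), so $x\in\R(S)$. Thus $q(S)<\infty$ forces $\beta(S)=0$, so $q(S)\in\{0,\infty\}$ with $q(S)=0\iff\beta(S)=0$.

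Applying this to $S=T_{M}$, which is semi-regular by the very definition of $g_{z}KD(T)$, finishes the argument: $p(T_{M})$ is $0$ or $\infty$ according as $\alpha(T_{M})=0$ or not, and $q(T_{M})$ is $0$ or $\infty$ according as $\beta(T_{M})=0$ or not; since $\alpha(T_{M})$ and $\beta(T_{M})$ have just been shown to be independent of the decomposition, so are $p(T_{M})$ and $q(T_{M})$. I expect the main obstacle to be the semi-regular dichotomy, and more precisely the justification of the two inclusions $\mathcal{N}(S)\subset\R(S^{p})$ and $\mathcal{N}(S^{q})\subset\R(S)$ from $\mbox{dis}(S)=0$ with $\R(S)$ closed; these are the two standard equivalent characterizations of semi-regularity, which I would invoke explicitly so that the two short cancellation arguments above go through.
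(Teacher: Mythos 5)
Your proof is correct, and its second half takes a genuinely different route from the paper's. For the equality of $\alpha(T_{M})$ and $\beta(T_{M})$ you and the paper do exactly the same thing: apply Theorem \ref{thm1.5} to two decompositions over a common punctured disc, so that both quantities are read off from data attached to $T-\lambda I$ alone. The difference lies in $p(T_{M})$ and $q(T_{M})$. The paper observes that $(M_{1},N_{1}),(M_{2},N_{2})\in g_{z}KD(T^{n})$ for every $n\geq 1$ and applies Theorem \ref{thm1.5} to each power $T^{n},$ obtaining $\alpha(T^{n}_{M_{1}})=\alpha(T^{n}_{M_{2}})$ and $\beta(T^{n}_{M_{1}})=\beta(T^{n}_{M_{2}})$ for all $n,$ and from this equality of the full sequences concludes that the ascents and descents agree. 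You instead stay with $n=1$ and recover $p(T_{M})$ and $q(T_{M})$ from $\alpha(T_{M})$ and $\beta(T_{M})$ via the dichotomy for semi-regular operators ($p(S),q(S)\in\{0,\infty\},$ with $p(S)=0$ if and only if $\alpha(S)=0$ and $q(S)=0$ if and only if $\beta(S)=0$), which you prove correctly from the standard inclusions $\mathcal{N}(S^{m})\subset\R(S^{n})$ valid for semi-regular $S.$ Your route is arguably the tighter one: the paper's final inference ``equal nullities and deficiencies of all powers imply equal ascent and descent'' is immediate only when those dimensions are finite (nested subspaces of equal finite dimension coincide), and in the infinite-dimensional case one needs precisely your dichotomy, or something equivalent, to close the argument. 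What the paper's version buys is brevity and uniformity --- everything is delegated to the punctured-neighborhood theorem, and the remark that one decomposition serves all powers of $T$ is useful elsewhere --- while your version is self-contained, needs only the case $n=1,$ and avoids the finite/infinite case distinction altogether.
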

\begin{proof}  Let $(M_{1},N_{1}),(M_{2},N_{2})  \in g_{z}KD(T)$   and  let  $n \in \N\setminus\{0\}.$  It is easily seen that $T^{n}$ is also   a $g_{z}$-Kato operator and   $(M_{1},N_{1}),(M_{2},N_{2}) \in g_{z}KD(T^{n}).$ We put $\epsilon_{n}:=\mbox{min}\{\gamma(T^{n}_{M_{1}}),\gamma(T^{n}_{M_{2}})\}.$ Let  $\lambda \in  B(0, \epsilon_{n}) \setminus\{0\},$     by   Theorem \ref{thm1.5}  we obtain      $\alpha(T^{n}_{M_{1}})=\alpha(T^{n}_{M_{2}})=\mbox{dim}\,\mathcal{N}(T^{n}-\lambda I)\cap \mathcal{K}(T^{n}-\lambda I)$  and $\beta(T^{n}_{M_{1}})=\beta(T^{n}_{M_{2}})=\mbox{codim}\,\R(T^{n}-\lambda I)+\mathcal{H}_{0}(T^{n}-\lambda I).$  Hence $p(T_{M_{1}})=p(T_{M_{2}})$ and $q(T_{M_{1}})=q(T_{M_{2}}).$
\end{proof}
Let $T \in L(X)$ be   a $g_{z}$-Kato operator. Following Proposition \ref{prop1.6},  we denote by   $\tilde{\alpha}(T)=\alpha(T_{M}),$  $\tilde{\beta}(T)=\beta(T_{M}),$   $\tilde{p}(T)=p(T_{M})$ and  $\tilde{q}(T)=q(T_{M});$ where $(M,N) \in g_{z}KD(T)$ be arbitrary. Furthermore,  if   $T_{M}$ is semi-Fredholm then for every $(M^{'}, N^{'}) \in g_{z}KD(T)$ the operator $T_{M^{'}}$ is also semi-Fredholm  and $\mbox{ind}(T_{M})=\mbox{ind}(T_{M^{'}})$  (this result will be extended in  Lemma \ref{lem2.4}).
\vspace{0.2cm}
\par  In the sequel, for $T \in L(X)$ and $(M,N)\in \mbox{Red}(T),$  we define the operator    $T_{(M,N)}\in L(X)$  by $T_{(M,N)}:=TP_{M}+ P_{N};$ where $P_{M}$  is the projection operator on $X$ onto $M.$ The next  lemma   extends \cite[Theorem A.16]{muller}.
\begin{lem}\label{lem1.7} Let $T \in L(X)$ and let  $(M,N) \in \mbox{Red}(T).$  The following  assertions are equivalent.\\
(i) $\R(T_{M})$ is closed;\\
(ii) $\R(T^{*}_{N^{\perp}})$ is closed;\\
(iii) $\R(T^{*}_{N^{\perp}})\oplus M^{\perp}$ is closed in  the weak-*-topology  $\sigma(X^{*}, X)$ on $X^{*}.$
\end{lem}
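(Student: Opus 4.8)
The plan is to transport everything to the single restriction $T_{M}\colon M\to M$ and then invoke the Banach closed range theorem in its full form. Since $(M,N)\in\mbox{Red}(T),$ the projections $P_{M},P_{N}$ are bounded (closed graph theorem) and commute with $T;$ dualizing gives a bounded decomposition $X^{*}=N^{\perp}\oplus M^{\perp}$ in which $P_{M}^{*}$ is the projection onto $N^{\perp}$ along $M^{\perp},$ and both summands are $T^{*}$-invariant. The first step is to identify $N^{\perp}$ with $M^{*}$: the restriction map $r(f)=f|_{M},$ which is the adjoint of the inclusion $M\hookrightarrow X,$ carries $N^{\perp}$ isomorphically onto $M^{*},$ with inverse $g\mapsto g\circ P_{M},$ the adjoint of the projection $X\to M.$ Because both maps are adjoints of bounded operators, this identification is simultaneously a topological isomorphism and a weak-$*$ homeomorphism for the topologies $\sigma(X^{*},X)|_{N^{\perp}}$ and $\sigma(M^{*},M).$ A direct computation gives $r\circ(T^{*}|_{N^{\perp}})=(T_{M})^{*}\circ r,$ so under this identification $T^{*}_{N^{\perp}}$ is conjugate to $(T_{M})^{*}.$

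Next I would apply the closed range theorem to $S:=T_{M}$: the three statements ``$\R(S)$ is closed'', ``$\R(S^{*})$ is norm-closed'' and ``$\R(S^{*})$ is weak-$*$ closed in $\sigma(M^{*},M)$'' are equivalent. Transporting the first two through the norm-isomorphism $N^{\perp}\cong M^{*},$ together with the fact that $N^{\perp}$ is norm-closed in $X^{*},$ yields (i) $\Longleftrightarrow$ (ii) at once. For (iii), transporting the weak-$*$ statement through the weak-$*$ homeomorphism shows that ``$\R(T^{*}_{N^{\perp}})$ is weak-$*$ closed in $N^{\perp}$'' is equivalent to (i); and since $N^{\perp}$ is itself weak-$*$ closed in $X^{*},$ this is the same as $\R(T^{*}_{N^{\perp}})$ being weak-$*$ closed in $X^{*}.$

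It then remains to pass from $\R(T^{*}_{N^{\perp}})$ to $\R(T^{*}_{N^{\perp}})\oplus M^{\perp}.$ Writing $R:=\R(T^{*}_{N^{\perp}})\subseteq N^{\perp},$ the key identity is $R\oplus M^{\perp}=(P_{M}^{*})^{-1}(R),$ which holds because $P_{M}^{*}$ is the projection onto $N^{\perp}$ with kernel $M^{\perp}$ (the sum is direct since $M^{\perp}\cap N^{\perp}=\{0\}$). As $P_{M}^{*}$ is an adjoint, hence weak-$*$ continuous, weak-$*$ closedness of $R$ forces $R\oplus M^{\perp}=(P_{M}^{*})^{-1}(R)$ to be weak-$*$ closed; conversely, intersecting $R\oplus M^{\perp}$ with the weak-$*$ closed set $N^{\perp}$ returns exactly $R,$ so weak-$*$ closedness of $R\oplus M^{\perp}$ returns that of $R.$ This closes the loop (iii) $\Longleftrightarrow$ (i).

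I expect the main obstacle to be the careful bookkeeping in the weak-$*$ topology: verifying that the identification $N^{\perp}\cong M^{*}$ is a homeomorphism for $\sigma(M^{*},M)$ and not merely a norm isomorphism, and isolating the precise form of the closed range theorem that delivers \emph{weak-$*$} closedness of $\R(S^{*})$ — this is exactly what condition (iii) encodes. By contrast, the norm-level equivalence (i) $\Longleftrightarrow$ (ii) and the projection identity for (iii) are routine once the dual decomposition is in place.
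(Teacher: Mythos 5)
Your proof is correct, but it follows a genuinely different route from the paper's. The paper runs everything through the single auxiliary operator $T_{(M,N)}:=TP_{M}+P_{N}\in L(X)$: it computes $(T_{(M,N)})^{*}=T^{*}P_{N^{\perp}}+P_{M^{\perp}}$, hence $\mathcal{R}(T_{(M,N)})=\mathcal{R}(T_{M})\oplus N$ and $\mathcal{R}((T_{(M,N)})^{*})=\mathcal{R}(T^{*}_{N^{\perp}})\oplus M^{\perp}$, and then one application of the closed range theorem in the form \cite[Theorem A.16]{muller} to $T_{(M,N)}$ delivers all three equivalences simultaneously; in particular the summand $M^{\perp}$ in (iii) appears for free, since $\mathcal{R}(T^{*}_{N^{\perp}})\oplus M^{\perp}$ \emph{is} the range of the adjoint of the auxiliary operator, so no separate weak-$*$ argument is needed for it. You instead apply the same classical theorem to the restriction $T_{M}\in L(M)$, which forces two extra layers of bookkeeping: first the identification $N^{\perp}\cong M^{*}$ (correctly justified as both a norm isomorphism and a weak-$*$ homeomorphism, since both maps are adjoints) together with the intertwining $r\circ(T^{*}|_{N^{\perp}})=(T_{M})^{*}\circ r$, and second the reinstatement of $M^{\perp}$ via the identity $\mathcal{R}(T^{*}_{N^{\perp}})\oplus M^{\perp}=(P_{M}^{*})^{-1}\bigl(\mathcal{R}(T^{*}_{N^{\perp}})\bigr)$ and weak-$*$ continuity of $P_{M}^{*}$, with the converse direction recovered by intersecting with the weak-$*$ closed set $N^{\perp}$. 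Both steps check out (the preimage identity, the directness of the sum from $N^{\perp}\cap M^{\perp}=\{0\}$, and the reduction of relative to global weak-$*$ closedness are all handled correctly). What each approach buys: the paper's packaging is shorter and hides every weak-$*$ subtlety inside the one invocation of the theorem on $X$; yours is more transparent about the structure, making explicit that $T^{*}_{N^{\perp}}$ is conjugate to $(T_{M})^{*}$ and that $M^{\perp}$ enters precisely as the kernel of the dual projection $P_{M}^{*}$ -- facts the paper also exploits implicitly (see its Lemma \ref{lem1.8}) but never isolates.
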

\begin{proof} As $(M,N) \in \mbox{Red}(T)$ then $(P_{N})^{*}=P_{M^{\perp}}$ and $(TP_{M})^{*}=T^{*}P_{N^{\perp}}.$     Then $(T_{(M,N)})^{*}=(TP_{M}+ P_{N})^{*}=T^{*}P_{N^{\perp}}+ P_{M^{\perp}}=T^{*}_{(N^{\perp},M^{\perp})}.$ Thus $\R(T_{(M,N)})=\R(T_{M})\oplus N$ and  $\R((T_{(M,N)})^{*})=\R(T^{*}_{N^{\perp}})\oplus M^{\perp}.$  Remark that  $\R(T_{M})$ is closed if and only if $\R(T_{(M,N)})$ is closed. Hence, the  proof  is complete  by applying  \cite[Theorem A.16]{muller}  to the operator $T_{(M,N)}.$
\end{proof}
Using Lemma \ref{lem1.7} and some well known  classical properties of pseudo-Fredholm and  quasi-Fredholm operators, we immediately obtain:
\begin{cor} Let $T \in L(X).$ The following statements hold.\\
(i) If $T$ is pseudo-Fredholm, then  $\R(T^{*})+\mathcal{H}_{0}(T^{*})$   is closed in  $\sigma(X^{*}, X).$\\
(ii) If $T$ is a Hilbert space quasi-Fredholm operator  of degree $d,$ then  $\R(T^{*})+\mathcal{N}(T^{d*})$ is   closed in  $\sigma(X^{*}, X).$
\end{cor}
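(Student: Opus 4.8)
The plan is to reduce both assertions to the single closedness statement furnished by Lemma \ref{lem1.7} and then to identify the set in question with $\R(T^{*}_{N^{\perp}})\oplus M^{\perp}$. In case (i), since $T$ is pseudo-Fredholm I fix $(M,N)\in GKD(T)$, so that $T_{M}$ is semi-regular and $T_{N}$ is quasi-nilpotent; in case (ii) I invoke the classical fact (Labrousse \cite{labrousse}) that a Hilbert space quasi-Fredholm operator of degree $d$ is of Kato-type of degree $d$, which provides $(M,N)\in \mbox{Red}(T)$ with $T_{M}$ semi-regular and $T_{N}$ nilpotent of degree $d$. In both cases $\R(T_{M})$ is closed, so Lemma \ref{lem1.7}(iii) yields that $\R(T^{*}_{N^{\perp}})\oplus M^{\perp}$ is closed in $\sigma(X^{*},X)$. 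It therefore suffices to show that the target set coincides with $\R(T^{*}_{N^{\perp}})\oplus M^{\perp}$.

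Next I would unwind the duality of the decomposition. From $(M,N)\in \mbox{Red}(T)$ one gets $(N^{\perp},M^{\perp})\in \mbox{Red}(T^{*})$ with $X^{*}=N^{\perp}\oplus M^{\perp}$, and (exactly as in the computation opening the proof of Lemma \ref{lem1.7}) $T^{*}_{N^{\perp}}$ is identified with $(T_{M})^{*}$ and $T^{*}_{M^{\perp}}$ with $(T_{N})^{*}$. Since semi-regularity, quasi-nilpotency and nilpotency are preserved under taking adjoints, $T^{*}_{N^{\perp}}$ is semi-regular while $T^{*}_{M^{\perp}}$ is quasi-nilpotent in case (i) and nilpotent of degree $d$ in case (ii). Splitting along $X^{*}=N^{\perp}\oplus M^{\perp}$ I write $\R(T^{*})=\R(T^{*}_{N^{\perp}})\oplus \R(T^{*}_{M^{\perp}})$ with $\R(T^{*}_{M^{\perp}})\subset M^{\perp}$, together with $\mathcal{H}_{0}(T^{*})=\mathcal{H}_{0}(T^{*}_{N^{\perp}})\oplus \mathcal{H}_{0}(T^{*}_{M^{\perp}})$ and $\mathcal{N}(T^{d*})=\mathcal{N}((T^{*}_{N^{\perp}})^{d})\oplus \mathcal{N}((T^{*}_{M^{\perp}})^{d})$. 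As a quasi-nilpotent operator has quasi-nilpotent part equal to the full space and a nilpotent operator of degree $d$ has $\mathcal{N}(\cdot^{d})$ equal to the full space, the $M^{\perp}$-components give $\mathcal{H}_{0}(T^{*}_{M^{\perp}})=M^{\perp}$ and $\mathcal{N}((T^{*}_{M^{\perp}})^{d})=M^{\perp}$. Hence the target set equals $\big(\R(T^{*}_{N^{\perp}})+\mathcal{H}_{0}(T^{*}_{N^{\perp}})\big)\oplus M^{\perp}$ in case (i) and $\big(\R(T^{*}_{N^{\perp}})+\mathcal{N}((T^{*}_{N^{\perp}})^{d})\big)\oplus M^{\perp}$ in case (ii).

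It then remains to absorb the $N^{\perp}$-components into $\R(T^{*}_{N^{\perp}})$, i.e. to show, for the semi-regular operator $A:=T^{*}_{N^{\perp}}$, that $\mathcal{H}_{0}(A)\subset \R(A)$ (case i) and $\mathcal{N}(A^{d})\subset \R(A)$ (case ii). The second inclusion is the standard semi-regular property $\mathcal{N}(A^{k})\subset \R(A^{m})$ for all $k,m$. The first is the main point, and I would prove it by duality. Given $x\in \mathcal{H}_{0}(A)$ and $f\in \R(A)^{\perp}=\mathcal{N}(A^{*})$, semi-regularity of $A^{*}$ gives $\mathcal{N}(A^{*})\subset \mathcal{K}(A^{*})$, so $f$ admits a backward orbit $f=f_{0}$, $A^{*}f_{n+1}=f_{n}$, with $\|f_{n}\|\leq \epsilon^{n}\|f\|$ for some $\epsilon>0$; then $f=(A^{*})^{n}f_{n}$ gives $\langle x,f\rangle=\langle A^{n}x,f_{n}\rangle$, whence $|\langle x,f\rangle|\leq \epsilon^{n}\|A^{n}x\|\,\|f\|$, and since $\|A^{n}x\|^{1/n}\to 0$ this tends to $0$, forcing $\langle x,f\rangle=0$ and thus $x\in \R(A)$ (because $\R(A)$ is closed). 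Combined with the previous paragraph, the target set equals $\R(T^{*}_{N^{\perp}})\oplus M^{\perp}$, which is closed in $\sigma(X^{*},X)$ by Lemma \ref{lem1.7}, proving both parts.

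The step I expect to be the main obstacle is precisely this inclusion $\mathcal{H}_{0}(A)\subset \R(A)$ for semi-regular $A$: it is the only place where the merely quasi-nilpotent (rather than nilpotent) nature of $T_{N}$ pushes the argument beyond purely algebraic kernel/range inclusions, and it is resolved by letting the \emph{geometric} decay of the backward orbit in $\mathcal{K}(A^{*})$ beat the subexponential decay encoded in $\mathcal{H}_{0}(A)$. A secondary technical point to handle carefully is the identification of $T^{*}_{N^{\perp}}$ and $T^{*}_{M^{\perp}}$ with $(T_{M})^{*}$ and $(T_{N})^{*}$, together with the stability of semi-regularity, quasi-nilpotency and nilpotency under adjunction.
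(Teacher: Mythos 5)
Your proof is correct and takes essentially the same route as the paper: the paper's own argument is precisely the appeal to Lemma \ref{lem1.7} (applied to a $GKD(T)$ pair in case (i), and to a Kato-type decomposition of degree $d$ furnished by Labrousse's theorem in case (ii)) together with the "well known classical properties" identifying $\R(T^{*})+\mathcal{H}_{0}(T^{*})$, resp.\ $\R(T^{*})+\mathcal{N}(T^{d*})$, with $\R(T^{*}_{N^{\perp}})\oplus M^{\perp}$. The only difference is one of detail: you prove from scratch the classical facts the paper leaves implicit, notably the inclusion $\mathcal{H}_{0}(A)\subset\R(A)$ for semi-regular $A$ via the geometric backward-orbit estimate in $\mathcal{K}(A^{*})$, which is exactly the right justification.
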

\par The following lemma extends some  well known results in spectral theory, as   relation between nullity, deficiency   and    some other spectral quantities of a given operator $T$ and its dual  $T^{*}.$
\begin{lem}\label{lem1.8}  Let $T \in L(X)$ and let  $(M,N) \in \mbox{Red}(T).$   the following statements hold.\\
(i) $T_{M}$ is semi-regular if and only if $T^{*}_{N^{\perp}}$ is semi-regular.\\
(ii) If $\R(T_{M})$ is closed then   $\alpha(T_{M})=\beta(T^{*}_{N^{\perp}}),$  $\beta(T_{M})=\alpha(T^{*}_{N^{\perp}}),$ $p(T_{M})=q(T^{*}_{N^{\perp}})$ and $q(T_{M})=p(T^{*}_{N^{\perp}}).$\\
(iii) $\sigma_{a}(T_{M})=\sigma_{s}(T^{*}_{N^{\perp}}),$ $\sigma_{s}(T_{M})=\sigma_{a}(T^{*}_{N^{\perp}}),$   $\sigma_{*}(T_{M})=\sigma_{*}(T^{*}_{N^{\perp}})$ and $r(T_{M})=r(T^{*}_{N^{\perp}});$ where $\sigma_{*}\in \{ \sigma, \sigma_{se},  \sigma_{e},\sigma_{sf}, \sigma_{bf}, \sigma_{d}, \sigma_{b}\}.$ And  if $T_{M}$ is semi-Fredholm then $\mbox{ind}(T_{M})=-\mbox{ind}(T^{*}_{N^{\perp}}).$
\end{lem}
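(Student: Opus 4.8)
The plan is to reduce the entire statement to the classical duality between a single bounded operator and its Banach-space adjoint, by first identifying $T^{*}_{N^{\perp}}$ with the adjoint $(T_{M})^{*}$ of the restriction $T_{M}.$

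First I would record the structural fact that $T^{*}_{N^{\perp}}$ is similar to $(T_{M})^{*}.$ Since $(M,N)\in\mbox{Red}(T),$ the annihilator $N^{\perp}$ is $T^{*}$-invariant, and the restriction map $\rho:N^{\perp}\longrightarrow M^{*},$ $\rho(f)=f|_{M},$ is a bounded bijection with bounded inverse (here one uses that $M$ and $N$ are complemented, so each $\phi\in M^{*}$ extends to some $f\in N^{\perp}$ via $f(m+n)=\phi(m)$). For $f\in N^{\perp}$ and $m\in M,$ the identity $T_{M}m=Tm$ gives $\rho(T^{*}_{N^{\perp}}f)(m)=f(Tm)=((T_{M})^{*}\rho(f))(m),$ so $\rho\,T^{*}_{N^{\perp}}=(T_{M})^{*}\rho,$ i.e. $T^{*}_{N^{\perp}}=\rho^{-1}(T_{M})^{*}\rho.$ (This matches the adjoint computation already made in the proof of Lemma \ref{lem1.7}, where $(T_{(M,N)})^{*}=T^{*}_{(N^{\perp},M^{\perp})}.$) As similarity preserves all the quantities in the statement — semi-regularity, $\alpha,\beta,p,q,$ closedness of the range, the spectra $\sigma,\sigma_{a},\sigma_{s},\sigma_{se},\sigma_{e},\sigma_{sf},\sigma_{bf},\sigma_{d},\sigma_{b},$ the spectral radius and the index — it suffices to prove each assertion with $T^{*}_{N^{\perp}}$ replaced by $(T_{M})^{*}.$

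Writing $A:=T_{M},$ the three parts then follow from textbook operator--adjoint duality on the Banach space $M.$ Part (i) is the self-duality of semi-regularity: $A$ is semi-regular if and only if $A^{*}$ is. For (iii), $A-\lambda I$ is bounded below (resp.\ surjective) if and only if $A^{*}-\lambda I$ is surjective (resp.\ bounded below), giving $\sigma_{a}(A)=\sigma_{s}(A^{*})$ and $\sigma_{s}(A)=\sigma_{a}(A^{*})$; each of the classes invertible, semi-regular, Fredholm, semi-Fredholm, B-Fredholm, Drazin invertible and Browder is stable under passing to the adjoint, which gives $\sigma_{*}(A)=\sigma_{*}(A^{*})$ for the listed $\sigma_{*}$ and hence $r(A)=r(A^{*})$ from $\sigma(A)=\sigma(A^{*})$; and when $A$ is semi-Fredholm so is $A^{*},$ with $\mbox{ind}(A^{*})=-\mbox{ind}(A).$ For (ii), closedness of $\R(A)$ is self-dual, and in that case $\mathcal{N}(A^{*})=\R(A)^{\perp}$ and $\R(A^{*})=\mathcal{N}(A)^{\perp}$ yield $\alpha(A)=\beta(A^{*})$ and $\beta(A)=\alpha(A^{*}).$

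The hard part will be the ascent--descent half of (ii), i.e. $p(A)=q(A^{*})$ and $q(A)=p(A^{*}).$ From $\mathcal{N}(A^{*n})=\R(A^{n})^{\perp}$ one gets $p(A^{*})=\inf\{n:\overline{\R(A^{n})}=\overline{\R(A^{n+1})}\},$ and dually for $q(A^{*})$; so, in contrast with the nullity/deficiency identities, these equalities do not follow from $\R(A)$ being closed alone — they require the ranges $\R(A^{n})$ to be closed for every $n.$ This is precisely what is available in the $g_{z}$-Kato setting, where $T_{M}$ is semi-regular and hence every power $T_{M}^{n}$ is semi-regular with closed range; I would therefore invoke that feature (or phrase the ascent--descent clause under it) to close the argument. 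Everything else — the intertwining identity and the direct transcription of the duality statements — is routine.
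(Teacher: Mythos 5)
Your proof is correct, and its mechanism is genuinely different from the paper's. The paper never identifies $T^{*}_{N^{\perp}}$ with the adjoint of the restriction $T_{M}$; it works on the full space, passing to the companion operator $T_{(M,N)}=TP_{M}+P_{N}=T_{M}\oplus I_{N}$ (for (i) and (ii)) and to $TP_{M}=T_{M}\oplus 0_{N}$ (for (iii)), computing their adjoints $T^{*}_{(N^{\perp},M^{\perp})}$ and $T^{*}_{N^{\perp}}\oplus 0_{M^{\perp}}$ as in Lemma \ref{lem1.7}, and then applying the classical duality theorems to these operators on $X$; the quantities attached to $T_{M}$ and $T^{*}_{N^{\perp}}$ are read off from the direct summands, which in (iii) forces a separate discussion of the point $0$, since the summands $I_{N},$ $0_{N},$ $0_{M^{\perp}}$ contribute to the spectra there. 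Your similarity $\rho\,T^{*}_{N^{\perp}}=(T_{M})^{*}\rho,$ with $\rho:N^{\perp}\rightarrow M^{*}$ the restriction isomorphism, replaces this bookkeeping by the single observation that $T^{*}_{N^{\perp}}$ is, up to similarity, the Banach-space adjoint of $T_{M};$ every clause then becomes a literal transcription of one-operator duality on $M.$ Both arguments rest on the same structural fact, namely the dual decomposition $X^{*}=N^{\perp}\oplus M^{\perp}$ of $X=M\oplus N,$ but your reduction is the more transparent one.

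Your caveat on the ascent--descent half of (ii) is a genuine catch rather than a defect of your proof: with only ``$\R(T_{M})$ closed'' that half of the statement is false, and the paper's proof hides the difficulty in the phrase ``the other equalities go similarly.'' Closedness of $\R(A)$ controls only the first power; in general one has just the inequalities $p(A^{*})\leq q(A)$ and $p(A)\leq q(A^{*}).$ For a concrete failure take $M=X=\ell^{2}(\N)\oplus\ell^{2}(\N),$ $N=\{0\},$ and $A(x,y)=(Dx,x)$ with $De_{n}=\frac{1}{n}e_{n}:$ then $\R(A)=\{(Dx,x): x\in \ell^{2}(\N)\}$ is closed, while $\R(A^{n+1})=\{(Dw,w): w\in \R(D^{n})\}$ decreases strictly, so $q(A)=\infty;$ yet $A^{*}(u,v)=(Du+v,0)$ satisfies $\mathcal{N}(A^{*2})=\mathcal{N}(A^{*})$ by injectivity of $D,$ so $p(A^{*})=1.$ Thus $q(T_{M})=p(T^{*}_{N^{\perp}})$ (and likewise $p(T_{M})=q(T^{*}_{N^{\perp}})$) genuinely needs closedness of $\R(T_{M}^{n})$ for all $n,$ exactly as you say. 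The extra hypothesis is harmless for the paper: wherever the lemma is invoked (Corollary \ref{cor1.9}, Proposition \ref{propp1}, Corollary \ref{cor.i11}), the operator $T_{M}$ is semi-regular or semi-Fredholm, both classes are stable under taking powers, and hence all the ranges $\R(T_{M}^{n})$ are closed. Stating the ascent--descent clause under that hypothesis, as you propose, is the right repair.
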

\begin{proof}
(i) We have   $\mathcal{N}(T_{(M,N)})=\mathcal{N}(T_{M})$ and  $(T_{(M,N)})^{n}=T^{n}_{(M,N)}$ for every $n\in \N.$ It is easy to see that  $T_{M}$ is semi-regular if and only if $T_{(M,N)}$ is semi-regular. As $(T_{(M,N)})^{*}=T^{*}_{(N^{\perp},M^{\perp})}$  then $T_{M}$ is semi-regular if and only if $T^{*}_{N^{\perp}}$ is semi-regular.\\
(ii) We have       $\mathcal{N}((T_{(M,N)})^{n})=\mathcal{N}(T^{n}_{M})$ and  $\R((T_{(M,N)})^{n})=\R(T^{n}_{M})\oplus N$  for every $n\in \N.$ As $\R(T_{(M,N)})=\R(T_{M})\oplus N$ is closed then   $\alpha(T_{M})=\alpha(T_{(M,N)})=\beta(T^{*}_{(N^{\perp},M^{\perp})})=\beta(T^{*}_{N^{\perp}}).$ The other equalities go similarly.\\
(iii)  As $(T_{M}\oplus 0_{N})^{*}=(TP_{M})^{*}=T^{*}P_{N^{\perp}}=T^{*}_{N^{\perp}}\oplus 0_{M^{\perp}},$ then $\sigma_{*}(T_{M})\cup\sigma_{*}(0_{N})=\sigma_{*}(T_{M}\oplus 0_{N})=\sigma_{*}(T^{*}_{N^{\perp}}\oplus 0_{M^{\perp}})=\sigma_{*}(T^{*}_{N^{\perp}})\cup\sigma_{*}(0_{M^{\perp}}).$ We  know   that   $\sigma_{*}(S)=\emptyset$ for every nilpotent   operator $S$ with   $\sigma_{*}\in \{\sigma_{bf}, \sigma_{d}\}.$  Moreover,  the first and the second points imply that  $0\in \sigma_{*}(T_{M})$ if and only if  $0 \in\sigma_{*}(T^{*}_{N^{\perp}});$ where  $\sigma_{*}\in \{ \sigma, \sigma_{se},  \sigma_{e},\sigma_{sf}, \sigma_{b}\}.$ So    $\sigma_{*}(T_{M})=\sigma_{*}(T^{*}_{N^{\perp}})$ and $r(T_{M})=r(T^{*}_{N^{\perp}}).$  The proof of other equalities  spectra is obvious, see Lemma \ref{lem1.7}. Moreover,  if $T_{M}$ is semi-Fredholm then  $T^{*}_{N^{\perp}}$ is also  semi-Fredholm  and $\mbox{ind}(T_{M})=-\mbox{ind}(T^{*}_{N^{\perp}}).$
\end{proof}
\begin{cor}\label{cor1.9} Let $T \in L(X)$ and let  $(M,N) \in \mbox{Red}(T).$  Then    $(M, N)\in  g_{z}KD(T)$ if and only if   $(N^{\perp}, M^{\perp})\in  g_{z}KD(T^{*}).$ In particular, if $T$ is   $g_{z}$-Kato  then $T^{*}$ is   $g_{z}$-Kato.
\end{cor}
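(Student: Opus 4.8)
The plan is to read the whole statement off the duality already encoded in Lemma \ref{lem1.7} and Lemma \ref{lem1.8}, so that essentially no new computation is required. Since $g_{z}$-Kato-ness is defined by a semi-regular condition on the first summand together with a zeroloid condition on the second, I would match these two conditions against their duals one at a time, after first establishing that the candidate pair $(N^{\perp}, M^{\perp})$ is a reducing pair for $T^{*}$.

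First I would verify that $(N^{\perp}, M^{\perp}) \in \mbox{Red}(T^{*})$ for \emph{any} $(M,N)\in \mbox{Red}(T)$, independently of the $g_{z}$-Kato hypothesis. The annihilators $M^{\perp}$ and $N^{\perp}$ are norm-closed, and they are $T^{*}$-invariant because $M, N$ are $T$-invariant: if $f\in M^{\perp}$ and $x\in M$ then $(T^{*}f)(x)=f(Tx)=0$ since $Tx\in M$, and similarly for $N^{\perp}$. The direct-sum decomposition $X^{*}=N^{\perp}\oplus M^{\perp}$ follows by dualizing $P_{M}+P_{N}=I$: as recorded in the proof of Lemma \ref{lem1.7} one has $(P_{M})^{*}=P_{N^{\perp}}$ and $(P_{N})^{*}=P_{M^{\perp}}$, so $P_{N^{\perp}}+P_{M^{\perp}}=I$ is a pair of complementary projections with ranges $N^{\perp}$ and $M^{\perp}$. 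Next I would transfer the two defining conditions. The semi-regular condition passes over directly: Lemma \ref{lem1.8}(i) applied to $(M,N)$ gives that $T_{M}$ is semi-regular if and only if $T^{*}_{N^{\perp}}$ is semi-regular. For the zeroloid condition I would invoke the spectral-permanence part of Lemma \ref{lem1.8}(iii), but applied to the \emph{swapped} reducing pair $(N,M)\in\mbox{Red}(T)$, whose dual partner is $(M^{\perp},N^{\perp})$; with $\sigma_{*}=\sigma$ this yields $\sigma(T_{N})=\sigma(T^{*}_{M^{\perp}})$, hence $\mbox{acc}\,\sigma(T_{N})=\mbox{acc}\,\sigma(T^{*}_{M^{\perp}})$, so $T_{N}$ is zeroloid if and only if $T^{*}_{M^{\perp}}$ is zeroloid. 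Combining the two equivalences shows $(M,N)\in g_{z}KD(T)$ if and only if $(N^{\perp},M^{\perp})\in g_{z}KD(T^{*})$, and the ``in particular'' assertion is then immediate: a $g_{z}$-Kato decomposition of $T$ produces one for $T^{*}$.

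The only point that needs genuine care — and the one I would flag as the main obstacle — is the bookkeeping of which summand plays which role under dualization: the first (semi-regular) summand $M$ of $T$ pairs with the first summand $N^{\perp}$ of $T^{*}$, while the second (zeroloid) summand $N$ pairs with $M^{\perp}$. Getting this matching right is precisely why I would apply Lemma \ref{lem1.8}(iii) to the transposed pair $(N,M)$ rather than to $(M,N)$ directly. Once the correspondences $T_{M}\leftrightarrow T^{*}_{N^{\perp}}$ and $T_{N}\leftrightarrow T^{*}_{M^{\perp}}$ are fixed, both equivalences are biconditional and symmetric, so the converse direction and the reduction to $T^{**}$ require no separate argument.
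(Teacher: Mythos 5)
Your proof is correct and is exactly the derivation the paper intends: the corollary is stated without proof as an immediate consequence of Lemmas \ref{lem1.7} and \ref{lem1.8}, with the reducing pair dualized via $(T_{(M,N)})^{*}=T^{*}_{(N^{\perp},M^{\perp})}$, semi-regularity transferred by Lemma \ref{lem1.8}(i), and the zeroloid condition transferred by the spectral equality in Lemma \ref{lem1.8}(iii). Your explicit attention to applying part (iii) to the swapped pair $(N,M)$ so that $\sigma(T_{N})=\sigma(T^{*}_{M^{\perp}})$ is precisely the bookkeeping the paper leaves implicit.
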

\begin{prop}\label{prop1.10} If $T\in L(X)$ is  $g_{z}$-Kato then\\
(a) There exist $S,R \in L(X)$ such that: ~~

 (i) $T=S + R,$ $RT=TR=0,$ $S$ is quasi-Fredholm of degree $d\leq 1$ and   $R$ is  zeroloid. ~~

(ii)  $\mathcal{N}(S)+ \mathcal{N}(R)=X$ and  $\R(S)\oplus  \overline {\R(R)}$ is closed.\\
(b) There exist $S,R \in L(X)$ such that $SR=RS=(S+R)-I=T,$ $S$ is semi-regular   and $R$ is zeroloid.
\end{prop}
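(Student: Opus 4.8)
The plan is to read both factorizations straight off a fixed generalized Kato zeroloid decomposition $(M,N)\in g_{z}KD(T)$, using the complementary idempotents $P_{M},P_{N}\in L(X)$ associated with $X=M\oplus N$. Since $M,N$ are closed, $T$-invariant and complementary, $P_{M}$ and $P_{N}$ commute with $T$ and satisfy $P_{M}+P_{N}=I$ and $P_{M}P_{N}=P_{N}P_{M}=0$; relative to the splitting one has $T=T_{M}\oplus T_{N}$ with $T_{M}$ semi-regular and $T_{N}$ zeroloid. Every operator produced below is block-diagonal for this splitting, so that all its spectral data can be read off summand by summand.

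For part (a) I would take $S:=TP_{M}=T_{M}\oplus 0_{N}$ and $R:=TP_{N}=0_{M}\oplus T_{N}$. Then $S+R=T(P_{M}+P_{N})=T$, and $SR=RS=T^{2}P_{M}P_{N}=0$ because $P_{M}P_{N}=0$ and $P_{M},P_{N}$ commute with $T$; this is the product identity that decouples the two summands. The operator $R=0_{M}\oplus T_{N}$ is zeroloid by Remark \ref{rema1.2}(iv). For $S$, the range $\R(S)=\R(T_{M})$ is closed (as $T_{M}$ is semi-regular) and $S_{\R(S)}=(T_{M})_{\R(T_{M})}$ is again semi-regular, so $S$ is quasi-Fredholm of degree $\le 1$. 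Condition (ii) then follows from the block structure: $\mathcal{N}(S)\supseteq N$ and $\mathcal{N}(R)\supseteq M$ give $\mathcal{N}(S)+\mathcal{N}(R)=X$, while $\R(S)=\R(T_{M})\subseteq M$ is closed and $\overline{\R(R)}=\overline{\R(T_{N})}\subseteq N$, so the two subspaces meet only in $\{0\}$ and their sum $\R(S)\oplus\overline{\R(R)}$, lying inside the complementary summands $M$ and $N$, is closed.

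For part (b) I would instead take $S:=TP_{M}+P_{N}=T_{(M,N)}=T_{M}\oplus I_{N}$ and $R:=P_{M}+TP_{N}=I_{M}\oplus T_{N}$. A block computation gives $S+R=T+I$, that is $(S+R)-I=T$, and $SR=RS=T_{M}\oplus T_{N}=T$. Here $S=T_{M}\oplus I_{N}$ is semi-regular, since $T_{M}$ is semi-regular, $I_{N}$ is invertible, and a direct sum of semi-regular operators is semi-regular; and $R=I_{M}\oplus T_{N}$ is zeroloid by Remark \ref{rema1.2}(iv), because $I_{M}$ (with $\sigma(I_{M})=\{1\}$) and $T_{N}$ are zeroloid.

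The algebraic identities---the expressions for $S+R$ and the vanishing or equality of the products---are immediate consequences of $P_{M}+P_{N}=I$, $P_{M}P_{N}=0$ and $TP_{M}=P_{M}T$, so they are not where the work lies. The steps I expect to require care, and would write out in full, are the permanence properties under the block decomposition: that the restriction $(T_{M})_{\R(T_{M})}$ inherits semi-regularity (this is what pins the degree of $S$ in (a) at $\le 1$), that a direct sum of semi-regular operators is again semi-regular (needed for $S$ in (b)), and that a direct sum is zeroloid exactly when both summands are---precisely Remark \ref{rema1.2}(iv), which is what licenses calling $R$ zeroloid in both parts. These permanence facts are standard but carry the real content; the remainder is idempotent bookkeeping.
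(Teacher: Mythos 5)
Your proposal is correct and is essentially the paper's own proof: for part (a) you pick the same operators $S=TP_{M}$ and $R=TP_{N}$, and for part (b) the same $S=T_{(M,N)}=T_{M}\oplus I_{N}$ and $R=T_{(N,M)}=I_{M}\oplus T_{N}$. The only difference is cosmetic: you certify that $S$ is quasi-Fredholm of degree $\leq 1$ by noting that $\R(S)=\R(T_{M})$ is closed and $S_{[1]}=(T_{M})_{[1]}$ is semi-regular, whereas the paper checks the equivalent kernel--range identity $\mathcal{N}(S^{n})+\R(S)=\mathcal{N}(S)+\R(S)$ for all $n\geq 1$; the remaining verifications (zeroloidness via Remark \ref{rema1.2}(iv), the block computation of kernels and ranges) coincide.

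One discrepancy is worth flagging, although it afflicts the paper's proof in exactly the same way: item (a)(i) of the statement demands $RT=TR=0$, while what you verify (and all that is true) is $SR=RS=0$; the paper's proof never addresses the product condition at all. For this construction $RT=TR=T^{2}P_{N}=0_{M}\oplus T_{N}^{2}$, which vanishes only when $T_{N}^{2}=0$, and $T_{N}$ is merely zeroloid. Moreover, no other choice of $S$ and $R$ can rescue the literal statement: if $T$ is an injective quasi-nilpotent operator with non-closed range (for instance the operator $Q$ of Remark \ref{rema1.2}(ii)), then $T$ is zeroloid and hence $g_{z}$-Kato, yet $TR=0$ forces $\R(R)\subset\mathcal{N}(T)=\{0\}$, so $R=0$ and $S=T$, which is not quasi-Fredholm of degree $\leq 1$ since $\R(T)$ is not closed. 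So ``$RT=TR=0$'' must be read as a misprint for ``$SR=RS=0$''; your silent substitution of the product condition is the mathematically correct reading, and it is exactly what the paper's argument establishes.
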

\begin{proof} (a) Let $(M,N) \in g_{z}KD(T).$ Then the operators $S=TP_{M}$ and $R=TP_{N}$ respond to the statement (a). Indeed, as $T_{N}$ is zeroloid and  $\mbox{acc}\,\sigma(R)=\mbox{acc}\,\sigma(T_{N})$ then $R$ is zeroloid. Suppose that  $M\notin \{\{0\}, X\}$  (the other case is trivial) and let $n \in \N\setminus\{0\},$ then   $\mathcal{N}(S^{n})=N \oplus \mathcal{N}(T^{n}_{M})$ and $\R(S)=\R(T_{M})$ is closed. Therefore $\mathcal{N}(S^{n})+\R(S)=N + \mathcal{N}(T^{n}_{M})+\R(T_{M})=N+\mathcal{N}(T_{M})+\R(T_{M})=\mathcal{N}(S)+\R(S),$ since $T_{M}$ is semi-regular. Consequently, $S$ is  quasi-Fredholm  of degree $d\leq 1.$ Moreover, $\mathcal{N}(S)+ \mathcal{N}(R)=X$ and  $\R(S)\oplus  \overline {\R(R)}=\R(T_{M})\oplus  \overline {\R(T_{N})}$ is closed.\\
(ii) Let   $(M,N) \in g_{z}KD(T).$  If we take  $S=T_{(M,N)}$ and  $R=T_{(N,M)}$  then $SR=RS=(S+R)-I=T,$   $S=T_{M}\oplus I_{N}$  is semi-regular and $R= I_{M}\oplus T_{N}$ is zeroloid.
\end{proof}
In the case of Hilbert space operator $T,$  the next proposition  shows that   the statement $(a)$ of  Proposition \ref{prop1.10} is equivalent to  say that    $T$ is $g_{z}$-Kato.
\begin{prop}\label{thm1.11} If $H$ is a Hilbert space, then  $T\in L(H)$  is    $g_{z}$-Kato  if and only if there exist $S,R \in L(H)$ such that $T=S + R$ and\\
(i) $RT=TR=0,$ $S$ is quasi-Fredholm of degree $\mbox{dis}(S)\leq 1,$  $R$ is a zeroloid operator;\\
(ii) $\mathcal{N}(S)+ \mathcal{N}(R)=H$ and  $\R(S)\oplus  \overline {\R(R)}$ is closed.
\end{prop}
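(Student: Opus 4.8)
I would prove the two implications separately, since only one of them carries new content. The forward implication needs no fresh work: it is exactly Proposition~\ref{prop1.10}(a). Indeed, if $T$ is $g_{z}$-Kato and $(M,N)\in g_{z}KD(T)$, then $S=TP_{M}=T_{M}\oplus 0_{N}$ and $R=TP_{N}=0_{M}\oplus T_{N}$ satisfy (i)--(ii), and the quasi-Fredholm degree of $S$ is at most $1$ because $T_{M}$ is semi-regular (degree $0$) while $0_{N}$ is nilpotent of order $\le 1$, so $\mbox{dis}(S)\le 1$. Thus the substance lies entirely in the converse, and this is precisely where the Hilbert-space hypothesis is used. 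So assume $H$ is a Hilbert space and $T=S+R$ with $S,R$ as in (i)--(ii); the goal is to produce a pair $(M,N)\in g_{z}KD(T)$.

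The plan is to let $S$ carry the semi-regular behaviour of $T$ and $R$ carry the zeroloid behaviour. First I would record the algebraic content of (i): the parts multiply to zero, $SR=RS=0$, so that $\R(R)\subset\mathcal{N}(S)$ and $\R(S)\subset\mathcal{N}(R)$, and hence $Sx=0$ forces $Tx=Rx$ while $Rx=0$ forces $Tx=Sx$. In other words, $T$ coincides with the zeroloid operator $R$ on the whole of $\mathcal{N}(S)$ and with the quasi-Fredholm operator $S$ on the whole of $\mathcal{N}(R)$. The key consequence is that it suffices to find $(M,N)\in\mbox{Red}(T)$ with $N\subset\mathcal{N}(S)$ and $M\subset\mathcal{N}(R)$: for then $T_{N}=R_{N}$ is the restriction of a zeroloid operator to an invariant subspace, hence zeroloid by Proposition~\ref{propreszeroloid}, while $T_{M}=S_{M}$, so that choosing $M$ to be the semi-regular summand furnished by the Kato-type (Labrousse) decomposition of the Hilbert-space quasi-Fredholm operator $S$ makes $T_{M}$ semi-regular. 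By Definition~\ref{dfn1.3} this would give $(M,N)\in g_{z}KD(T)$.

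The heart of the argument, and the step I expect to be the main obstacle, is producing such a reducing pair sitting inside the two kernels: one must enlarge the visible pieces $\R(S)$ and $\overline{\R(R)}$ by suitable portions of the kernels so as to obtain $H=M\oplus N$ with $M\subset\mathcal{N}(R)$, $N\subset\mathcal{N}(S)$, both summands closed and $T$-invariant, and with the semi-regular reducing subspace of $S$ compatible with $M$. This is exactly where the two conditions in (ii) are consumed: $\mathcal{N}(S)+\mathcal{N}(R)=H$ guarantees that the kernels already cover $H$, while the closedness of $\R(S)\oplus\overline{\R(R)}$ controls the ranges and their independence. It is also where the Hilbert structure is indispensable rather than incidental, because closed subspaces are then orthogonally complemented and, by Lemma~\ref{lem1.7}, the closedness of a sum of ranges is self-dual, so the defining conditions pass simultaneously to $T$ and to $T^{*}$ (in the spirit of Corollary~\ref{cor1.9}); in a general Banach space neither tool is available. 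I anticipate that the genuinely delicate point is separating the overlapping kernel parts of $S$ and $R$ by orthogonality so that the resulting summands are truly $T$-invariant and not merely invariant for $S$ or for $R$.
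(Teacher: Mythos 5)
Your forward implication is fine, and it is exactly what the paper does (it simply quotes Proposition \ref{prop1.10}). The problem is the converse, and it sits precisely where you yourself locate it: the production of a pair $(M,N)\in \mbox{Red}(T)$ with $M\subset\mathcal{N}(R),$ $N\subset\mathcal{N}(S),$ both $T$-invariant and with $S_{M}$ semi-regular is not a ``delicate point'' to be anticipated --- it \emph{is} the whole content of this direction, and you never construct it. Everything you do prove in the converse (that $T$ agrees with $R$ on $\mathcal{N}(S)$ and with $S$ on $\mathcal{N}(R)$, and that once such a pair exists then $T_{N}=R_{N}$ is zeroloid by Proposition \ref{propreszeroloid} while $T_{M}=S_{M}$ is semi-regular) is the easy bookkeeping that surrounds the construction. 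The paper closes exactly this gap by invoking the proof of \cite[Theorem 2.2]{mbekhta}: for a Hilbert-space operator $S$ that is quasi-Fredholm of degree $\leq 1$ and satisfies the kernel/range conditions in (ii), that proof delivers a pair $(M,N)\in GKD(S)$ which in addition satisfies $T_{M}=S_{M}$ and $T_{N}=R_{N};$ the zeroloid hypothesis on $R$ then finishes the argument via Proposition \ref{propreszeroloid}. Passing from the Labrousse--Kato decomposition of $S$ (which a priori only reduces $S$) to a decomposition that is $T$-invariant and compatible with both kernels is where the hypotheses $\mathcal{N}(S)+\mathcal{N}(R)=H$ and $\R(S)\oplus\overline{\R(R)}$ closed are actually consumed; nothing in your proposal performs or cites that step, so what you have is a plan with its only nontrivial ingredient missing.

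There is also an algebraic error at the start of your converse. From (i) as stated, namely $RT=TR=0$ and $T=S+R,$ one gets $SR=RS=-R^{2},$ not $SR=RS=0;$ since $R$ is merely zeroloid, $R^{2}\neq 0$ in general, and with it your inclusions $\R(R)\subset\mathcal{N}(S)$ and $\R(S)\subset\mathcal{N}(R)$ fail. (Your instinct about which relation to work with is nevertheless the right one: the paper's own operators $S=TP_{M},$ $R=TP_{N}$ satisfy $SR=RS=0$ but $RT=TR=0_{M}\oplus T_{N}^{2},$ which need not vanish, so the annihilation condition the theorem can actually support --- and the one underlying Mbekhta's theorem --- is $SR=RS=0;$ this is a defect in the statement's formulation rather than something derivable from it.) The consequences you truly use later, $T=R$ on $\mathcal{N}(S)$ and $T=S$ on $\mathcal{N}(R),$ follow from $T=S+R$ alone and survive; but the stronger inclusions you record are exactly what the missing $T$-invariance argument would have to exploit, so the slip feeds directly into the gap described above.
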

\begin{proof} Assume that $S$ is quasi-Fredholm of degree 1 (the case of  $S$ is semi-regular is obvious) then  from the proof of \cite[Theorem 2.2]{mbekhta},  there exists  $(M,N) \in GKD(S)$ such that $T_{M}=S_{M}$ and  $T_{N}=R_{N}.$  As $R$ is zeroloid then Proposition \ref{propreszeroloid} entails that $T_{N}$ is zeroloid. Thus $T$ is $g_{z}$-Kato. For  the converse, see Proposition \ref{prop1.10}.
\end{proof}

\section{$g_{z}$-Fredholm operators}
\begin{dfn}\label{dfn2.1} $T \in L(X)$ is said  an upper semi-$g_{z}$-Fredholm (resp., a lower semi-$g_{z}$-Fredholm, a $g_{z}$-Fredholm)  operator    if there exists $(M, N) \in \mbox{Red}(T)$ such that  $T_{M}$ is an upper  semi-Fredholm (resp., a lower semi-Fredholm, a Fredholm) operator  and $T_{N}$ is  zeroloid.  $T$ is said a semi-$g_{z}$-Fredholm   if it is an  upper or  a lower semi-$g_{z}$-Fredholm.
\end{dfn}
\noindent As examples, every  generalized Drazin-meromorphic  semi-Fredholm    is a semi-$g_{z}$-Fredholm. But we show in  Example \ref{exs} (see later) that the converse is not true.   And   every zeroloid operator is  $g_{z}$-Fredholm.

\vspace{0.2cm}
The next proposition gives some  relations between  semi-$g_{z}$-Fredholm  and   $g_{z}$-Kato operators.
\begin{prop}\label{prop2.3}
  Let $T \in L(X).$ The following statements are equivalent.\\
(i)  $T$ is  semi-$g_{z}$-Fredholm {\rm[}resp.,   upper semi-$g_{z}$-Fredholm, lower semi-$g_{z}$-Fredholm, $g_{z}$-Fredholm{\rm]};\\
(ii) $T$  is $g_{z}$-Kato and   $\mbox{min}\,\{\tilde{\alpha}(T), \tilde{\beta}(T)\}<\infty$  {\rm[}resp.,    $T$  is $g_{z}$-Kato and  $\tilde{\alpha}(T)<\infty,$   $T$  is $g_{z}$-Kato and  $\tilde{\beta}(T)<\infty,$ $T$  is $g_{z}$-Kato and   $\mbox{max}\,\{\tilde{\alpha}(T), \tilde{\beta}(T)\}<\infty${\rm]};\\
(iii)  $T$  is $g_{z}$-Kato and  $0 \notin \mbox{acc}\,\sigma_{spbf}(T)$  {\rm[}resp.,   $T$  is $g_{z}$-Kato and $0 \notin \mbox{acc}\,\sigma_{upbf}(T),$ $T$  is $g_{z}$-Kato and $0 \notin \mbox{acc}\,\sigma_{lpbf}(T),$  $T$  is $g_{z}$-Kato and $0 \notin \mbox{acc}\,\sigma_{pbf}(T)${\rm]}.
\end{prop}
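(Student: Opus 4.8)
The plan is to establish the two equivalences (i) $\Longleftrightarrow$ (ii) and (ii) $\Longleftrightarrow$ (iii) separately, carrying out the argument in the upper semi-$g_{z}$-Fredholm case and deducing the lower case by the symmetric argument (in $\beta$), the $g_{z}$-Fredholm case by combining the two finiteness conditions, and the semi case from the observation that ``semi-$g_{z}$-Fredholm'' means ``upper or lower'' while $\mbox{min}\{\tilde{\alpha}(T),\tilde{\beta}(T)\}<\infty$ means ``$\tilde{\alpha}(T)<\infty$ or $\tilde{\beta}(T)<\infty$''. Two tools drive the proof: Kato's theorem (to split a semi-Fredholm summand into a semi-regular part plus a nilpotent part) for (i) $\Longleftrightarrow$ (ii), and the punctured neighborhood Theorem \ref{thm1.5} for (ii) $\Longleftrightarrow$ (iii).

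For (i) $\Longrightarrow$ (ii) I would start from $(M,N)\in\mbox{Red}(T)$ with $T_{M}$ upper semi-Fredholm and $T_{N}$ zeroloid. Since $T_{M}$ is semi-Fredholm, Kato's theorem supplies $(M_{1},M_{2})\in\mbox{Red}(T_{M})$ with $T_{M_{1}}$ semi-regular and $T_{M_{2}}$ nilpotent; then $(M_{1},M_{2}\oplus N)\in\mbox{Red}(T)$, the operator $T_{M_{2}\oplus N}=T_{M_{2}}\oplus T_{N}$ is zeroloid by Remark \ref{rema1.2}(ii),(iv), and $T_{M_{1}}$ is semi-regular, so $(M_{1},M_{2}\oplus N)\in g_{z}KD(T)$ and $T$ is $g_{z}$-Kato. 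As $\mathcal{N}(T_{M_{1}})=\mathcal{N}(T)\cap M_{1}\subseteq\mathcal{N}(T_{M})$ we get $\tilde{\alpha}(T)=\alpha(T_{M_{1}})\leq\alpha(T_{M})<\infty$. For the converse (ii) $\Longrightarrow$ (i), take any $(M,N)\in g_{z}KD(T)$: then $\R(T_{M})$ is closed (semi-regularity) and $\alpha(T_{M})=\tilde{\alpha}(T)<\infty$, so $T_{M}$ is upper semi-Fredholm and, with $T_{N}$ zeroloid, $T$ is upper semi-$g_{z}$-Fredholm.

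The heart of the matter is (ii) $\Longleftrightarrow$ (iii), which I would reduce to the following local statement: \emph{if $T$ is $g_{z}$-Kato with $(M,N)\in g_{z}KD(T)$ and $\epsilon=\gamma(T_{M})$, then for every $\lambda\in B(0,\epsilon)\setminus\{0\}$ one has $\lambda\notin\sigma_{upbf}(T)$ if and only if $\tilde{\alpha}(T)<\infty$}. Granting it, (ii) $\Longleftrightarrow$ (iii) is immediate: if $\tilde{\alpha}(T)<\infty$ the whole punctured disc avoids $\sigma_{upbf}(T)$, so $0\notin\mbox{acc}\,\sigma_{upbf}(T)$, while if $\tilde{\alpha}(T)=\infty$ the whole punctured disc lies in $\sigma_{upbf}(T)$, so $0\in\mbox{acc}\,\sigma_{upbf}(T)$. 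To prove the local statement I invoke Theorem \ref{thm1.5}: for such $\lambda$ the operator $T-\lambda I$ is pseudo-Fredholm and $\tilde{\alpha}(T)=\mbox{dim}\,\mathcal{N}(T-\lambda I)\cap\mathcal{K}(T-\lambda I)$. It then suffices to show that \emph{a pseudo-Fredholm operator $S$ is upper pseudo semi-B-Fredholm if and only if $\mbox{dim}\,\mathcal{N}(S)\cap\mathcal{K}(S)<\infty$}. Writing $(M',N')\in GKD(S)$ with $S_{M'}$ semi-regular and $S_{N'}$ quasi-nilpotent, one has $\mathcal{K}(S_{N'})=\{0\}$ and $\mathcal{N}(S_{M'})\subseteq\mathcal{K}(S_{M'})$, whence $\mathcal{N}(S)\cap\mathcal{K}(S)=\mathcal{N}(S_{M'})$; so $\mbox{dim}\,\mathcal{N}(S)\cap\mathcal{K}(S)=\alpha(S_{M'})$, and this is finite exactly when the semi-regular part $S_{M'}$ is upper semi-Fredholm, i.e. when $S$ is upper pseudo semi-B-Fredholm. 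The decomposition-independence of $\alpha(S_{M'})$ (needed so that ``upper pseudo semi-B-Fredholm'' via an arbitrary Kato refinement matches the chosen $GKD$) is exactly Proposition \ref{prop1.6} applied to the $g_{z}$-Kato operator $S$.

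The lower case is dual, replacing $\mathcal{N}(S)\cap\mathcal{K}(S)$ by $\R(S)+\mathcal{H}_{0}(S)$, $\tilde{\alpha}$ by $\tilde{\beta}$, and $\sigma_{upbf}$ by $\sigma_{lpbf}$ through Theorem \ref{thm1.5}(iii); the Fredholm and semi cases then follow on the punctured disc from $\sigma_{pbf}=\sigma_{upbf}\cup\sigma_{lpbf}$ and $\sigma_{spbf}=\sigma_{upbf}\cap\sigma_{lpbf}$. I expect the main obstacle to be precisely the local identification in the previous paragraph, namely verifying $\mathcal{N}(S)\cap\mathcal{K}(S)=\mathcal{N}(S_{M'})$ for a pseudo-Fredholm $S$ (requiring the vanishing of the analytic core of the quasi-nilpotent part together with the decomposition-independence of the nullity of the semi-regular part), since everything else is a direct application of Kato's theorem and Theorem \ref{thm1.5}.
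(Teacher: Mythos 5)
Your proposal is correct and follows essentially the same route as the paper: the paper proves (i) $\Longleftrightarrow$ (ii) by refining the semi-Fredholm summand into a $g_{z}KD$ pair with semi-Fredholm semi-regular part (citing \cite[Corollary 3.7]{aznay-ouahab-zariouh2}, which encapsulates exactly your Kato-decomposition refinement $(M_{1}, M_{2}\oplus N)$), and then declares (ii) $\Longleftrightarrow$ (iii) to be ``a consequence of Theorem \ref{thm1.5}''. Your write-up simply makes explicit what the paper outsources to that citation and to its terse appeal to Theorem \ref{thm1.5} --- in particular the identification $\mbox{dim}\,\mathcal{N}(S)\cap\mathcal{K}(S)=\alpha(S_{M^{'}})$ for a pseudo-Fredholm operator $S$ with $(M^{'},N^{'})\in GKD(S)$, together with the decomposition-independence from Proposition \ref{prop1.6}, which is indeed the substance behind the paper's one-line argument.
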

\begin{proof} $(i) \Longleftrightarrow (ii)$ Assume  that $T$ is semi-$g_{z}$-Fredholm, then  there exists  $(A,B)\in \mbox{Red}(T)$ such that $T_{A}$ is semi-Fredholm and $T_{B}$ is zeroloid.  From \cite[Corollary  3.7]{aznay-ouahab-zariouh2}, there exists a pair  $(M, N)\in g_{z}KD(T)$ such that    $T_{M}$ is semi-Fredholm.  Thus $T$ is  $g_{z}$-Kato operator and  $\mbox{min}\,\{\tilde{\alpha}(T), \tilde{\beta}(T)\}=\mbox{min}\,\{\alpha(T_{M}), \beta(T_{M})\}<\infty.$ The converse is obvious.  The other    equivalence cases  go similarly.\\
$(ii) \Longleftrightarrow (iii)$ Is a consequence of Theorem \ref{thm1.5}.
\end{proof}
\begin{cor} $T\in L(X)$  is   $g_{z}$-Fredholm    if and only if $T$ is  an  upper  and    lower  semi-$g_{z}$-Fredholm.
\end{cor}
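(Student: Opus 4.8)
The plan is to reduce everything to Proposition~\ref{prop2.3}, which translates each of the four $g_{z}$-Fredholm-type properties into the single statement ``$T$ is $g_{z}$-Kato'' together with a finiteness condition on the quantities $\tilde{\alpha}(T)$ and $\tilde{\beta}(T)$. These quantities are well defined thanks to Proposition~\ref{prop1.6}, and that is the structural fact that makes the corollary almost immediate.

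First I would dispose of the forward implication, which is direct from Definition~\ref{dfn2.1}. If $T$ is $g_{z}$-Fredholm, choose $(M,N)\in\mbox{Red}(T)$ with $T_{M}$ Fredholm and $T_{N}$ zeroloid. Since a Fredholm operator is simultaneously upper semi-Fredholm and lower semi-Fredholm, the very same reducing pair $(M,N)$ witnesses that $T$ is both upper semi-$g_{z}$-Fredholm and lower semi-$g_{z}$-Fredholm. No further work is needed here.

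For the converse I would invoke Proposition~\ref{prop2.3} in both its upper and lower forms. Assume $T$ is both upper and lower semi-$g_{z}$-Fredholm. By the upper case of Proposition~\ref{prop2.3}, $T$ is $g_{z}$-Kato with $\tilde{\alpha}(T)<\infty$; by the lower case, $T$ is $g_{z}$-Kato with $\tilde{\beta}(T)<\infty$. Combining these, $T$ is $g_{z}$-Kato and $\mbox{max}\,\{\tilde{\alpha}(T),\tilde{\beta}(T)\}<\infty$, which is exactly the characterization of $g_{z}$-Fredholm operators supplied by the last case of Proposition~\ref{prop2.3}. Hence $T$ is $g_{z}$-Fredholm, completing the equivalence.

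The only subtlety worth flagging — and the reason the argument is a clean syllogism rather than a gluing construction — is that the hypotheses ``upper semi-$g_{z}$-Fredholm'' and ``lower semi-$g_{z}$-Fredholm'' are, a priori, witnessed by \emph{different} decompositions in $g_{z}KD(T)$. This potential mismatch is precisely what Proposition~\ref{prop1.6} eliminates: $\tilde{\alpha}(T)$ and $\tilde{\beta}(T)$ are independent of the chosen pair $(M,N)\in g_{z}KD(T)$, so the two finiteness conclusions live on the same well-defined invariants and may be merged freely. Beyond noticing that one should appeal to Proposition~\ref{prop1.6} at this point, I do not anticipate any genuine obstacle.
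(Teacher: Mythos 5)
Your proof is correct and is precisely the route the paper intends: the corollary is stated immediately after Proposition~\ref{prop2.3} with no separate argument, since combining its upper, lower, and Fredholm cases (with $\tilde{\alpha}(T)$, $\tilde{\beta}(T)$ well defined by Proposition~\ref{prop1.6}) yields the equivalence exactly as you wrote. Your remark on the potential mismatch of decompositions, resolved by Proposition~\ref{prop1.6}, is a fair point worth making explicit.
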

The following lemma  will allow us  to define the   index for    semi-$g_{z}$-Fredholm operators.
\begin{lem}\label{lem2.4} Let $T \in L(X).$ If   there exist two pair  of closed $T$-invariant subspaces $(M, N)$ and $(M^{'}, N^{'})$ such that   $M \oplus  N=M^{'}\oplus N^{'}$ is closed, $T_{M}$ and   $T_{M^{'}}$ are semi-Fredholm operators, $T_{N}$ and   $T_{N^{'}}$ are zeroloid operators, then $\mbox{ind}(T_{M})= \mbox{ind}(T_{M^{'}}).$
\end{lem}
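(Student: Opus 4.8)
The plan is to compute the semi-Fredholm index of a single operator in two different ways, exploiting that $M\oplus N$ and $M'\oplus N'$ are literally the same closed subspace. Set $Y:=M\oplus N=M'\oplus N'$. Since $M,N$ (resp. $M',N'$) are closed $T$-invariant subspaces whose topological direct sum is the closed subspace $Y$, the space $Y$ is itself closed and $T$-invariant, and the restriction $T_Y$ splits in two ways as
\[
T_Y=T_M\oplus T_N=T_{M'}\oplus T_{N'}.
\]
For any $\lambda\in\C$ this yields the simultaneous decompositions $T_Y-\lambda I=(T_M-\lambda I)\oplus(T_N-\lambda I)=(T_{M'}-\lambda I)\oplus(T_{N'}-\lambda I)$, and the whole argument rests on selecting one good $\lambda\neq 0$ for which all four summands are semi-Fredholm with controlled indices.

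First I would fix a perturbation radius. By the classical perturbation theory of semi-Fredholm operators (punctured neighbourhood theorem), there is $\epsilon>0$ such that for all $|\lambda|<\epsilon$ the operators $T_M-\lambda I$ and $T_{M'}-\lambda I$ remain semi-Fredholm with $\mbox{ind}(T_M-\lambda I)=\mbox{ind}(T_M)$ and $\mbox{ind}(T_{M'}-\lambda I)=\mbox{ind}(T_{M'})$. Next I would invoke the zeroloid hypothesis on $T_N$ and $T_{N'}$: by Remark \ref{rema1.2}(i) each of $\sigma(T_N)$ and $\sigma(T_{N'})$ is at most countable, so $\big(\sigma(T_N)\cup\sigma(T_{N'})\big)\cap\big(B(0,\epsilon)\setminus\{0\}\big)$ is countable while the punctured disc is uncountable. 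Hence I may choose $\lambda_0$ with $0<|\lambda_0|<\epsilon$ and $\lambda_0\notin\sigma(T_N)\cup\sigma(T_{N'})$; for this $\lambda_0$ both $T_N-\lambda_0 I$ and $T_{N'}-\lambda_0 I$ are invertible, in particular Fredholm of index $0$.

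Finally I would read off the index of $T_Y-\lambda_0 I$ from each splitting. Because the avoided summands are invertible (so $\alpha=\beta=0$ with closed range), index additivity over direct sums applies with no $\pm\infty$ ambiguity, giving
\[
\mbox{ind}(T_Y-\lambda_0 I)=\mbox{ind}(T_M-\lambda_0 I)+0=\mbox{ind}(T_M)
\]
from the first decomposition and, identically, $\mbox{ind}(T_Y-\lambda_0 I)=\mbox{ind}(T_{M'})$ from the second. Since the left-hand side is one and the same number, $\mbox{ind}(T_M)=\mbox{ind}(T_{M'})$, as required.

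I would expect the only delicate point to be the simultaneous choice of $\lambda_0$: it must lie inside the common stability disc $B(0,\epsilon)$, be nonzero, and avoid the spectra of both zeroloid parts at once. This is exactly where the zeroloid assumption is essential, via the countability of the spectrum recorded in Remark \ref{rema1.2}(i). Once $\lambda_0$ is secured, the index additivity is routine precisely because the discarded summands are genuinely invertible rather than merely semi-Fredholm, so no infinite indices can spoil the bookkeeping.
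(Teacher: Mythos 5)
Your proof is correct, and it takes a route that is genuinely more self-contained than the paper's. Both arguments share the same skeleton: perturb by a small $\lambda\neq 0$, use the punctured neighbourhood theorem to freeze $\mbox{ind}(T_{M}-\lambda I)=\mbox{ind}(T_{M})$ and $\mbox{ind}(T_{M^{'}}-\lambda I)=\mbox{ind}(T_{M^{'}})$, and then compare the two splittings of the same operator via index additivity. The difference is in how the zeroloid summands are neutralized. The paper keeps \emph{every} $\lambda$ in the punctured disc: there $T_{N}-\lambda I$ and $T_{N^{'}}-\lambda I$ are only generalized Drazin invertible (since $\sigma_{gd}(T_{N})=\mbox{acc}\,\sigma(T_{N})\subset\{0\}$), so $T-\lambda I$ is merely pseudo semi-B-Fredholm, and the paper must invoke the index theory for pseudo semi-B-Fredholm operators developed in the authors' earlier work (cited as Remark 2.4 there), together with its additivity, to write $\mbox{ind}(T-\lambda I)=\mbox{ind}(T_{M}-\lambda I)+\mbox{ind}(T_{N}-\lambda I)=\mbox{ind}(T_{M^{'}}-\lambda I)+\mbox{ind}(T_{N^{'}}-\lambda I).$ You instead exploit the countability of the spectrum of a zeroloid operator (Remark \ref{rema1.2}(i)) to select a single $\lambda_{0}$ in the punctured disc lying outside $\sigma(T_{N})\cup\sigma(T_{N^{'}})$, which makes both zeroloid summands honestly invertible; then only the classical semi-Fredholm index and its trivial additivity with an invertible summand are needed, with no risk of $\pm\infty$ cancellation. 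What your choice buys is independence from the external generalized-index machinery (whose well-definedness is itself a nontrivial result); what the paper's choice buys is the stronger and reusable fact that the generalized index is constant on the whole punctured neighbourhood, which the authors exploit again later (e.g.\ in the proposition asserting $\mbox{ind}(T)=\mbox{ind}(T-\lambda I)$ near $0$ for semi-$g_{z}$-Fredholm operators). One small point worth making explicit in your write-up: since $M$ and $N$ are closed with $M\cap N=\{0\}$ and $M\oplus N$ closed, the decomposition $Y=M\oplus N$ is automatically topological (closed graph theorem), which is what justifies both $T_{Y}=T_{M}\oplus T_{N}$ and the closedness of $\R(T_{M}-\lambda_{0} I)\oplus N$ used in the additivity step.
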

\begin{proof}
As  $T_{M}$ and  $T_{M^{'}}$ are  semi-Fredholm operators   then from punctured neighborhood theorem for semi-Fredholm operators, there  exists $\epsilon >0$ such that $B(0, \epsilon) \subset \sigma_{sf}(T_{M})^C\cap \sigma_{sf}(T_{M^{'}})^C,$  $\mbox{ind}(T_{M}- \lambda I) =\mbox{ind}(T_{M})$ and   $\mbox{ind}(T_{M^{'}}- \lambda I) =\mbox{ind}(T_{M^{'}})$ for every $\lambda \in B(0, \epsilon).$    From \cite[Remark 2.4]{aznay-ouahab-zariouh} and the fact that $T_{N}$ and  $T_{N^{'}}$ are  zeroloid,  we conclude that $B_{0}:=B(0, \epsilon)\setminus\{0\}\subset  \sigma_{sf}(T_{M})^C\cap \sigma_{sf}(T_{M^{'}})^C\cap  \sigma_{gd}(T_{N})^C\cap \sigma_{gd}(T_{N^{'}})^C\subset\sigma_{spbf}(T)^C.$  Let $\lambda \in B_{0},$  then $T-\lambda I$ is   pseudo semi-B-Fredholm and      $\mbox{ind}(T- \lambda I)=\mbox{ind}(T_{M}- \lambda I)+\mbox{ind}(T_{N} - \lambda I)=\mbox{ind}(T_{M^{'}}- \lambda I)+\mbox{ind}(T_{N^{'}} - \lambda I).$ Thus  $\mbox{ind}(T_{M})=\mbox{ind}(T_{M^{'}}).$
\end{proof}

\begin{dfn}  Let  $T\in L(X)$ be  a semi-$g_{z}$-Fredholm. We  define its   index $\mbox{ind}(T)$ as the index of  $T_{M};$ where $M$ is a closed $T$-invariant subspace  which  has a complementary closed $T$-invariant subspace  $N$ such that $T_{M}$ is    semi-Fredholm  and $T_{N}$ is zeroloid. From Lemma \ref{lem2.4}, the index of $T$ is independent of the choice of the pair  $(M, N)$ appearing in  Definition \ref{dfn2.1} of $T$ as a   semi-$g_{z}$-Fredholm. In addition,  from  Proposition \ref{prop2.3}  we have  $\mbox{ind}(T)=\tilde{\alpha}(T)-\tilde{\beta}(T).$
\end{dfn}
We say that $T \in L(X)$ is an   upper semi-$g_{z}$-Weyl (resp., a lower  semi-$g_{z}$-Weyl, a  semi-$g_{z}$-Weyl) operator   if   $T$  is an upper semi-$g_{z}$-Fredholm  (resp., a lower  semi-$g_{z}$-Fredholm, a  $g_{z}$-Fredholm) with $\mbox{ind} (T)\leq 0$ (resp.,  $\mbox{ind} (T)\geq 0$,  $\mbox{ind} (T)=0$).
\begin{rema}\label{rema2.5} (i) Every zeroloid operator $T$  is a $g_{z}$-Fredholm with $\tilde{\alpha}(T)=\tilde{\beta}(T)=\mbox{ind}(T)=0.$  A pseudo semi-B-Fredholm   is      semi-$g_{z}$-Fredholm and  its usual  index   coincides with its index as a  semi-$g_{z}$-Fredholm.\\
(ii)  $T$  is    $g_{z}$-Fredholm   if and only if $T$ is    semi-$g_{z}$-Fredholm with an integer index  $\mbox{ind}(T).$
 $T$  is   $g_{z}$-Weyl   if and only if $T$  is an  upper and     lower  semi-$g_{z}$-Weyl.
\end{rema}
\begin{prop}\label{prop2.6}   Let  $T \in L(X)$ and    $S \in L(Y)$  are      semi-$g_{z}$-Fredholm.  Then \\
(i) $T^{n}$ is semi-$g_{z}$-Fredholm and $\mbox{ind}(T^{n})=n.\mbox{ind}(T),$ for every integer $n\geq 1.$\\
(ii) $T\oplus S$ is  semi-$g_{z}$-Fredholm and   $\mbox{ind} (T\oplus S)=\mbox{ind} (T)+ \mbox{ind} (S).$
\end{prop}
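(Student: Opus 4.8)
The plan is to exhibit, in each case, an explicit reducing pair of the required form and then to reduce everything to the classical behaviour of semi-Fredholm operators under powers and direct sums, combined with the stability of the zeroloid class recorded in Remark \ref{rema1.2}. Throughout I would rely on Lemma \ref{lem2.4} to guarantee that the indices computed from these particular decompositions coincide with the intrinsic indices of $T^{n}$ and $T\oplus S$.

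For (i), I would fix a pair $(M,N)\in\mbox{Red}(T)$ realising $T$ as semi-$g_{z}$-Fredholm, so that $T_{M}$ is semi-Fredholm and $T_{N}$ is zeroloid. Since every $T$-invariant subspace is $T^{n}$-invariant, we have $(M,N)\in\mbox{Red}(T^{n})$ together with $(T^{n})_{M}=(T_{M})^{n}$ and $(T^{n})_{N}=(T_{N})^{n}$. The first factor is again semi-Fredholm (a power of an upper, resp. lower, semi-Fredholm operator is upper, resp. lower, semi-Fredholm) and satisfies $\mbox{ind}((T_{M})^{n})=n\,\mbox{ind}(T_{M})$, while $(T_{N})^{n}$ is zeroloid by Remark \ref{rema1.2}(iii). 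Hence $(M,N)$ witnesses that $T^{n}$ is semi-$g_{z}$-Fredholm, and the definition of the index then gives $\mbox{ind}(T^{n})=\mbox{ind}((T_{M})^{n})=n\,\mbox{ind}(T_{M})=n\,\mbox{ind}(T)$.

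For (ii), I would choose pairs $(M_{1},N_{1})\in\mbox{Red}(T)$ and $(M_{2},N_{2})\in\mbox{Red}(S)$ with $T_{M_{1}},S_{M_{2}}$ semi-Fredholm and $T_{N_{1}},S_{N_{2}}$ zeroloid. After the obvious rearrangement of $X\oplus Y$, the pair $(M_{1}\oplus M_{2},\,N_{1}\oplus N_{2})$ lies in $\mbox{Red}(T\oplus S)$, with $(T\oplus S)_{M_{1}\oplus M_{2}}=T_{M_{1}}\oplus S_{M_{2}}$ and $(T\oplus S)_{N_{1}\oplus N_{2}}=T_{N_{1}}\oplus S_{N_{2}}$. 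The second summand is zeroloid by Remark \ref{rema1.2}(iv), and granting that the first is semi-Fredholm with $\mbox{ind}(T_{M_{1}}\oplus S_{M_{2}})=\mbox{ind}(T_{M_{1}})+\mbox{ind}(S_{M_{2}})$, one concludes that $T\oplus S$ is semi-$g_{z}$-Fredholm with $\mbox{ind}(T\oplus S)=\mbox{ind}(T)+\mbox{ind}(S)$.

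The step I expect to be the main obstacle is precisely this direct-sum claim in (ii). It is classical when the two factors are of the same type — both upper, or both lower semi-Fredholm — since then $\tilde{\alpha}(T\oplus S)=\tilde{\alpha}(T)+\tilde{\alpha}(S)$, resp. $\tilde{\beta}(T\oplus S)=\tilde{\beta}(T)+\tilde{\beta}(S)$, remains finite and Proposition \ref{prop2.3} applies; one must, however, read the index in $\mathbb{Z}\cup\{\pm\infty\}$ and interpret the additivity formula with the usual convention, the genuinely mixed configuration (one factor upper with infinite deficiency, the other lower with infinite nullity) being the only case where $\tilde{\alpha}$ and $\tilde{\beta}$ are simultaneously infinite and the index becomes indeterminate. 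I would therefore organise the argument so that the matched cases are treated directly and the bookkeeping of infinite indices is made explicit, keeping Lemma \ref{lem2.4} in hand to secure independence of the chosen decompositions.
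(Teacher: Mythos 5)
Your argument is essentially identical to the paper's own proof: the same pairs $(M,N)$ and $(M_{1}\oplus M_{2},N_{1}\oplus N_{2})$, the same appeal to Remark \ref{rema1.2}(iii)--(iv) for the zeroloid parts, and the same classical facts about powers and direct sums of semi-Fredholm operators. Part (i) is unproblematic, since a power of an upper (resp.\ lower) semi-Fredholm operator stays in the same class and no mixing of types can occur.

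The point you single out as the main obstacle in (ii), however, deserves more than a caveat: in the genuinely mixed configuration the conclusion is simply false, and the paper's proof glosses over this exactly where it asserts that $T_{M_{1}\oplus M_{2}}$ is semi-Fredholm. Take $V\in L(\ell^{2}(\N))$ defined by $V(x_{1},x_{2},\dots)=(x_{1},0,x_{2},0,\dots)$ and $S=V^{*}$. Then $V$ is bounded below with $\beta(V)=\infty$, so $V$ is upper semi-Fredholm with $\mbox{ind}(V)=-\infty$, while $V^{*}$ is surjective with $\alpha(V^{*})=\infty$, so $V^{*}$ is lower semi-Fredholm with $\mbox{ind}(V^{*})=+\infty$; in particular both are semi-$g_{z}$-Fredholm. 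But $V\oplus V^{*}$ is semi-regular (a direct sum of semi-regular operators) with $\alpha(V\oplus V^{*})=\beta(V\oplus V^{*})=\infty$, so $\left(\ell^{2}(\N)\oplus\ell^{2}(\N),\{0\}\right)\in g_{z}KD(V\oplus V^{*})$ and hence $\tilde{\alpha}(V\oplus V^{*})=\tilde{\beta}(V\oplus V^{*})=\infty$. By Proposition \ref{prop1.6} these quantities do not depend on the decomposition, so Proposition \ref{prop2.3} shows that $V\oplus V^{*}$ is not semi-$g_{z}$-Fredholm; no cleverer choice of reducing pair can repair this. Thus your plan to ``treat the matched cases directly'' is not merely tidier bookkeeping but the only correct reading of the statement: Proposition \ref{prop2.6}(ii) needs the proviso that $\{\mbox{ind}(T),\mbox{ind}(S)\}\neq\{-\infty,+\infty\}$ (equivalently, $T$ and $S$ both upper, both lower, or at least one of them $g_{z}$-Fredholm), and under that proviso your argument --- which is the paper's --- is complete, with the index formula read in $\Z\cup\{\pm\infty\}$ under the usual conventions.
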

\begin{proof}(i)  As $T$  is semi-$g_{z}$-Fredholm, then  there exists $(M, N) \in \mbox{Red}(T)$ such that $T_{M}$ is   semi-Fredholm   and $T_{N}$ is zeroloid. So   $(M, N) \in Red(T^{n}),$  $T_{M}^{n}$ is   semi-Fredholm   and $T_{N}^{n}$ is zeroloid. Thus    $\mbox{ind}(T^{n})=\mbox{ind}(T_{M}^{n})=n.\mbox{ind}(T_{M})=n.\mbox{ind}(T).$\\
(ii)   Since   $T \in L(X)$ and $S \in L(Y)$  are    semi-$g_{z}$-Fredholm, then  there exist $(M_{1}, N_{1}) \in \mbox{Red}(T)$ and  $(M_{2}, N_{2}) \in Red(S)$ such that $T_{M_{1}}$ and $T_{M_{2}}$  are  semi-Fredholm,  $T_{N_{1}}$ and $T_{N_{2}}$  are  zeroloid. Hence $T_{M_{1}\oplus M_{2}}$  is   semi-Fredholm and $T_{N_{1}\oplus N_{2}}$ is zeroloid.   It is easily seen that $(M_{1}\oplus M_{2}, N_{1}\oplus N_{2})\in Red(T\oplus S),$ and hence   $\mbox{ind} (T\oplus S)=\mbox{ind} ((T\oplus S)_{M_{1}\oplus M_{2}})=\mbox{ind} (T_{M_{1}})+ \mbox{ind} (S_{M_{2}})=\mbox{ind} (T)+ \mbox{ind} (S).$
\end{proof}
Denote  by  $\sigma_{ug_{z}f}(T),$ $\sigma_{lg_{z}f}(T),$ $\sigma_{sg_{z}f}(T),$ $\sigma_{g_{z}f}(T),$ $\sigma_{ug_{z}w}(T),$ $\sigma_{lg_{z}w}(T),$ $\sigma_{sg_{z}w}(T)$ and  $\sigma_{g_{z}w}(T)$  respectively, the upper   semi-$g_{z}$-Fredholm spectrum, the  lower  semi-$g_{z}$-Fredholm spectrum, the    semi-$g_{z}$-Fredholm, the      $g_{z}$-Fredholm spectrum spectrum,  the upper   semi-$g_{z}$-Weyl spectrum, the  lower  semi-$g_{z}$-Weyl spectrum, the    semi-$g_{z}$-Weyl spectrum and the     $g_{z}$-Weyl spectrum   of a given $T \in L(X).$
\begin{cor}\label{cor0} For every  $T \in L(X),$ we have   $\sigma_{g_{z}f}(T)=\sigma_{ug_{z}f}(T)\cup\sigma_{lg_{z}f}(T)$ and $\sigma_{g_{z}w}(T)=\sigma_{ug_{z}w}(T)\cup\sigma_{lg_{z}w}(T).$
\end{cor}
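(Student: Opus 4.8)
The plan is to reduce both set-theoretic equalities to pointwise (in $\lambda$) characterizations that are already recorded in the text, and then pass to complements. For any of the spectra involved, a point $\lambda$ lies \emph{outside} the spectrum precisely when $T-\lambda I$ enjoys the corresponding property; consequently the complement of a union of two spectra corresponds to the \emph{conjunction} of the two defining properties. So in each case it suffices to check that the two-sided property characterizing the left-hand spectrum is equivalent, applied to $T-\lambda I$, to the simultaneous occurrence of the two one-sided properties on the right.

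For the first equality I would fix $\lambda \in \C$ and apply the Corollary stated immediately before Lemma \ref{lem2.4} (that an operator is $g_{z}$-Fredholm if and only if it is both upper and lower semi-$g_{z}$-Fredholm) to the operator $T-\lambda I$. This yields that $T-\lambda I$ is $g_{z}$-Fredholm exactly when it is simultaneously upper semi-$g_{z}$-Fredholm and lower semi-$g_{z}$-Fredholm. Ranging over all $\lambda$ and taking complements then gives $\sigma_{g_{z}f}(T)=\sigma_{ug_{z}f}(T)\cup\sigma_{lg_{z}f}(T)$.

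For the second equality the same scheme applies, now invoking Remark \ref{rema2.5}(ii) in place of the Corollary: $T-\lambda I$ is $g_{z}$-Weyl if and only if it is both upper and lower semi-$g_{z}$-Weyl, and passing to complements gives $\sigma_{g_{z}w}(T)=\sigma_{ug_{z}w}(T)\cup\sigma_{lg_{z}w}(T)$. The only point that needs genuine care, and the one I would spell out explicitly, is the index bookkeeping underlying that remark. If $T-\lambda I$ is both upper and lower semi-$g_{z}$-Weyl, then in particular it is both upper and lower semi-$g_{z}$-Fredholm, so $\tilde{\alpha}(T-\lambda I)<\infty$ and $\tilde{\beta}(T-\lambda I)<\infty$ hold at once; by the Corollary $T-\lambda I$ is then $g_{z}$-Fredholm, and by Lemma \ref{lem2.4} its index $\mbox{ind}(T-\lambda I)=\tilde{\alpha}(T-\lambda I)-\tilde{\beta}(T-\lambda I)$ is a well-defined finite integer. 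The upper semi-$g_{z}$-Weyl condition forces $\mbox{ind}(T-\lambda I)\leq 0$ and the lower semi-$g_{z}$-Weyl condition forces $\mbox{ind}(T-\lambda I)\geq 0$, so the two sign constraints combine to $\mbox{ind}(T-\lambda I)=0$, i.e.\ $T-\lambda I$ is $g_{z}$-Weyl; conversely index $0$ satisfies both inequalities. There is no serious obstacle beyond confirming that the index is finite and well-defined before merging the inequalities $\leq 0$ and $\geq 0$ into equality.
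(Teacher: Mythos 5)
Your proof is correct and follows exactly the route the paper intends: the paper states this corollary without proof precisely because it is the pointwise application, to $T-\lambda I$, of the Corollary following Proposition \ref{prop2.3} (for the Fredholm equality) and of Remark \ref{rema2.5}(ii) (for the Weyl equality), followed by passing to complements. Your extra care with the index bookkeeping (finiteness of $\tilde{\alpha}(T-\lambda I)$ and $\tilde{\beta}(T-\lambda I)$ via Lemma \ref{lem2.4} before combining $\mbox{ind}\leq 0$ and $\mbox{ind}\geq 0$) is exactly the content implicitly relied upon in that remark, so nothing is missing.
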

\begin{prop}
Let $T \in L(X)$ be  a semi-B-Fredholm operator which  is    semi-$g_{z}$-Fredholm.   Then $T$  is quasi  semi-B-Fredholm  and   its  index  as a semi-B-Fredholm coincides with its index as a semi-$g_{z}$-Fredholm.
\end{prop}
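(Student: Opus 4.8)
The plan is to build a single Kato-type decomposition of $T$ that simultaneously witnesses both properties, and then to match the two indices through a common value read off in a punctured disc centred at $0$.

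First I would obtain the decomposition. Since $T$ is semi-B-Fredholm it is quasi-Fredholm (Introduction, \cite[Proposition 2.5]{berkani-sarih}), so there is a Kato-type decomposition $(M,N)\in KD(T)$ with $T_{M}$ semi-regular and $T_{N}$ nilpotent of some degree $d$. As a nilpotent operator is zeroloid, $(M,N)\in g_{z}KD(T)$, whence $\alpha(T_{M})=\tilde{\alpha}(T)$ and $\beta(T_{M})=\tilde{\beta}(T)$ by Proposition \ref{prop1.6}. Since $T$ is semi-$g_{z}$-Fredholm, Proposition \ref{prop2.3} gives $\mbox{min}\{\tilde{\alpha}(T),\tilde{\beta}(T)\}<\infty$, and therefore $\mbox{min}\{\alpha(T_{M}),\beta(T_{M})\}<\infty$. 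As $T_{M}$ is semi-regular its range is closed, so $T_{M}$ is in fact semi-Fredholm; thus $(M,N)\in\mbox{Red}(T)$ with $T_{M}$ semi-Fredholm and $T_{N}$ nilpotent, which is precisely the statement that $T$ is quasi semi-B-Fredholm.

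For the indices I would use the punctured neighbourhood theorem from both sides. The classical punctured neighbourhood theorem for semi-B-Fredholm operators \cite{berkani-sarih} gives $\epsilon_{1}>0$ such that, for $0<|\lambda|<\epsilon_{1}$, the operator $T-\lambda I$ is semi-Fredholm with $\mbox{ind}(T-\lambda I)$ equal to the semi-B-Fredholm index of $T$. On the other hand, arguing with the decomposition $(M,N)$ exactly as in the proof of Lemma \ref{lem2.4}, there is $\epsilon_{2}>0$ such that, for $0<|\lambda|<\epsilon_{2}$, the splitting $T-\lambda I=(T_{M}-\lambda I)\oplus(T_{N}-\lambda I)$ has $T_{M}-\lambda I$ semi-Fredholm with $\mbox{ind}(T_{M}-\lambda I)=\mbox{ind}(T_{M})$ and $T_{N}-\lambda I$ generalized Drazin invertible (because $T_{N}$ is zeroloid, so $\lambda\notin\mbox{acc}\,\sigma(T_{N})$), hence of index $0$; thus $\mbox{ind}(T-\lambda I)=\mbox{ind}(T_{M})$, which is the semi-$g_{z}$-Fredholm index of $T$. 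Taking any $\lambda$ with $0<|\lambda|<\mbox{min}\{\epsilon_{1},\epsilon_{2}\}$ forces the two indices to coincide, both being equal to the single integer $\mbox{ind}(T-\lambda I)$.

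The step I expect to be the main obstacle is precisely this index reconciliation, and an error, if any, would hide there: one must be sure that the value $\alpha(T_{[m_{T}]})-\beta(T_{[m_{T}]})$ of the semi-B-Fredholm definition is the one that appears in the punctured disc, and that the zeroloid summand $T_{N}-\lambda I$ genuinely contributes index $0$ and not merely Fredholmness. A self-contained alternative that avoids the semi-B-Fredholm punctured neighbourhood theorem is available through the same decomposition: for $n\ge\mbox{max}\{m_{T},d\}$ one has $T^{n}=T_{M}^{n}\oplus 0$, so $\R(T^{n})=\R(T_{M}^{n})$ and $T_{[n]}=(T_{M})_{[n]}$, giving $\alpha(T_{[n]})-\beta(T_{[n]})=\mbox{ind}(T_{M})$; together with the stability of $\alpha(T_{[n]})-\beta(T_{[n]})$ for $n\ge m_{T}$ this again identifies the semi-B-Fredholm index with $\mbox{ind}(T_{M})$.
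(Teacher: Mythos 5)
Your proof has a genuine gap at its first step, and everything else rests on it. From ``$T$ is semi-B-Fredholm, hence quasi-Fredholm'' you conclude that $T$ admits a Kato-type decomposition $(M,N)\in KD(T)$. That implication is a theorem of Labrousse \cite{labrousse} \emph{for Hilbert space operators}; the present paper works on an arbitrary complex Banach space, where a quasi-Fredholm operator --- even a semi-B-Fredholm one --- is not known to admit any Kato-type decomposition unless suitable subspaces are complemented. This is precisely the content of M\"{u}ller's results in \cite{mullerB}, which yield the decomposition for B-Fredholm operators only because in that case the relevant subspaces are finite dimensional or of finite codimension. The paper itself is careful on this point: Proposition \ref{thm1.11} is stated only for Hilbert spaces, and the converses in Proposition \ref{prop.i4} are asserted only ``in the case of Hilbert space''. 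So the decomposition on which both halves of your argument rely (the quasi semi-B-Fredholmness and the index identification) is simply not available in this generality.

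The paper's proof distributes the two hypotheses the other way round and thereby avoids the obstacle. It starts from the decomposition that the semi-$g_{z}$-Fredholm hypothesis actually supplies, namely $(M,N)\in\mbox{Red}(T)$ with $T_{M}$ semi-Fredholm and $T_{N}$ zeroloid, and uses the semi-B-Fredholm hypothesis only on the summand $N$: since $\mathcal{N}(T_{N}^{n})=\mathcal{N}(T^{n})\cap N$ and $\R(T_{N}^{n})=\R(T^{n})\cap N,$ the restriction $T_{N}$ is again semi-B-Fredholm, and a zeroloid semi-B-Fredholm operator is Drazin invertible (it has topological uniform descent, and both it and its dual have the SVEP at $0,$ so the implications (A) and (B) of the introduction become equivalences). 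Hence $N=A\oplus B$ with $T_{A}$ invertible and $T_{B}$ nilpotent, the splitting being the purely algebraic Drazin decomposition $N=\R(T_{N}^{n})\oplus\mathcal{N}(T_{N}^{n}),$ which requires no complementedness hypothesis. Then $T=T_{M\oplus A}\oplus T_{B}$ exhibits $T$ as quasi semi-B-Fredholm, and the punctured neighbourhood theorem for semi-Fredholm operators gives $\mbox{ind}(T_{M})=\mbox{ind}(T_{[m_{T}]}).$ Your second paragraph --- the index reconciliation through small $\lambda\neq 0,$ with the zeroloid summand contributing index $0$ because $T_{N}-\lambda I$ is generalized Drazin invertible --- is sound and is essentially the argument of Lemma \ref{lem2.4}; if you replace your first step by the refinement of the $g_{z}$-decomposition just described, that part of your proof goes through unchanged.
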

\begin{proof} Let $(M, N)\in \mbox{Red}(T)$  such that  $T_{M}$ is  semi-Fredholm    and $T_{N}$ is zeroloid.   Hence $T_{N}$ is  Drazin invertible, since $T$ is semi-B-Fredholm.  So there exists $(A, B)\in Red(T_{N})$  such that   $T_{A}$ is   invertible     and $T_{B}$ is  nilpotent. It is easy to get that   $M \oplus A$ is a closed subspace,  so that  $T_{M \oplus A}$ is  semi-Fredholm. Consequently,   $T=T_{M \oplus A}\oplus T_{B}$ is quasi  semi-B-Fredholm.    Furthermore,  the punctured neighborhood theorem for semi-Fredholm operators implies  that $\mbox{ind}(T_{M})=\mbox{ind}(T_{[m_{T}]}).$
\end{proof}
From  \cite[Theorem 7]{mullerB} and the previous proposition we obtain the following corollary.
\begin{cor}
 Every B-Fredholm   operator $T \in L(X)$  is  $g_{z}$-Fredholm and its usual  index  coincides with its index as a $g_{z}$-Fredholm.
\end{cor}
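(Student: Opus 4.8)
The plan is to derive the corollary as a direct combination of the Berkani-type decomposition of B-Fredholm operators, furnished by \cite[Theorem 7]{mullerB}, with the preceding proposition; no new estimate is needed, only a check that the hypotheses line up.

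First I would apply \cite[Theorem 7]{mullerB}: since $T$ is B-Fredholm there is $(M,N)\in\mbox{Red}(T)$ with $T_{M}$ Fredholm and $T_{N}$ nilpotent. A nilpotent operator has $\sigma(T_{N})\subset\{0\}$, hence $\mbox{acc}\,\sigma(T_{N})=\emptyset$, so $T_{N}$ is zeroloid (alternatively, nilpotent $\Rightarrow$ meromorphic $\Rightarrow$ zeroloid by Remark \ref{rema1.2}(ii)). By Definition \ref{dfn2.1} this decomposition already exhibits $T$ as a $g_{z}$-Fredholm operator, with $g_{z}$-Fredholm index equal to $\mbox{ind}(T_{M})$.

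Next I would feed this into the preceding proposition. Being B-Fredholm, $T$ is in particular semi-B-Fredholm, and by the step above it is also semi-$g_{z}$-Fredholm; thus the hypotheses of that proposition are satisfied. Its conclusion then gives that the usual index of $T$ as a (semi-)B-Fredholm operator, $\mbox{ind}(T_{[m_{T}]})$, equals its index as a semi-$g_{z}$-Fredholm operator, $\mbox{ind}(T_{M})$. Since these are exactly the usual B-Fredholm index and the $g_{z}$-Fredholm index of $T$, the two agree and the proof is finished.

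The one genuinely non-trivial ingredient --- and the reason the previous proposition must be invoked rather than argued afresh --- is the identification $\mbox{ind}(T_{[m_{T}]})=\mbox{ind}(T_{M})$: the usual index lives on the stabilized range $\R(T^{m_{T}})$ while the $g_{z}$-Fredholm index lives on the Fredholm summand $M$, two a priori different Fredholm restrictions. That these share an index is precisely the content of the punctured-neighbourhood computation in the preceding proposition, so here the work reduces to verifying its hypotheses, as done above.
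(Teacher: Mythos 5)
Your proposal is correct and follows essentially the same route as the paper: the paper's (one-line) proof likewise combines \cite[Theorem 7]{mullerB}, giving $(M,N)\in\mbox{Red}(T)$ with $T_{M}$ Fredholm and $T_{N}$ nilpotent (hence zeroloid), with the preceding proposition to identify $\mbox{ind}(T_{M})$ with $\mbox{ind}(T_{[m_{T}]})$. You have merely made explicit the verification of hypotheses that the paper leaves implicit.
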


\begin{prop}\label{propp1}  If  $T \in L(X)$ is  a  semi-$g_{z}$-Fredholm operator  then $T^{*}$ is   semi-$g_{z}$-Fredholm,  $\tilde{\alpha}(T)=\tilde{\beta}(T^{*}),$ $\tilde{\beta}(T)=\tilde{\alpha}(T^{*})$  and  $\mbox{ind}(T)=-\mbox{ind}(T^{*}).$
\end{prop}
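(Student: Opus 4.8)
The plan is to reduce everything to the duality already established at the level of a single reducing pair in Lemma~\ref{lem1.8} and Corollary~\ref{cor1.9}, so that the statement becomes bookkeeping rather than a fresh analytic argument. First I would invoke Proposition~\ref{prop2.3}: since $T$ is semi-$g_{z}$-Fredholm it is in particular $g_{z}$-Kato with $\mbox{min}\{\tilde{\alpha}(T),\tilde{\beta}(T)\}<\infty$. Hence I may fix a genuine generalized Kato zeroloid decomposition $(M,N)\in g_{z}KD(T)$, for which $T_{M}$ is semi-regular; the finiteness of $\mbox{min}\{\alpha(T_{M}),\beta(T_{M})\}$ then upgrades $T_{M}$ to a semi-Fredholm operator, so in particular $\R(T_{M})$ is closed. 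Working with a $g_{z}KD$-representative (rather than the more general pair allowed in Definition~\ref{dfn2.1}) is precisely what makes $\tilde{\alpha},\tilde{\beta}$ directly readable off the restriction.

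Next I would pass to the dual via Corollary~\ref{cor1.9}, which places $(N^{\perp},M^{\perp})$ in $g_{z}KD(T^{*})$; by Definition~\ref{dfn1.3} this already delivers the dual decomposition, with $(T^{*})_{N^{\perp}}$ semi-regular and $(T^{*})_{M^{\perp}}$ zeroloid (the ingredients behind Corollary~\ref{cor1.9} being Lemma~\ref{lem1.8}(i) and Proposition~\ref{propreszeroloid}). Since $\R(T_{M})$ is closed, Lemma~\ref{lem1.8}(ii) supplies $\alpha(T_{M})=\beta((T^{*})_{N^{\perp}})$ and $\beta(T_{M})=\alpha((T^{*})_{N^{\perp}})$, while Lemma~\ref{lem1.8}(iii) gives $\mbox{ind}(T_{M})=-\mbox{ind}((T^{*})_{N^{\perp}})$ once $T_{M}$ is known to be semi-Fredholm.

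Finally I would translate these equalities into the intrinsic quantities. Reading $\tilde{\alpha}(T)=\alpha(T_{M})$, $\tilde{\beta}(T)=\beta(T_{M})$ off $(M,N)$ and, using that $(N^{\perp},M^{\perp})\in g_{z}KD(T^{*})$, reading $\tilde{\alpha}(T^{*})=\alpha((T^{*})_{N^{\perp}})$, $\tilde{\beta}(T^{*})=\beta((T^{*})_{N^{\perp}})$, the identities become exactly $\tilde{\alpha}(T)=\tilde{\beta}(T^{*})$ and $\tilde{\beta}(T)=\tilde{\alpha}(T^{*})$. In particular $\mbox{min}\{\tilde{\alpha}(T^{*}),\tilde{\beta}(T^{*})\}<\infty$, so $(T^{*})_{N^{\perp}}$ is semi-Fredholm and $T^{*}$ is semi-$g_{z}$-Fredholm; the index formula then follows from $\mbox{ind}(T)=\tilde{\alpha}(T)-\tilde{\beta}(T)=\tilde{\beta}(T^{*})-\tilde{\alpha}(T^{*})=-\mbox{ind}(T^{*})$. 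I do not expect a genuine obstacle: all the analytic weight already sits in Lemma~\ref{lem1.8} and Corollary~\ref{cor1.9}. The only point demanding care is consistency of the $\tilde{\alpha},\tilde{\beta}$ conventions---namely verifying that, although Definition~\ref{dfn2.1} permits a merely semi-Fredholm $T_{M}$, the quantities $\tilde{\alpha}(T),\tilde{\beta}(T)$ are computed on a semi-regular $g_{z}KD$-representative, and that the matched pair $(N^{\perp},M^{\perp})$ for $T^{*}$ is itself a $g_{z}KD$-decomposition, so that $\tilde{\alpha}(T^{*}),\tilde{\beta}(T^{*})$ are legitimately the nullity and deficiency of $(T^{*})_{N^{\perp}}$.
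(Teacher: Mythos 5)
Your proof is correct and takes essentially the same route as the paper, whose entire proof of this proposition is the citation ``See Lemma~\ref{lem1.8}'': you have simply made explicit the bookkeeping that citation leaves implicit, namely Proposition~\ref{prop2.3} to obtain a $g_{z}KD$-pair $(M,N)$ with $T_{M}$ semi-Fredholm, Corollary~\ref{cor1.9} to dualize it to $(N^{\perp},M^{\perp})\in g_{z}KD(T^{*})$, and Lemma~\ref{lem1.8}(ii)--(iii) to transfer nullity, deficiency and index. The care you take about the convention for $\tilde{\alpha},\tilde{\beta}$ (semi-regular $g_{z}KD$-representative versus the more general pair of Definition~\ref{dfn2.1}) is exactly the right point to check, and your handling of it via Propositions~\ref{prop1.6} and~\ref{prop2.3} matches the paper's framework.
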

\begin{proof} See  Lemma \ref{lem1.8}.
\end{proof}
Our next definition gives  a new  class of operators that extends the class of semi-Browder operators.
\begin{dfn} We say that $T \in L(X)$ is  an upper semi-$g_{z}$-Browder (resp., a lower  semi-$g_{z}$-Browder, $g_{z}$-Browder) if $T$ is a direct sum  of an upper  semi-Browder (resp., a lower  semi-Browder, Browder) operator and a zeroloid operator.
\end{dfn}

\begin{prop}\label{prop2} Let $T \in L(X).$   The following statements are equivalent.\\
(i) $T$ is  an upper semi-$g_{z}$-Browder {\rm[}resp.,   $T$ is  a lower  semi-$g_{z}$-Browder,  $T$ is $g_{z}$-Browder{\rm]};\\
(ii) $T$ is  an upper $g_{z}$-Weyl  and $T$  has the SVEP at 0  {\rm[}resp.,   $T$ is a  lower semi-$g_{z}$-Weyl and $T^{*}$  has the SVEP at 0, $T$ is   $g_{z}$-Weyl and $T$ or  $T^{*}$ has the SVEP at 0{\rm]};\\
(iii)  $T$ is an upper semi-$g_{z}$-Fredholm  and $T$ has the SVEP at 0 {\rm[}resp.,  $T$ is a  lower  semi-$g_{z}$-Fredholm  and $T^{*}$ has the SVEP at 0,    $T$ is  $g_{z}$-Fredholm and $T\oplus T^{*}$ has the SVEP at 0{\rm]}.
\end{prop}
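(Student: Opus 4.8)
The plan is to prove the three equivalences through the single cycle $(i)\Rightarrow(ii)\Rightarrow(iii)\Rightarrow(i)$ in each of the upper, lower and two-sided cases, after first isolating the \emph{bridge} between the SVEP at $0$ and the extended ascent/descent $\tilde p(T),\tilde q(T)$.

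First I would record the bridge: if $T$ is $g_{z}$-Kato then $T$ has the SVEP at $0$ if and only if $\tilde p(T)<\infty$, and dually $T^{*}$ has the SVEP at $0$ if and only if $\tilde q(T)<\infty$. Fix $(M,N)\in g_{z}KD(T)$, so $T=T_{M}\oplus T_{N}$ with $T_{M}$ semi-regular and $T_{N}$ zeroloid. Since a zeroloid operator has countable, hence empty-interior, spectrum (Remark \ref{rema1.2}), both $T_{N}$ and $T_{N}^{*}$ (again zeroloid, as the dual has the same spectrum) have the SVEP at every point, in particular at $0$. As $A\oplus B$ has the SVEP at a point if and only if both summands do, $T$ has the SVEP at $0$ iff $T_{M}$ does; and $T_{M}$ being semi-regular has topological uniform descent, so by the equivalence version of implication $(A)$ this is exactly $p(T_{M})=\tilde p(T)<\infty$. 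The descent statement follows identically from $(B)$. This is the verbatim extension of Proposition \ref{propa1} from pseudo-Fredholm to $g_{z}$-Kato, the only new input being that zeroloid operators enjoy the SVEP.

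For the upper case: for $(i)\Rightarrow(ii)$ I would write $T=T_{1}\oplus T_{2}$ with $T_{1}$ upper semi-Browder and $T_{2}$ zeroloid; then $T_{1}$ is upper semi-Fredholm with $p(T_{1})<\infty$, whence $\mathrm{ind}(T_{1})\le 0$ and $T_{1}$ has the SVEP at $0$ by $(A)$, while $T_{2}$ is $g_{z}$-Fredholm of index $0$ with the SVEP everywhere (Remark \ref{rema2.5}). Reading $(T_{1},T_{2})$ as Fredholm/zeroloid parts, $T$ is upper semi-$g_{z}$-Fredholm of index $\mathrm{ind}(T_{1})\le 0$ (Proposition \ref{prop2.6}), hence upper semi-$g_{z}$-Weyl, and $T$ has the SVEP at $0$. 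The implication $(ii)\Rightarrow(iii)$ is immediate, since every upper semi-$g_{z}$-Weyl operator is upper semi-$g_{z}$-Fredholm. For $(iii)\Rightarrow(i)$ I would invoke Proposition \ref{prop2.3} to obtain $(M,N)\in g_{z}KD(T)$ with $T_{M}$ upper semi-Fredholm; the SVEP at $0$ gives $\tilde p(T)=p(T_{M})<\infty$ by the bridge, so $T_{M}$ is upper semi-Fredholm with finite ascent, i.e. upper semi-Browder, and $T=T_{M}\oplus T_{N}$ exhibits $T$ as upper semi-$g_{z}$-Browder. The lower case is the verbatim dual, replacing $T$ by $T^{*}$, ascent by descent, and using $\tilde q(T)=q(T_{M})<\infty$.

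The two-sided case is where the real work lies, and I expect $(ii)\Rightarrow(iii)$ to be the main obstacle. Here $(iii)\Rightarrow(i)$ is painless: $T\oplus T^{*}$ having the SVEP at $0$ splits as ``$T$ and $T^{*}$ both have it,'' so $\tilde p(T),\tilde q(T)<\infty$, and the Fredholm summand $T_{M}$ with finite ascent and descent is Browder, giving a $g_{z}$-Browder decomposition; and $(i)\Rightarrow(ii)$ follows as above, a Browder summand having index $0$. The delicate point is $(ii)\Rightarrow(iii)$: from $g_{z}$-Weyl (so $\tilde\alpha(T)=\tilde\beta(T)$, i.e. $\mathrm{ind}(T)=0$) together with, say, the SVEP of $T$ at $0$, one only obtains $\tilde p(T)<\infty$, and one must upgrade this to the SVEP of $T^{*}$ at $0$ in order to feed $T\oplus T^{*}$. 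This rests on the classical fact that a Fredholm operator of index $0$ has finite ascent if and only if it has finite descent: applied to $T_{M}$ (Fredholm with $\alpha(T_{M})=\beta(T_{M})$), $p(T_{M})<\infty$ forces $q(T_{M})<\infty$, hence $\tilde q(T)<\infty$ and, by the bridge, $T^{*}$ has the SVEP at $0$; the symmetric argument handles the hypothesis that $T^{*}$ has the SVEP at $0$. Assembling these upgrades $(ii)$ to ``$T\oplus T^{*}$ has the SVEP at $0$,'' closing the cycle.
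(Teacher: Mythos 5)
Your proof is correct and follows essentially the same route as the paper's: both arguments reduce to the Kato part $T_M$ of a $g_z$-Kato decomposition, use that zeroloid operators (and their duals) have SVEP so that SVEP splits onto $T_M$, invoke the equivalence versions of implications (A)/(B) for operators with topological uniform descent to translate SVEP at $0$ into $\tilde p(T)<\infty$ (resp. $\tilde q(T)<\infty$), and close the two-sided case with the classical ascent-descent duality for index-zero Fredholm operators (Aiena's Lemma 1.22). The only difference is organizational — you prove a cycle $(i)\Rightarrow(ii)\Rightarrow(iii)\Rightarrow(i)$ with the SVEP/ascent bridge isolated as a standalone lemma, while the paper proves the equivalences $(i)\Leftrightarrow(ii)$ and $(i)\Leftrightarrow(iii)$ directly — which is cosmetic rather than substantive.
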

\begin{proof}
 (i) $\Longleftrightarrow$ (ii) Suppose that $T$ is $g_{z}$-Browder, then there exists $(M,N)\in g_{z}KD(T)$ such that  $T_{M}$ is Browder. So $T_{M},$ $(T_{M})^{*},$ $T_{N}$ and $(T_{N})^{*}$  have the SVEP at 0. Thus $T$ and $T^{*}$ have the SVEP at $0.$ Conversely,   if $T$ is $g_{z}$-Weyl and $T$ or  $T^{*}$ has the SVEP at 0 then  there exists $(M, N) \in g_{z}KD(T)$ such that  $T_{M}$ is Weyl  and $T_{M}$  or $(T_{M})^{*}$ has  the SVEP at 0.  So   $\mbox{max}\{\tilde{\alpha}(T), \tilde{\beta}(T)\}<\infty$ and  $\mbox{min}\{\tilde{p}(T), \tilde{q}(T)\}<\infty.$ This implies  from  \cite[Lemma 1.22]{aiena} that     $\mbox{max}\{\tilde{p}(T), \tilde{q}(T)\}<\infty$ and  then    $T_{M}$ is Browder. Therefore   $T$ is $g_{z}$-Browder. The other  equivalence cases go similarly.\\
 (i) $\Longleftrightarrow$ (iii)  Suppose that  $T$ is $g_{z}$-Fredholm and $T\oplus T^{*}$ has the SVEP at $0.$ Let $(M, N) \in g_{z}KD(T)$  such that  $T_{M}$ is a Fredholm  operator  and $T_{N}$ is  zeroloid.   Hence  $T_{M}\oplus (T_{M})^{*}$  has the SVEP at 0,  which implies from  the implications  (A) and (B) defined in the introduction part  that $T_{M}$ is  Browder and then  $T$ is a $g_{z}$-Browder. The converse is clear and the other equivalence cases  go similarly.
\end{proof}
The proof of  the following results are obvious and is left to the reader.
\begin{prop}  If $T\in L(X)$ is semi-$g_{z}$-Fredholm, then there exists $\epsilon>0$ such that $B_{0}:=B(0, \epsilon)\setminus \{0\} \subset (\sigma_{spbf}(T))^C$ and $\mbox{ind}(T)=\mbox{ind}(T- \lambda I)$ for every $\lambda \in B_{0}.$
\end{prop}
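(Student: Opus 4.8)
The plan is to reduce the statement to the classical punctured neighborhood theorem for ordinary semi-Fredholm operators, exactly as was done in the proof of Lemma \ref{lem2.4}, by working with a single fixed generalized Kato zeroloid decomposition. Since $T$ is semi-$g_{z}$-Fredholm, I would first fix a pair $(M, N) \in \mbox{Red}(T)$ for which $T_{M}$ is semi-Fredholm and $T_{N}$ is zeroloid; by the definition of the index of a semi-$g_{z}$-Fredholm operator, $\mbox{ind}(T) = \mbox{ind}(T_{M})$.

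First I would apply the punctured neighborhood theorem for semi-Fredholm operators to $T_{M}$: there exists $\epsilon > 0$ such that $B(0, \epsilon) \subset (\sigma_{sf}(T_{M}))^{C}$ and $\mbox{ind}(T_{M} - \lambda I) = \mbox{ind}(T_{M})$ for every $\lambda \in B(0, \epsilon)$. Next, since $T_{N}$ is zeroloid, Remark \ref{rema1.2}(ii) gives $\sigma_{gd}(T_{N}) = \mbox{acc}\,\sigma(T_{N}) \subset \{0\}$, so for every $\lambda \in B_{0} := B(0, \epsilon) \setminus \{0\}$ the operator $T_{N} - \lambda I$ is generalized Drazin invertible; in particular it is pseudo semi-B-Fredholm with $\mbox{ind}(T_{N} - \lambda I) = 0$, because a generalized Drazin invertible operator admits a $\mbox{Red}$-decomposition into an invertible part and a quasi-nilpotent part.

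Then, for $\lambda \in B_{0}$, the splitting $T - \lambda I = (T_{M} - \lambda I) \oplus (T_{N} - \lambda I)$ exhibits $T - \lambda I$ as the direct sum of a semi-Fredholm operator and a pseudo semi-B-Fredholm operator, whence $T - \lambda I$ is pseudo semi-B-Fredholm; that is, $B_{0} \subset (\sigma_{spbf}(T))^{C}$. Finally, the additivity of the index over such a decomposition (already invoked in Lemma \ref{lem2.4}) yields $\mbox{ind}(T - \lambda I) = \mbox{ind}(T_{M} - \lambda I) + \mbox{ind}(T_{N} - \lambda I) = \mbox{ind}(T_{M}) + 0 = \mbox{ind}(T)$ for every $\lambda \in B_{0}$, which is the desired conclusion.

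I expect no serious obstacle here. The only point needing a little care is justifying that the index of $T - \lambda I$, computed as a pseudo semi-B-Fredholm operator, is genuinely additive over the decomposition and that it agrees with the semi-$g_{z}$-Fredholm index $\mbox{ind}(T) = \mbox{ind}(T_{M})$. Both facts are furnished by Remark \ref{rema2.5} and by the index computation already carried out in Lemma \ref{lem2.4}, so the argument is essentially a one-decomposition specialization of that lemma.
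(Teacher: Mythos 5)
Your proof is correct and is essentially the paper's intended argument: the paper declares this proposition ``obvious'' and leaves it to the reader precisely because it is the one-decomposition specialization of the proof of Lemma \ref{lem2.4}, which is exactly what you carry out (punctured neighborhood theorem on $T_{M}$, generalized Drazin invertibility of $T_{N}-\lambda I$ from $\sigma_{gd}(T_{N})=\mbox{acc}\,\sigma(T_{N})\subset\{0\}$, then additivity of the index over the direct sum). The only cosmetic point is that the fact $\sigma_{gd}(T_{N})=\mbox{acc}\,\sigma(T_{N})$ is Koliha's theorem (used in the paper's Lemma on zeroloid operators) rather than Remark \ref{rema1.2}(ii), but this does not affect the argument.
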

\begin{cor} For every $T \in L(X),$ the following assertions hold.\\
(i)  $\sigma_{ug_{z}f}(T),$ $\sigma_{lg_{z}f}(T),$ $\sigma_{sg_{z}f}(T),$ $\sigma_{g_{z}f}(T),$ $\sigma_{ug_{z}w}(T),$ $\sigma_{lg_{z}w}(T),$ $\sigma_{sg_{z}w}(T)$ and  $\sigma_{g_{z}w}(T)$  are  compact.\\
(ii) If $\Omega$ is a component of $(\sigma_{ug_{z}f}(T))^C$ or  $(\sigma_{lg_{z}f}(T))^C,$ then the index $\mbox{ind}(T -\lambda I)$ is constant as $\lambda$ ranges over $\Omega.$
\end{cor}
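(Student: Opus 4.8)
The plan is to obtain the compactness in (i) from boundedness together with openness of the complements, and then to read off (ii) from the index-stability clause contained in the preceding Proposition.

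For boundedness, note that if $|\lambda|>\|T\|$ then $T-\lambda I$ is invertible, so taking $(X,\{0\})\in\mbox{Red}(T-\lambda I)$ exhibits $T-\lambda I$ as a $g_{z}$-Fredholm operator of index $0$, hence as a $g_{z}$-Weyl operator; thus none of the eight sets meets $\{|\lambda|>\|T\|\}$ and each lies in the closed disc $D(0,\|T\|)$. For closedness I first reduce to the one-sided spectra: directly from the definitions (and Corollary \ref{cor0} for the union identities) one has $\sigma_{sg_{z}f}(T)=\sigma_{ug_{z}f}(T)\cap\sigma_{lg_{z}f}(T)$, $\sigma_{g_{z}f}(T)=\sigma_{ug_{z}f}(T)\cup\sigma_{lg_{z}f}(T)$, and the same two identities for the Weyl spectra. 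Since finite unions and intersections of closed sets are closed, it suffices to prove that $\sigma_{ug_{z}f}(T)$, $\sigma_{lg_{z}f}(T)$, $\sigma_{ug_{z}w}(T)$ and $\sigma_{lg_{z}w}(T)$ are closed.

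For openness of the complement of $\sigma_{ug_{z}f}(T)$, fix $\lambda_{0}$ with $T-\lambda_{0}I$ upper semi-$g_{z}$-Fredholm and invoke the upper-semi analogue of the preceding Proposition: there is $\epsilon>0$ with $B(\lambda_{0},\epsilon)\setminus\{\lambda_{0}\}\subset(\sigma_{upbf}(T))^{C}$ and $\mbox{ind}(T-\lambda I)=\mbox{ind}(T-\lambda_{0}I)$ on that punctured ball. This analogue is obtained by the decomposition argument: choosing $(M,N)\in g_{z}KD(T-\lambda_{0}I)$ with $(T-\lambda_{0}I)_{M}$ upper semi-Fredholm and $(T-\lambda_{0}I)_{N}$ zeroloid, the punctured-neighbourhood theorem for semi-Fredholm operators keeps $(T-\lambda I)_{M}$ upper semi-Fredholm of constant index, while Koliha's theorem \cite{koliha} makes $(T-\lambda I)_{N}$ generalized Drazin invertible for $0<|\lambda-\lambda_{0}|<\epsilon$ (since $\mbox{acc}\,\sigma((T-\lambda_{0}I)_{N})\subset\{0\}$ forces $0\notin\mbox{acc}\,\sigma((T-\lambda I)_{N})$ when $\lambda\neq\lambda_{0}$); absorbing the resulting invertible summand into $M$ displays $T-\lambda I$ as upper pseudo semi-B-Fredholm of index $\mbox{ind}((T-\lambda_{0}I)_{M})$. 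Because every upper pseudo semi-B-Fredholm operator is upper semi-$g_{z}$-Fredholm (a quasi-nilpotent operator is zeroloid), the punctured ball lies in $(\sigma_{ug_{z}f}(T))^{C}$; adjoining $\lambda_{0}$ gives $B(\lambda_{0},\epsilon)\subset(\sigma_{ug_{z}f}(T))^{C}$. The same ball settles the Weyl spectrum, since there $T-\lambda I$ is upper pseudo semi-B-Fredholm with index $\mbox{ind}(T-\lambda_{0}I)\leq 0$, hence upper semi-$g_{z}$-Weyl. The lower cases are dual, via Corollary \ref{cor1.9} applied to $T^{*}$.

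Finally, (ii) follows from the index-stability just used: the computation shows that $\lambda\mapsto\mbox{ind}(T-\lambda I)$ is constant on each ball $B(\lambda_{0},\epsilon)\subset(\sigma_{ug_{z}f}(T))^{C}$ (the centre value being included), so this integer-valued map is locally constant on the open set $(\sigma_{ug_{z}f}(T))^{C}$; a locally constant function on a connected set is constant, whence $\mbox{ind}(T-\lambda I)$ is constant on every component $\Omega$, and symmetrically on the components of $(\sigma_{lg_{z}f}(T))^{C}$. I expect the only delicate point to be the one-sided refinement of the preceding Proposition, namely that the punctured-neighbourhood operator retains its upper (resp.\ lower) semi-Fredholm character and its index; this is precisely where Koliha's decomposition of the zeroloid part and the classical semi-Fredholm punctured-neighbourhood theorem are needed.
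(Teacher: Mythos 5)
Your proof is correct and is essentially the paper's own approach: the paper declares this Corollary (together with the preceding Proposition) ``obvious'' and leaves it to the reader, and the intended argument is precisely yours --- the semi-Fredholm punctured-neighbourhood theorem on the semi-Fredholm summand, plus generalized Drazin invertibility of the perturbed zeroloid summand via Koliha's characterization, with the invertible part absorbed into the first summand, exactly the technique used in the paper's proof of Lemma \ref{lem2.4}. Two small remarks: your identity $\sigma_{sg_{z}w}(T)=\sigma_{ug_{z}w}(T)\cap\sigma_{lg_{z}w}(T)$ rests on reading ``semi-$g_{z}$-Weyl'' as ``upper or lower semi-$g_{z}$-Weyl'' (consistent with Corollary \ref{coracc}, whereas the paper's literal definition would give a union instead --- compactness follows either way), and for the lower spectra it is safer to rerun your argument verbatim with ``lower'' in place of ``upper'' rather than to pass through $T^{*}$ via Corollary \ref{cor1.9}, since transferring a decomposition of $X^{*}$ back to a decomposition of $X$ is not automatic in Banach spaces.
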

\begin{cor}\label{coracc}
  Let $T \in L(X).$ The following statements are equivalent.\\
(i)  $T$ is  semi-$g_{z}$-Weyl {\rm[}resp.,   upper semi-$g_{z}$-Weyl, lower semi-$g_{z}$-Weyl, $g_{z}$-Weyl{\rm]};\\
(ii)  $T$  is $g_{z}$-Kato and  $0 \notin \mbox{acc}\,\sigma_{spbw}(T)$  {\rm[}resp.,   $T$  is $g_{z}$-Kato and $0 \notin \mbox{acc}\,\sigma_{upbw}(T),$ $T$  is $g_{z}$-Kato and $0 \notin \mbox{acc}\,\sigma_{lpbw}(T),$  $T$  is $g_{z}$-Kato and $0 \notin \mbox{acc}\,\sigma_{pbw}(T)${\rm]}.
\end{cor}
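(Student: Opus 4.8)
The plan is to derive this corollary from its Fredholm counterpart, Proposition \ref{prop2.3}, by superimposing onto it the local control of the index furnished by the punctured-neighborhood proposition that immediately precedes the statement. I would carry out the upper case in full and then observe that the lower, two-sided and ``semi'' cases are symmetric: one only replaces the index condition $\mbox{ind}(T)\le 0$ by $\mbox{ind}(T)\ge 0$, $\mbox{ind}(T)=0$, or ``$\le 0$ or $\ge 0$'', and the spectrum $\sigma_{upbw}$ by $\sigma_{lpbw}$, $\sigma_{pbw}$, or $\sigma_{spbw}$, using in addition the identity $\sigma_{g_{z}w}(T)=\sigma_{ug_{z}w}(T)\cup\sigma_{lg_{z}w}(T)$ of Corollary \ref{cor0} together with Remark \ref{rema2.5}(ii) to pass between the two-sided and one-sided notions. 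The whole point is that ``Weyl'' means ``Fredholm with a sign condition on the index'', so once the Fredholm part is settled by Proposition \ref{prop2.3} only the index needs attention.

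For $(i)\Rightarrow(ii)$ I would assume $T$ upper semi-$g_{z}$-Weyl, i.e.\ upper semi-$g_{z}$-Fredholm with $\mbox{ind}(T)\le 0$. The equivalence $(i)\Leftrightarrow(iii)$ of Proposition \ref{prop2.3} (upper bracket) gives that $T$ is $g_{z}$-Kato and $0\notin\mbox{acc}\,\sigma_{upbf}(T)$, so there is $\epsilon_{1}>0$ with $T-\lambda I$ upper pseudo semi-B-Fredholm for all $\lambda\in B(0,\epsilon_{1})\setminus\{0\}$. The preceding proposition then yields $\epsilon_{2}>0$ on which $\mbox{ind}(T-\lambda I)=\mbox{ind}(T)\le 0$. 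Intersecting the two balls, for every $\lambda\in B(0,\epsilon)\setminus\{0\}$ with $\epsilon=\min(\epsilon_{1},\epsilon_{2})$ the operator $T-\lambda I$ is upper pseudo semi-B-Fredholm of index $\le 0$, hence upper pseudo semi-B-Weyl; thus $0\notin\mbox{acc}\,\sigma_{upbw}(T)$.

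For $(ii)\Rightarrow(i)$ I would start from $T$ being $g_{z}$-Kato with $0\notin\mbox{acc}\,\sigma_{upbw}(T)$. Since every upper pseudo semi-B-Weyl operator is upper pseudo semi-B-Fredholm, $\sigma_{upbf}(T)\subset\sigma_{upbw}(T)$, so $0\notin\mbox{acc}\,\sigma_{upbf}(T)$ and Proposition \ref{prop2.3} makes $T$ upper semi-$g_{z}$-Fredholm. It then remains only to pin down the sign of the index: the preceding proposition provides $\epsilon_{2}>0$ with $\mbox{ind}(T-\lambda I)=\mbox{ind}(T)$ on $B(0,\epsilon_{2})\setminus\{0\}$, while the hypothesis provides $\epsilon_{1}>0$ on which $T-\lambda I$ is upper pseudo semi-B-Weyl, so of index $\le 0$. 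Evaluating at any $\lambda$ in the intersection forces $\mbox{ind}(T)\le 0$, whence $T$ is upper semi-$g_{z}$-Weyl.

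The step I expect to need the most care is not a computation but a consistency check: the locally constant integer produced by the preceding proposition must be the \emph{same} index that decides whether $T-\lambda I$ is pseudo semi-B-Weyl. This is exactly guaranteed by Remark \ref{rema2.5}(i), which identifies the semi-$g_{z}$-Fredholm index of $T$ with the ordinary pseudo semi-B-Fredholm index of its small perturbations; without this identification the matching of the two neighborhoods would be merely formal. Everything else is a direct transcription of the argument for Proposition \ref{prop2.3}, which is why the statement may be regarded as routine.
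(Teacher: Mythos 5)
Your proposal is correct and follows exactly the route the paper intends: the paper states Corollary \ref{coracc} without proof ("obvious and left to the reader") as an immediate consequence of Proposition \ref{prop2.3} combined with the punctured-neighborhood proposition that precedes it, which is precisely the argument you carried out, including the necessary appeal to Remark \ref{rema2.5}(i) and Lemma \ref{lem2.4} to identify the locally constant index with the pseudo semi-B-Fredholm index. Nothing is missing; your explicit treatment of the index-consistency point is the only step that genuinely required care.
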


\section{$g_{z}$-invertible operators}
Recall \cite{aiena} that  $T\in L(X)$ is Drazin invertible if and only if there exists an operator  $S \in L(X)$ which commutes with $T$ with $STS=S$ and $T^{n}ST=T^{n}$ for some integer $n\in \N.$ The index of a Drazin invertible operator $T$  is defined by   $i(T):=\mbox{min}\{n\in \N : \exists   S \in L(X) \text{ such that } ST=TS, STS=S \text{ and } T^{n}ST=T^{n}\}.$
\begin{prop}\label{propi} Let $T \in L(X).$ If $p(T)<\infty$ (resp., $q(T)<\infty$)  then $p(T)=\mbox{dis}(T)$ (resp., $q(T)=\mbox{dis}(T)$). Moreover,  if $T$ is Drazin invertible  then $i(T)=\mbox{dis}(T).$
\end{prop}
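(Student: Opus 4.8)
The plan is to reduce everything to the behaviour of the two ``increment'' sequences $\alpha(T_{[n]})$ and $\beta(T_{[n]})$. First I would record the standard identifications
$$\alpha(T_{[n]})=\dim\big(\mathcal{N}(T)\cap\R(T^{n})\big)=\dim\big(\mathcal{N}(T^{n+1})/\mathcal{N}(T^{n})\big),\qquad \beta(T_{[n]})=\dim\big(\R(T^{n})/\R(T^{n+1})\big),$$
the first coming from the isomorphism $x+\mathcal{N}(T^{n})\mapsto T^{n}x$ between $\mathcal{N}(T^{n+1})/\mathcal{N}(T^{n})$ and $\mathcal{N}(T)\cap\R(T^{n})$. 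These give $p(T)=\mbox{inf}\{n:\alpha(T_{[n]})=0\}$ and $q(T)=\mbox{inf}\{n:\beta(T_{[n]})=0\}$, together with the familiar fact that once $\mathcal{N}(T^{n})=\mathcal{N}(T^{n+1})$ (resp. $\R(T^{n})=\R(T^{n+1})$) the kernel (resp. range) chain is stationary from $n$ on.

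For the ascent statement, suppose $p:=p(T)<\infty$. Then $\alpha(T_{[r]})=0$ for every $r\geq p$, so $p\in\Delta(T)$ and hence $\mbox{dis}(T)\leq p$. Conversely, put $m:=\mbox{dis}(T)$; since $m\in\Delta(T)$ we have $\alpha(T_{[m]})=\alpha(T_{[r]})$ for all $r\geq m$, and choosing any $r\geq p$ forces $\alpha(T_{[m]})=0$, i.e. $\mathcal{N}(T^{m})=\mathcal{N}(T^{m+1})$ and thus $p\leq m$. Combining the two inequalities yields $p(T)=\mbox{dis}(T)$. This half is entirely routine.

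For the descent statement the easy inequality is analogous: if $q:=q(T)<\infty$ then $\R(T^{r})=\R(T^{q})$ for all $r\geq q$, so $\alpha(T_{[r]})=\dim\big(\mathcal{N}(T)\cap\R(T^{q})\big)$ is constant on $r\geq q$, whence $q\in\Delta(T)$ and $\mbox{dis}(T)\leq q$. The reverse inequality is the real point, and I expect it to be the main obstacle: I must show that $\alpha(T_{[m]})$ cannot become stationary strictly before the range chain does, i.e. that $m<q$ forces $m\notin\Delta(T)$. The natural route is to fix $m<q$, use the induced surjection $T\colon\R(T^{m})\to\R(T^{m+1})$ with kernel $\mathcal{N}(T)\cap\R(T^{m})$ to compare $\alpha(T_{[m]})$ and $\alpha(T_{[m+1]})$, and thereby obtain a genuine change in the sequence; alternatively, to pass to the adjoint and invoke the classical ascent--descent duality (in the spirit of Lemma \ref{lem1.8} and \cite{aiena}), reducing to the ascent case for $T^{*}$. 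The delicate points, which I would have to address carefully, are the passage to $T^{*}$ for possibly non-closed iterated ranges and the behaviour when the increments $\alpha(T_{[m]})$ are infinite, where equality of dimensions must be upgraded to equality of the underlying subspaces for the argument to go through.

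Finally, for the Drazin index, I would recall that a Drazin invertible $T$ admits $(M,N)\in\mbox{Red}(T)$ with $T_{M}$ invertible and $T_{N}$ nilpotent of some degree $d$; consequently $p(T)=q(T)=d<\infty$, and taking $S=(T_{M})^{-1}\oplus 0_{N}$ shows that $T^{n}ST=T^{n}$ holds exactly when $T_{N}^{n}=0$, so $i(T)=d$ by the very definition of the Drazin index. Applying the already established equality $p(T)=\mbox{dis}(T)$ then gives $i(T)=p(T)=\mbox{dis}(T)$, which closes the argument.
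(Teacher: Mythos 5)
Your ascent half is complete and is essentially the paper's own argument: $p(T)\in\Delta(T)$ gives $\mbox{dis}(T)\leq p(T)$, and evaluating the constancy of $\alpha(T_{[n]})$ for $n\geq \mbox{dis}(T)$ at some $r\geq p(T)$ gives $\mathcal{N}(T)\cap\R(T^{\mbox{\scriptsize dis}(T)})=\{0\}$, hence $p(T)\leq \mbox{dis}(T)$. The genuine gap is the descent statement: the inequality $q(T)\leq\mbox{dis}(T)$ is exactly the step you leave open, since you only name two candidate routes and their obstacles without carrying either out. The paper closes this step with a tool you never invoke. Writing $d=\mbox{dis}(T)$, it uses the dual stabilization property $\R(T)+\mathcal{N}(T^{d})=\R(T)+\mathcal{N}(T^{m})$ for every $m\geq d$ (this rests on the classical Kato--Kaashoek duality $\mbox{dim}\big[(\mathcal{N}(T)\cap\R(T^{n}))/(\mathcal{N}(T)\cap\R(T^{n+1}))\big]=\mbox{dim}\big[(\R(T)+\mathcal{N}(T^{n+1}))/(\R(T)+\mathcal{N}(T^{n}))\big]$, applied at the level of subspaces), together with the classical equivalence $q(T)\leq n \Longleftrightarrow X=\R(T)+\mathcal{N}(T^{n})$. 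Then $X=\R(T)+\mathcal{N}(T^{q(T)})=\R(T)+\mathcal{N}(T^{d})$, i.e.\ $T_{[d]}$ is surjective and $q(T)\leq d$. Note that your first route (extracting a ``genuine change'' in $\alpha(T_{[m]})$ whenever $\R(T^{m+1})\subsetneq\R(T^{m})$) cannot work at the level of dimensions alone: when the increments are infinite, the cardinals $\alpha(T_{[m]})$ can stay constant while the subspaces $\mathcal{N}(T)\cap\R(T^{m})$ keep strictly decreasing, so nothing in the $\alpha$-sequence detects the strict inclusion of ranges; what is needed is stabilization of the subspaces themselves, fed into the duality above --- precisely the ``delicate point'' you flag but do not resolve. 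Your second route (passing to $T^{*}$) founders on the fact that $\mathcal{N}((T^{*})^{n})=\R(T^{n})^{\perp}$ only sees the closures of the iterated ranges, so the ascent of $T^{*}$ controls the stabilization of $\overline{\R(T^{n})}$, not of $\R(T^{n})$.

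The Drazin-index part also has a gap, though a smaller and easily repaired one. With $S=(T_{M})^{-1}\oplus 0_{N}$ you show that the minimal $n$ with $T^{n}ST=T^{n}$ \emph{for this particular} $S$ is the nilpotency degree $d$, which only yields $i(T)\leq d$; since $i(T)$ is a minimum over \emph{all} commuting pseudo-inverses, you must still rule out that some other $S'$ achieves a smaller $n$. This follows from the observation that $TS'=S'T$, $S'TS'=S'$ and $T^{n}S'T=T^{n}$ force $T^{n}=T^{n+1}S'$ (so $q(T)\leq n$) and $T^{n}=S'T^{n+1}$ (so $p(T)\leq n$), whence $n\geq\max\{p(T),q(T)\}=d$; the paper sidesteps this by simply quoting \cite[Theorem 1.134]{aiena} and then applying the already-proved equality $p(T)=\mbox{dis}(T)$.
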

\begin{proof} Suppose that $p(T)<\infty,$ then   $\mathcal{N}(T_{[n]})=\{0\}$ for every integer $n\geq p(T).$    This  implies   that   $\mathcal{N}(T_{[d]})=\{0\};$ where $d:=\mbox{dis}(T).$    Thus  $p(T)\leq d,$ and since we always  have  $d\leq \mbox{min}\{p(T),q(T)\}$  then  $p(T)=d.$ If $q(T)<\infty$ then $X=\R(T)+\mathcal{N}(T^{n})$ for every integer $n\geq q(T).$  Since   $\R(T)+\mathcal{N}(T^{d})=\R(T)+\mathcal{N}(T^{m})$  for every integer $m\geq d,$ then    $X=\R(T)+\mathcal{N}(T^{d}).$   Hence $T_{[d]}$ is surjective and  consequently  $q(T)=d.$ If in addition $T$ is Drazin invertible, then the  proof of the equality desired   is an immediate consequence of    \cite[Theorem 1.134]{aiena}.
\end{proof}
\begin{dfn}\label{dfn.i3} We say that $T$ is quasi left Drazin invertible (resp., quasi right Drazin invertible)  if there exists $(M,N) \in KD(T)$ such that $T_{M}$ is bounded below (resp., surjective).
\end{dfn}
\begin{prop}\label{prop.i4} Let $T \in L(X).$  The following hold.\\
(i)  $T$ is Drazin invertible if and only if $T$ is  quasi left and  quasi right Drazin invertible.\\
(ii) If $T$ is quasi left Drazin invertible then $T$ is left Drazin invertible.\\
(iii) If $T$ is quasi right Drazin invertible then $T$ is right Drazin invertible.\\
 Furthermore,   the converses of (ii) and (iii) are   true in the case  of  Hilbert space.
\end{prop}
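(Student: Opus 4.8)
The plan is to prove the three implications separately, then handle the Hilbert-space converses. For (i), the forward direction is essentially the Drazin decomposition: if $T$ is Drazin invertible then there exists $(M,N)\in \mbox{Red}(T)$ with $T_M$ invertible and $T_N$ nilpotent (recalled in the introduction). Since an invertible operator is simultaneously bounded below and surjective, the same pair $(M,N)$ witnesses both quasi left and quasi right Drazin invertibility. For the converse, suppose $T$ is both quasi left and quasi right Drazin invertible. The subtlety is that the two Kato-type decompositions need not coincide; the key step will be to show they produce the same $\tilde p(T)$ and $\tilde q(T)$ quantities. I would invoke Proposition \ref{prop1.6} (or the surrounding invariance results) to see that $\alpha(T_M)$ and $\beta(T_M)$ are independent of the chosen $g_z$-Kato decomposition — and since a Kato-type decomposition $(M,N)\in KD(T)$ is in particular a $g_z$-Kato decomposition, quasi left Drazin invertibility forces $\tilde\alpha(T)=\alpha(T_M)=0$ and quasi right Drazin invertibility forces $\tilde\beta(T)=\beta(T_M)=0$. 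Then $T_M$ is both bounded below and surjective for a \emph{single} Kato-type pair, hence invertible, and with $T_N$ nilpotent this gives exactly the Drazin decomposition, so $T$ is Drazin invertible.

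For (ii), assume $T$ is quasi left Drazin invertible, so there is $(M,N)\in KD(T)$ with $T_M$ bounded below and $T_N$ nilpotent of some degree $d$. The plan is to show $T$ is upper semi-B-Fredholm with $p(T_{[m_T]})<\infty$. Since $T_M$ bounded below means $\alpha(T_M)=0$ and $\R(T_M)$ closed, and $T_N$ is nilpotent (hence has finite ascent), I would compute the relevant spectral quantities on the range space $\R(T^n)$ for large $n$: because $T_N^d=0$, for $n\geq d$ the restriction $T_{[n]}$ is governed entirely by the semi-regular piece $T_M$, giving $\alpha(T_{[n]})=\alpha(T_M)=0$ with closed range. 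This yields $p_e(T)<\infty$ and $\R(T^{p_e(T)+1})$ closed (upper semi-B-Fredholm) together with $p(T_{[m_T]})<\infty$, which is precisely left Drazin invertibility. Statement (iii) is the dual assertion and goes identically, replacing $\alpha$, bounded below, and ascent by $\beta$, surjective, and descent throughout.

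The main obstacle — and the only genuinely nontrivial part — is the Hilbert-space converse of (ii) and (iii). Here I would use Proposition \ref{prop.i4} of \ref{thm1.11}-style arguments: on a Hilbert space, Mbekhta's construction (cited via \cite[Theorem 2.2]{mbekhta} in the proof of Proposition \ref{thm1.11}) refines a quasi-Fredholm/left-Drazin situation into an honest Kato-type decomposition. Concretely, if $T$ is left Drazin invertible then $T$ is upper semi-B-Fredholm with $p(T_{[m_T]})<\infty$; the Hilbert-space structure lets me extract $(M,N)\in KD(T)$ with $T_M$ semi-regular and $T_N$ nilpotent, and the finite-ascent condition forces $\alpha(T_M)=0$, so $T_M$ is bounded below, giving quasi left Drazin invertibility. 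The dual statement gives (iii). I expect the delicate point to be verifying that the finiteness of the ascent on the range space transfers to the vanishing of $\alpha(T_M)$ for the \emph{semi-regular} part of the Hilbert-space Kato decomposition, which is exactly where the self-duality of Hilbert space (orthogonal complementability of every closed invariant subspace) is indispensable and where the argument would fail for general Banach $X$.
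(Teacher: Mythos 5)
Your proposal is correct and follows essentially the same route as the paper's proof: the invariance of $\tilde{\alpha}(T)$ and $\tilde{\beta}(T)$ from Proposition \ref{prop1.6} for the converse in (i), the observation that $T_{[n]}=(T_{M})_{[n]}$ is bounded below (resp.\ surjective) for $n\geq d$ in (ii)--(iii), and, for the Hilbert-space converses, the Kato-type decomposition available for semi-B-Fredholm operators on Hilbert spaces followed by the upgrade ``semi-regular with finite ascent is bounded below.'' The only difference is bibliographic --- the paper routes the Hilbert-space step through \cite[Theorem 2.6]{berkani-sarih}, \cite[Corollary 3.7]{aznay-ouahab-zariouh2} and \cite[Lemma 2.17]{aznay-ouahab-zariouh} rather than Mbekhta's construction --- not mathematical.
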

\begin{proof} (i) Assume  that $T$ is Drazin invertible, then $n:=p(T)=q(T)<\infty.$ It is well known that $(\R(T^{n}),\mathcal{N}(T^{n}))\in \mbox{Red}(T),$  $T_{\R(T^{n})}$ is invertible and $T_{\mathcal{N}(T^{n})}$ is nilpotent. So $T$ is  quasi left  and quasi right Drazin invertible. Conversely, if $T$ is  quasi left and  quasi right Drazin invertible  then $\tilde{\alpha}(T)=\tilde{\beta}(T)=0.$ Therefore $\alpha(T_{M})=\tilde{\alpha}(T)=\tilde{\beta}(T)=\beta(T_{M})=0,$ for every  $(M,N) \in KD(T).$  Thus  $T$ is Drazin invertible.\\
(ii) Let $(M,N) \in \mbox{Red}(T)$ such that $T_{M}$ is bounded below and $T_{N}$ is nilpotent of degree $d.$ As a bounded below operator is semi-regular,   we deduce from \cite[Theorem 2.21]{aznay-ouahab-zariouh2}  that $d=\mbox{dis}(T).$ Clearly,  $\R(T^{n})$ is closed and $T_{[n]}=(T_{M})_{[n]}$ is bounded below for every integer $n \geq d.$  Hence    $T$ is left Drazin invertible. Conversely,  assume that  $T$ is left Drazin invertible Hilbert space operator. Then  $T$ is upper semi-B-Fredholm,  which entails  from \cite[Theorem 2.6]{berkani-sarih} and \cite[Corollary 3.7]{aznay-ouahab-zariouh2} that there exists $(M,N) \in KD(T)$ such that  $T_{M}$ is upper  semi-Browder. Using  \cite[Lemma 2.17]{aznay-ouahab-zariouh}, we conclude  that   $T_{M}$ is bounded below and then $T$ is quasi left Drazin invertible.\\
(iii) Goes similarly with   (ii).
\end{proof}
\begin{prop}\label{prop.i5}  $T \in L(X)$ is an upper semi-Browder  {\rm[}resp., a lower semi-Browder{\rm]} if and only if  $T$ is a quasi left Drazin invertible  {\rm[}resp.,  a quasi right Drazin invertible{\rm]} and    $\mbox{dim}\,N<\infty$ for every  (or for some)  $(M,N) \in KD(T).$
\end{prop}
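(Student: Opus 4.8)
The statement characterizes upper (resp.\ lower) semi-Browder operators as exactly those quasi left (resp.\ quasi right) Drazin invertible operators whose Kato-type decomposition has a \emph{finite-dimensional} nilpotent part $N$. The plan is to prove the two implications of the upper-semi-Browder equivalence and then transpose the lower-semi-Browder case by the duality already built into Lemma \ref{lem1.8} (or by the symmetric argument). Throughout I would exploit Proposition \ref{prop.i4}(ii): a quasi left Drazin invertible operator is left Drazin invertible, hence upper semi-B-Fredholm, and its essential degree $m_T$ coincides with the nilpotency degree $d$ of $T_N$.

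For the forward direction, suppose $T$ is upper semi-Browder. By definition this means $T$ is upper semi-Fredholm (so $\alpha(T)<\infty$ and $\R(T)$ closed) with $p(T)<\infty$, and has essential degree $m_T=0$. First I would produce a decomposition: since $p(T)<\infty$ and $T$ is upper semi-Fredholm, Proposition \ref{propi} gives $p(T)=\mathrm{dis}(T)=:d<\infty$, and the Kato decomposition for semi-Fredholm operators (the Kato theorem quoted in the introduction) furnishes $(M,N)\in KD(T)$ with $T_M$ semi-regular and $T_N$ nilpotent of degree $d$. Because $T$ is upper semi-Fredholm with finite ascent, $T_M$ is then bounded below (semi-regular with $\alpha(T_M)=0$), so $T$ is quasi left Drazin invertible. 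The key remaining point is that $\dim N<\infty$: since $T_N$ is nilpotent and $\dim N<\infty$ is equivalent to $\alpha(T_N)<\infty$ accumulated over the finitely many levels, one reads off $\alpha(T)=\alpha(T_M)+\alpha(T_N)=\alpha(T_N)$, and $\alpha(T)<\infty$ together with nilpotency of degree $d$ forces $\dim N=\sum_{k=1}^{d}\dim\bigl(\mathcal{N}(T_N^{k})/\mathcal{N}(T_N^{k-1})\bigr)<\infty$.

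For the converse, assume $T$ is quasi left Drazin invertible and $\dim N<\infty$ for some $(M,N)\in KD(T)$ with $T_M$ bounded below and $T_N$ nilpotent. Then $T_N$ is a nilpotent operator on a finite-dimensional space, hence $\alpha(T_N)<\infty$, and $T_M$ bounded below gives $\alpha(T_M)=0$ with $\R(T_M)$ closed; adding the (finite-dimensional, hence closed) piece $N$ shows $\alpha(T)=\alpha(T_N)<\infty$ and $\R(T)=\R(T_M)\oplus\R(T_N)$ is closed, so $T$ is upper semi-Fredholm. Finiteness of ascent follows because $p(T_M)=0$ (bounded below is injective) and $p(T_N)=d<\infty$, whence $p(T)=\max\{p(T_M),p(T_N)\}<\infty$; thus $T$ is upper semi-Browder with $m_T=0$.

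\textbf{The main obstacle.} The delicate step is not the bounded-below/nilpotent bookkeeping but confirming that the condition ``$\dim N<\infty$ for every (or for some) $(M,N)\in KD(T)$'' is genuinely well posed---i.e.\ that the two quantifiers agree. This is where I expect the real work: one must show that once $T$ is quasi left Drazin invertible, the dimension of the nilpotent part is an invariant of $T$ (finite for one decomposition iff finite for all), which I would derive from Proposition \ref{prop1.6} (invariance of $\alpha(T_M),\beta(T_M),p(T_M),q(T_M)$ across decompositions) applied in the Kato-type setting, combined with the relation $\dim N=\alpha(T)-\alpha(T_M)$ that the punctured-neighbourhood analysis of Theorem \ref{thm1.5} makes decomposition-independent. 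The lower semi-Browder statement is then immediate by dualizing via Corollary \ref{cor1.9} and Lemma \ref{lem1.8}, replacing $\alpha$ by $\beta$, bounded below by surjective, and ascent by descent.
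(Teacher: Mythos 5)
Your proposal is correct, and its skeleton matches the paper's proof: both directions rest on producing a Kato-type decomposition whose semi-regular part is bounded below, plus the observation that a nilpotent operator with finite-dimensional kernel acts on a finite-dimensional space, with the converse a direct verification. The differences are in the supporting tools. The paper imports its decomposition from companion papers (\cite[Corollary 3.7]{aznay-ouahab-zariouh2} produces $(M,N)\in KD(T)$ with $T_{M}$ upper semi-Browder, and \cite[Lemma 2.17]{aznay-ouahab-zariouh} upgrades $T_{M}$ to bounded below), whereas you use the classical Kato theorem together with the standard fact that a semi-regular operator with finite ascent is injective; your route is more self-contained and elementary. For the ``for every $(M,N)\in KD(T)$'' half, the paper's argument is lighter than yours: for an arbitrary $(A,B)\in KD(T)$ one has $\mathcal{N}(T_{B})\subseteq\mathcal{N}(T)$, so $\alpha(T_{B})\le\alpha(T)<\infty$, and the nilpotent fact yields $\dim B<\infty$ at once, with no invariance theorem needed. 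Your detour through Proposition \ref{prop1.6} does work (Kato-type decompositions are in particular $g_{z}$-Kato decompositions, since nilpotent operators are zeroloid), and your explicit attention to the quantifier issue is to your credit: in the converse, the decomposition witnessing quasi left Drazin invertibility could a priori differ from the one with $\dim N<\infty$, a point the paper passes over with ``the converse is obvious'' and which your invariance argument genuinely bridges.

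One correction: the relation you invoke, $\dim N=\alpha(T)-\alpha(T_{M})$, is false as an equation (a nilpotent operator of degree $2$ on a two-dimensional space has a one-dimensional kernel). What is true, and what your argument actually needs, is $\alpha(T_{N})=\alpha(T)-\alpha(T_{M})$ together with the equivalence $\dim N<\infty\Longleftrightarrow\alpha(T_{N})<\infty$ for nilpotent $T_{N}$; the latter is exactly the kernel-counting fact $\dim N=\sum_{k=1}^{d}\dim\bigl(\mathcal{N}(T_{N}^{k})/\mathcal{N}(T_{N}^{k-1})\bigr)\le d\,\alpha(T_{N})$ that you already established in the forward direction, so the repair is immediate and does not affect the rest of the proof.
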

\begin{proof} Suppose that $T$ is an upper semi-Browder, then $T$ is upper semi-Fredholm. From \cite[Corollary 3.7]{aznay-ouahab-zariouh2}, there exists $(M,N) \in KD(T)$  such that  $T_{M}$ is  upper semi-Browder.  It follows  from \cite[Lemma 2.17]{aznay-ouahab-zariouh} that $T_{M}$  is bounded below. Let $(A,B) \in KD(T)$ be arbitrary. Since a nilpotent operator $S$ acting on a Banach space $Y$ is semi-Fredholm if and only if $\mbox{dim}\,Y<\infty,$ then $\mbox{dim}\,B<\infty.$  The converse is obvious. The other case goes similarly.
\end{proof}
\begin{dfn}\label{dfn.i1}   $T \in L(X)$ is said to be left $g_{z}$-invertible (resp., right $g_{z}$-invertible)  operator    if there exists $(M, N) \in g_{z}KD(T)$ such that  $T_{M}$ is bounded below (resp.,   surjective). $T$ is called $g_{z}$-invertible if it is left  and right $g_{z}$-invertible.
\end{dfn}

\begin{rema}\label{rema.i2}
(i) It is clear    that $T$ is  $g_{z}$-invertible  if and only if there exists $(M, N) \in g_{z}KD(T)$ such that  $T_{M}$ is invertible.\\
(ii) Every    generalized Drazin-meromorphic  invertible operator is  $g_{z}$-invertible.
\end{rema}
We prove in the following results  that the class of $g_{z}$-invertible operators   preserves some  properties  of the  of Drazin invertibility    \cite{drazin,koliha}.
\begin{thm}\label{thm.i6} Let $T\in L(X).$ The following statements are equivalent.\\
(i) $T$ is $g_{z}$-invertible;\\
(ii) $T$ is $g_{z}$-Browder;\\
(iii) There exists $(M,N) \in g_{z}KD(T)$ such that  $T_{M}$ is Drazin invertible;\\
(iv) There exists  a Drazin invertible operator $S \in L(X)$  such that $T S = ST,$ $STS = S$ and  $T^{2}S - T$ is zeroloid.   A  such $S$ is called a  $g_{z}$-inverse of $T;$\\
(v) There exists a bounded projection $P$ on $X$ which   commutes with $T,$  $T + P$ is generalized Drazin invertible  and $TP$  is zeroloid;  \\
(vi)  There exists a bounded projection $P$ on $X$    commuting with $T$ such that   there exist $U,V\in L(X)$  which satisfy   $P=TU=VT$ and   $T(I-P)$ is zeroloid;\\
(vii) $T$ is $g_{z}$-Kato and $\tilde{p}(T)=\tilde{q}(T)<\infty.$
\end{thm}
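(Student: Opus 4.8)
The plan is to prove the theorem by arranging the seven conditions into a single cycle of implications, handling the four ``structural'' conditions (i), (ii), (iii), (vii) first and then grafting on the three ``operator-equation'' conditions (iv), (v), (vi). Throughout, whenever a generalized Kato zeroloid decomposition is available I would fix a pair $(M,N)\in g_{z}KD(T)$ and exploit the splitting $T=T_{M}\oplus T_{N}$ with $T_{M}$ semi-regular and $T_{N}$ zeroloid, together with additivity of spectra under reducing direct sums and Remark \ref{rema1.2}(iv) (a direct sum is zeroloid iff each summand is).

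For the structural cycle I would run (i)$\Rightarrow$(iii)$\Rightarrow$(ii)$\Rightarrow$(vii)$\Rightarrow$(i). The first step is immediate from Remark \ref{rema.i2}(i), since an invertible operator is Drazin invertible. For (iii)$\Rightarrow$(ii), writing $k=p(T_{M})=q(T_{M})$ I would split $T_{M}$ along $(\R(T_{M}^{k}),\mathcal{N}(T_{M}^{k}))$ into an invertible summand and a nilpotent summand, and absorb the nilpotent summand into $T_{N}$; since a nilpotent operator and a zeroloid operator have zeroloid direct sum, $T$ becomes the direct sum of an invertible (hence Browder) operator and a zeroloid operator. For (ii)$\Rightarrow$(vii) I would use that a Browder operator is Drazin invertible, split it as invertible $\oplus$ nilpotent, and thereby exhibit a pair $(M,N)\in g_{z}KD(T)$ with $T_{M}$ invertible, so that $\tilde{p}(T)=\tilde{q}(T)=0$. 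The closing step (vii)$\Rightarrow$(i) is where Proposition \ref{propi} does the work: since $T_{M}$ is semi-regular we have $\mbox{dis}(T_{M})=0$, hence $\tilde{p}(T)=p(T_{M})<\infty$ forces $p(T_{M})=0$ and likewise $q(T_{M})=0$, making $T_{M}$ injective and surjective, i.e. invertible, which gives $g_{z}$-invertibility.

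To attach the operator-equation conditions I would prove (i)$\Rightarrow$(iv)$\Rightarrow$(vi)$\Rightarrow$(i) and, separately, (i)$\Leftrightarrow$(v). For (i)$\Rightarrow$(iv) I would take $(M,N)\in g_{z}KD(T)$ with $T_{M}$ invertible and set $S=(T_{M})^{-1}\oplus 0_{N}$; a direct computation gives $TS=ST$, $STS=S$, and $T^{2}S-T=0_{M}\oplus(-T_{N})$, which is zeroloid, while $S$ is a direct sum of an invertible and a nilpotent operator, hence Drazin invertible. For (iv)$\Rightarrow$(vi) I would set $P=TS=ST$ and check, using $STS=S$, that $P^{2}=P$, that $P$ commutes with $T$, that $P=TU=VT$ with $U=V=S$, and that $T(I-P)=-(T^{2}S-T)$ is zeroloid. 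For (vi)$\Rightarrow$(i) I would pass to $M=\R(P)$, $N=\mathcal{N}(P)$, note $(M,N)\in\mbox{Red}(T)$, deduce $T_{N}$ zeroloid from $T(I-P)$ zeroloid, and --- the key move --- replace $U,V$ by $PUP,PVP$ so that the relations $TU=P=VT$ restrict to one-sided inverses of $T_{M}$ on $M$, forcing $T_{M}$ to be simultaneously surjective and bounded below, hence invertible. Finally, for (i)$\Rightarrow$(v) I would take $P$ the projection onto $N$ along $M$, so that $TP=0_{M}\oplus T_{N}$ is zeroloid and $T+P=T_{M}\oplus(T_{N}+I_{N})$ satisfies $0\notin\mbox{acc}\,\sigma(T+P)$ (because $T_{M}$ is invertible and $\mbox{acc}\,\sigma(T_{N}+I_{N})\subset\{1\}$), whence $T+P$ is generalized Drazin invertible by Koliha's criterion; conversely, for (v)$\Rightarrow$(i), with $M=\mathcal{N}(P)$, $N=\R(P)$ I would read off $T_{N}$ zeroloid from $TP$ zeroloid and $0\notin\mbox{acc}\,\sigma(T_{M})$ from the generalized Drazin invertibility of $T+P$, then unfold $T_{M}$ into invertible $\oplus$ quasi-nilpotent and absorb the quasi-nilpotent part into the zeroloid summand to recover a $g_{z}$-Kato decomposition with invertible semi-regular part.

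The step I expect to be the main obstacle is (vi)$\Rightarrow$(i): the hypotheses only furnish abstract one-sided inverses $U,V$ acting on all of $X$, and the delicate point is the conjugation $U\mapsto PUP$, $V\mapsto PVP$, which is needed to confine these inverses to $M$ and thereby upgrade $P=TU=VT$ into genuine two-sided invertibility of the restriction $T_{M}$; verifying that $PUP$ and $PVP$ still satisfy $T(PUP)=P=(PVP)T$ uses repeatedly that $P$ commutes with $T$ and that $P^{2}=P$. A comparably careful point is the unfolding in (v)$\Rightarrow$(i), where one must split the generalized Drazin invertible restriction $T_{M}$ into its invertible and quasi-nilpotent parts and recombine the quasi-nilpotent part with the already-zeroloid complement using Remark \ref{rema1.2}(iv).
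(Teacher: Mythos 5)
Your proof is correct, and in two places it genuinely diverges from (and improves on) the paper's own argument. For the structural equivalences the paper does not re-derive anything: it gets (i)$\Leftrightarrow$(ii) and (i)$\Leftrightarrow$(iii) by invoking Propositions \ref{prop.i4} and \ref{prop.i5} (quasi left/right Drazin invertibility versus semi-Browder operators), and dismisses (i)$\Leftrightarrow$(vii) as clear; your cycle (i)$\Rightarrow$(iii)$\Rightarrow$(ii)$\Rightarrow$(vii)$\Rightarrow$(i), built from the invertible-plus-nilpotent splitting of a Drazin invertible restriction, absorption of the nilpotent piece into the zeroloid summand via Remark \ref{rema1.2}(iv), and Proposition \ref{propi} to force $p(T_M)=q(T_M)=0$ for a semi-regular $T_M$, is more self-contained and uses only Proposition \ref{prop1.6} for the well-definedness of $\tilde p,\tilde q$. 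For the operator-equation conditions your constructions for (i)$\Rightarrow$(iv), (iv)$\Rightarrow$(vi) and (i)$\Leftrightarrow$(v) coincide with the paper's ($S=(T_M)^{-1}\oplus 0_N$, $P=TS$, $P=0_M\oplus I_N$, and the unfolding of the generalized Drazin invertible part into invertible $\oplus$ quasi-nilpotent); you route (iv)$\Rightarrow$(i) through (vi) instead of the paper's direct argument with $M=\R(TS)$, $N=\mathcal{N}(TS)$, which is equally valid (and, like the paper's, never actually needs the Drazin invertibility of $S$). The most substantive difference is (vi)$\Rightarrow$(i): the paper writes $I_M\oplus 0_N=T_MU_M\oplus T_NU_N=V_MT_M\oplus V_NT_N$, tacitly assuming that $U$ and $V$ decompose relative to $(\R(P),\mathcal{N}(P))$, which is not given since $U,V$ need not commute with $P$. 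Your compression $U\mapsto PUP$, $V\mapsto PVP$ (using only $TP=PT$ and $P^2=P$ to verify $T(PUP)=P=(PVP)T$) repairs exactly this lacuna, so on that step your argument is the rigorous one.
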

\begin{proof}  The  equivalences (i) $\Longleftrightarrow$ (ii)  and (i) $\Longleftrightarrow$ (iii) are  immediate  consequences  of  Propositions  \ref{prop.i4} and \ref{prop.i5}.\\
(i) $\Longleftrightarrow$ (iv)  Assume that $T$ is $g_{z}$-invertible and let  $(M, N) \in g_{z}KD(T)$ such that    $T_{M}$ is  invertible.   The operator  $S=(T_{M})^{-1}\oplus 0_{N}$  is Drazin invertible. Moreover,   $TS = ST=I_{M}\oplus 0_{N},$ $STS = S$ and $T^{2}S - T=0_{M}\oplus (-T_{N}).$ As $T_{N}$ is zeroloid, then  $T^{2}S - T$ is also zeroloid. Conversely, suppose that there exists a Drazin invertible  operator $S$ such that $T S = ST,$ $STS = S$ and  $T^{2}S - T$ is zeroloid.  So   $TS$ is a projection.  If we take  $M=\R(TS)$ and  $N=\mathcal{N}(TS),$ then $(M,N)\in \mbox{Red}(T)\cap Red(S).$  Moreover, if $x\in \mathcal{N}(T_{M})$ then $x=TSy$ and $Tx=0,$ thus $x=(TS)^2y=STx=0.$  Moreover,   $\R(T_{M})=M$ and then   $T_{M}$ is  invertible.  Let us to show that $S=(T_{M})^{-1}\oplus 0_{N}.$ We have   $S_{N}=0_{N},$ since   $S=STS.$    Let $x=TSy \in M,$  as $Sy=STSy \in M$ then $Sx=Sy=(T_{M})^{-1}T_{M}Sy=(T_{M})^{-1}x.$ Hence $S=(T_{M})^{-1}\oplus 0$  and  $T^{2}S - T=0\oplus T_{N}.$ This  implies  that  $T_{N}$ is zeroloid and then  $T$  is  $g_{z}$-invertible.\\
(i) $\Longleftrightarrow$ (v) Suppose that there exists a bounded projection $P$ on $X$ which   commutes with $T,$  $T + P$ is generalized Drazin invertible  and $TP$  is zeroloid.  Then $(A,B):=(\mathcal{N}(P),\R(P))\in \mbox{Red}(T),$  $T_{A}=(T+P)_{A}$ is generalized Drazin invertible   and $T_{B}=(TP)_{B}$ is zeroloid.  Thus  there exists $(C,D) \in Red(T_{A})$ such that $T_{C}$ is invertible and $T_{D}$ is quasi-nilpotent. Hence  $(C,D\oplus B)\in g_{z}KD(T)$  and then  $T$ is $g_{z}$-invertible. Conversely,   let $(M,N) \in g_{z}KD(T)$ such that $T_{M}$ is invertible. Clearly,   $P:=0_{M} \oplus I_{N}$  is a projection and $TP=PT.$ Furthermore, $TP= 0_{M} \oplus T_{N}$  is zeroloid and $T+P=T_{M}\oplus (T+I)_{N}$ is generalized Drazin invertible, since  $-1\notin \mbox{acc}\,\sigma(T_{N})=\sigma_{gd}(T_{N}).$\\
(vi)   $\Longrightarrow$  (i)  Suppose that there exists a bounded projection $P$ on $X$    commuting with $T$ such that   there exist $U,V\in L(X)$  which satisfy   $P=TU=VT$ and   $T(I-P)$ is zeroloid.   Then   $I_{M}\oplus 0_{N}=T_{M}U_{M}\oplus T_{N}U_{N}=V_{M}T_{M}\oplus V_{N}T_{N};$  where   $(M,N):=(\R(P),\mathcal{N}(P))\in \mbox{Red}(T),$ and thus    $T_{M}U_{M}=V_{M}T_{M}=I_{M}$ and $T_{N}U_{N}=V_{N}T_{N}=0_{N}.$ Hence $T_{M}$ is invertible. Moreover,  $T_{N}$ is zeroloid, since  $T(I-P)=0\oplus T_{N}$  is zeroloid. Consequently, $T$ is $g_{z}$-invertible.\\
(iv) $\Longrightarrow$ (vi)   and  (i) $\Longleftrightarrow$ (vii) are clear.
\end{proof}
The two following theorems are analogous to the preceding one and  we give them without proof.
\begin{thm}\label{thm.i7} Let $T\in L(X).$ The following statements are equivalent.\\
(i) $T$ is left $g_{z}$-invertible;\\
(ii) $T$ is upper semi-$g_{z}$-Browder;\\
(iii) There exists $(M,N) \in g_{z}KD(T)$ such that $T_{M}$ is quasi right  Drazin invertible;\\
(v) There exists a bounded projection $P$ on $X$ which commutes with $T,$ $T + P$ is left generalized Drazin invertible  and $TP$  is zeroloid;\\
(vi) $T$ is $g_{z}$-Kato and $\tilde{p}(T)=0;$\\
(vii) $T$ is $g_{z}$-Kato and $0\not\in\mbox{acc}\,\sigma_{lgd}(T).$
\end{thm}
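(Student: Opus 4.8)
The plan is to transcribe the proof of Theorem \ref{thm.i6} into its one-sided (upper/left) form: throughout, ``invertible'' is replaced by ``bounded below'', ``Drazin invertible'' by ``quasi (left) Drazin invertible'', ``generalized Drazin invertible'' by ``left generalized Drazin invertible'', and the spectrum $\sigma(T_{N})$ by $\sigma_{lgd}(T_{N})$, keeping only the ``left'' half of each argument and never passing to the adjoint. I would first settle $(i)\Leftrightarrow(ii)\Leftrightarrow(iii)$ exactly as in Theorem \ref{thm.i6}, as consequences of Propositions \ref{prop.i4} and \ref{prop.i5}. If $(M,N)\in g_{z}KD(T)$ with $T_{M}$ bounded below, then $T_{M}$ is upper semi-Browder and $T_{N}$ is zeroloid, so $T=T_{M}\oplus T_{N}$ witnesses $(ii)$; conversely, writing $T=A\oplus Q$ with $A$ upper semi-Browder and $Q$ zeroloid, Proposition \ref{prop.i5} produces $(M_{0},N_{0})\in KD(A)$ with $A_{M_{0}}$ bounded below and $\mbox{dim}\,N_{0}<\infty$, so that $(M_{0},N_{0}\oplus Y)\in g_{z}KD(T)$ (with $Q$ acting on $Y$), the part on $N_{0}\oplus Y$ being nilpotent $\oplus$ zeroloid, hence zeroloid by Remark \ref{rema1.2}(iv). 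For $(iii)$ I would use that for the \emph{semi-regular} summand $T_{M}$, quasi one-sided Drazin invertibility forces finite ascent by Proposition \ref{prop.i4}, and a semi-regular operator of finite ascent has ascent $0$, i.e. is bounded below.

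The analytic core is the chain $(i)\Leftrightarrow(vi)\Leftrightarrow(vii)$, where the punctured-neighborhood Theorem \ref{thm1.5} does the work. Fix $(M,N)\in g_{z}KD(T)$; since $\widetilde p(T)=p(T_{M})$ is well defined (Proposition \ref{prop1.6}) and $T_{M}$ is semi-regular, $\widetilde p(T)=0$ is equivalent to $\alpha(T_{M})=0$, i.e. to $T_{M}$ being bounded below, which is $(i)$; this gives $(i)\Leftrightarrow(vi)$. For $(i)\Rightarrow(vii)$ I would take $\epsilon=\gamma(T_{M})$ and note that for $0<|\lambda|<\epsilon$ the operator $T_{M}-\lambda I$ is semi-regular with $\alpha(T_{M}-\lambda I)=\alpha(T_{M})=0$, hence bounded below, while $T_{N}-\lambda I$ is left generalized Drazin invertible because $T_{N}$ is zeroloid (so $\sigma_{lgd}(T_{N})\subset\{0\}$ by the characterization of zeroloid operators). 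Additivity of $\sigma_{lgd}$ over the reducing decomposition then gives $0\notin\sigma_{lgd}(T-\lambda I)$ for all such $\lambda$, i.e. $0\notin\mbox{acc}\,\sigma_{lgd}(T)$.

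The converse $(vii)\Rightarrow(vi)$ is the step I expect to be the main obstacle, and I would base it on the remark that a left generalized Drazin invertible operator $S$ is pseudo-Fredholm with $\widetilde p(S)=0$, so Proposition \ref{propa1} yields $\mathcal{H}_{0}(S)\cap\mathcal{K}(S)=\{0\}$ and hence $\mathcal{N}(S)\cap\mathcal{K}(S)=\{0\}$ (using $\mathcal{N}(S)\subset\mathcal{H}_{0}(S)$). Assuming $0\notin\mbox{acc}\,\sigma_{lgd}(T)$, I would shrink $\epsilon$ so that $T-\lambda I$ is left generalized Drazin invertible for every $0<|\lambda|<\epsilon$; then $\mathcal{N}(T-\lambda I)\cap\mathcal{K}(T-\lambda I)=\{0\}$, and Theorem \ref{thm1.5}(ii) forces $\widetilde\alpha(T)=\mbox{dim}\,\mathcal{N}(T-\lambda I)\cap\mathcal{K}(T-\lambda I)=0$, so $\widetilde p(T)=0$. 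The delicate points here are the direct-sum behaviour of $\sigma_{lgd}$ on reducing decompositions and the claim that a left generalized Drazin invertible operator is pseudo-Fredholm with vanishing $\widetilde p$; both I would settle by unwinding the defining bounded-below $\oplus$ quasi-nilpotent splitting and invoking Proposition \ref{propa1} and the zeroloid lemma, rather than arguing from scratch.

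Finally, for $(i)\Leftrightarrow(v)$ I would copy the corresponding part of Theorem \ref{thm.i6}. Given $(M,N)\in g_{z}KD(T)$ with $T_{M}$ bounded below, the projection $P:=0_{M}\oplus I_{N}$ commutes with $T$, $TP=0_{M}\oplus T_{N}$ is zeroloid, and $T+P=T_{M}\oplus(T_{N}+I)$ is left generalized Drazin invertible, since $T_{M}$ is bounded below and $-1\notin\mbox{acc}\,\sigma(T_{N})$ makes $T_{N}+I$ generalized Drazin invertible (Koliha). Conversely, from such a $P$ I would put $(A,B):=(\mathcal{N}(P),\mathcal{R}(P))\in\mbox{Red}(T)$, note that $T_{B}=(TP)_{B}$ is zeroloid and $T_{A}=(T+P)_{A}$ is left generalized Drazin invertible (restriction to a reducing summand, again by additivity of $\sigma_{lgd}$), split $T_{A}$ as bounded below $\oplus$ quasi-nilpotent on a pair $(C,D)$, and verify that $(C,D\oplus B)\in g_{z}KD(T)$ with $T_{C}$ bounded below. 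Since the statement lists no item $(iv)$, no $g_{z}$-inverse $S$ need be built, which removes the most computational part of the model proof.
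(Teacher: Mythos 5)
The paper itself offers no proof of Theorem \ref{thm.i7} (it is stated, together with Theorem \ref{thm.i8}, as an unproved analogue of Theorem \ref{thm.i6}), so your plan of transcribing the proof of Theorem \ref{thm.i6} into one-sided form is exactly the intended route, and most of your execution is correct: (i)$\Leftrightarrow$(ii)$\Leftrightarrow$(iii) via Propositions \ref{prop.i4} and \ref{prop.i5} (where you rightly read ``quasi right'' in item (iii) as a misprint for ``quasi left''), the equivalence (i)$\Leftrightarrow$(vi), the implication (i)$\Rightarrow$(vii) via Theorem \ref{thm1.5} and the zeroloid lemma, the implication (vii)$\Rightarrow$(vi) via Proposition \ref{propa1}, and the implication (i)$\Rightarrow$(v) are all sound.

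The genuine gap is in (v)$\Rightarrow$(i), at the assertion that $T_{A}=(T+P)_{A}$ is left generalized Drazin invertible ``by additivity of $\sigma_{lgd}$ on reducing decompositions.'' The half of that additivity you need --- that left generalized Drazin invertibility of a direct sum passes to its summands --- is precisely the nontrivial point, and it is exactly where the analogy with Theorem \ref{thm.i6} breaks. In the two-sided case the corresponding step is free, because Koliha's theorem characterizes generalized Drazin invertibility by the purely spectral condition $0\notin \mbox{acc}\,\sigma(\cdot)$, and $\sigma((T+P)_{A})\subset \sigma(T+P)$ for a direct sum. The left-sided notion has no such purely spectral characterization: it requires exhibiting an invariant topological splitting into a bounded below part and a quasi-nilpotent part, and the splitting $X=C\oplus D$ witnessing left generalized Drazin invertibility of $T+P$ need not respect $(A,B)=(\mathcal{N}(P),\R(P))$ (the inclusion $\sigma_{a}((T+P)_{A})\subset\sigma_{a}(T+P)$ only gives the spectral half, namely $0\notin\mbox{acc}\,\sigma_{a}(T_{A})$). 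The claim is nevertheless true here and can be repaired using the fact that the complementary summand $(T+P)_{B}=T_{B}+I_{B}$ is generalized Drazin invertible (because $T_{B}=(TP)_{B}$ is zeroloid): one has $D=\mathcal{H}_{0}(T+P)=\mathcal{H}_{0}(T_{A})\oplus\mathcal{H}_{0}(T_{B}+I_{B})$, which does respect $A\oplus B$ and is $T$-invariant; writing $B_{2}:=\mathcal{H}_{0}(T_{B}+I_{B})$ and $B=B_{1}\oplus B_{2}$, first replace $C$ by its image $C'$ under the projection of $X$ onto $A\oplus B_{1}$ along $B_{2}$ (one checks $C'$ is closed, $(T+P)$-invariant, complements $\mathcal{H}_{0}(T_{A})$ in $A\oplus B_{1}$, and $T+P$ remains bounded below on it), and then put $M:=C'\cap A$, which is a complement of $\mathcal{H}_{0}(T_{A})$ in $A$ precisely because the quasi-nilpotent summand $\mathcal{H}_{0}(T_{A})$ now lies inside $A$. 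Then $A=M\oplus\mathcal{H}_{0}(T_{A})$ with $T_{M}$ bounded below, so $(M,\mathcal{H}_{0}(T_{A})\oplus B)\in g_{z}KD(T)$ and $T$ is left $g_{z}$-invertible. Without an argument of this kind (or a citable restriction theorem for left generalized Drazin invertibility, which neither the paper nor your proposal supplies), the implication (v)$\Rightarrow$(i) remains unproved.
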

\begin{thm}\label{thm.i8} Let $T\in L(X).$ The following statements are equivalent.\\
(i) $T$ is right $g_{z}$-invertible;\\
(ii) $T$ is lower semi-$g_{z}$-Browder;\\
(iii) There exists $(M,N) \in g_{z}KD(T)$ such that  $T_{M}$ is quasi right Drazin invertible;\\
(v) There exists a bounded projection $P$ on $X$ which commutes with $T,$ $T + P$ is right generalized Drazin invertible and $TP$  is zeroloid;\\
(vi) $T$ is $g_{z}$-Kato and $\tilde{q}(T)=0;$\\
(vii) $T$ is $g_{z}$-Kato and $0\not\in\mbox{acc}\,\sigma_{rgd}(T).$
\end{thm}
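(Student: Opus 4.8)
The plan is to treat this statement as the surjective (``right'') mirror of Theorem \ref{thm.i6}, replacing throughout ``invertible'' by ``surjective'', ``Drazin invertible'' by ``right Drazin invertible'', ``generalized Drazin invertible'' by ``right generalized Drazin invertible'', and the condition $\tilde p(T)=\tilde q(T)<\infty$ by $\tilde q(T)=0$. The organizing observation is that for $(M,N)\in g_{z}KD(T)$ one has $q(T_{M})=0$ if and only if $T_{M}$ is surjective; combined with the independence of $\tilde q(T)=q(T_{M})$ from the chosen decomposition (Proposition \ref{prop1.6}), this makes the equivalence (i) $\Longleftrightarrow$ (vi) immediate. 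A second reduction, used repeatedly, is that $\tilde q(T)<\infty$ already forces $\tilde q(T)=0$: since $T_{M}$ is semi-regular we have $\mbox{dis}(T_{M})=0$, so Proposition \ref{propi} gives $q(T_{M})=\mbox{dis}(T_{M})=0$ whenever $q(T_{M})<\infty$.

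For the structural equivalences I would argue as follows. (i) $\Longrightarrow$ (ii): if $T_{M}$ is surjective then it is lower semi-Fredholm with $q(T_{M})=0$, hence lower semi-Browder, and $T=T_{M}\oplus T_{N}$ exhibits $T$ as lower semi-Browder $\oplus$ zeroloid; conversely, writing $T=A\oplus B$ with $A$ lower semi-Browder and $B$ zeroloid, a Kato decomposition of $A$ (via \cite[Corollary 3.7]{aznay-ouahab-zariouh2} and the surjective analogue of \cite[Lemma 2.17]{aznay-ouahab-zariouh}) splits off a surjective semi-regular part whose complement, being nilpotent $\oplus$ zeroloid, is zeroloid by Remark \ref{rema1.2}, giving (i). The equivalence (i) $\Longleftrightarrow$ (iii) is trivial in one direction, and in the other it follows because a semi-regular $T_{M}$ that is quasi right Drazin invertible has $\mbox{dis}(T_{M})=0$, so the nilpotent part of its Kato decomposition vanishes and $T_{M}$ is surjective (alternatively, invoke Proposition \ref{prop.i4}(iii) together with Proposition \ref{propi}). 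For (i) $\Longleftrightarrow$ (v) I would reuse the projection $P=0_{M}\oplus I_{N}$: then $TP=0_{M}\oplus T_{N}$ is zeroloid, while $T+P=T_{M}\oplus(T_{N}+I_{N})$ is right generalized Drazin invertible because $T_{M}$ is surjective and $-1\notin\mbox{acc}\,\sigma(T_{N})=\sigma_{gd}(T_{N})$; the converse recovers $(M,N)=(\mathcal{R}(P),\mathcal{N}(P))$ and uses that $\sigma_{rgd}$ distributes over the reducing direct sum to pass right generalized Drazin invertibility of $T+P$ to its summand on $\mathcal{N}(P)$.

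The substantive step is (i) $\Longleftrightarrow$ (vii), that is, identifying $\tilde q(T)=0$ with $0\notin\mbox{acc}\,\sigma_{rgd}(T)$ under the standing hypothesis that $T$ is $g_{z}$-Kato; here Theorem \ref{thm1.5} is the main tool. Fixing $(M,N)\in g_{z}KD(T)$ and $\epsilon=\gamma(T_{M})$, for $0<|\lambda|<\epsilon$ the operator $T_{M}-\lambda I$ is semi-regular with $\beta(T_{M}-\lambda I)=\beta(T_{M})$, while $T_{N}-\lambda I$ is generalized Drazin invertible because $\mbox{acc}\,\sigma(T_{N})\subset\{0\}$; thus if $T$ is right $g_{z}$-invertible (so $\beta(T_{M})=0$) every such $T-\lambda I$ is right generalized Drazin invertible, whence $0\notin\mbox{acc}\,\sigma_{rgd}(T)$. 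Conversely, if $0\notin\mbox{acc}\,\sigma_{rgd}(T)$ then for small $\lambda\neq0$ the operator $T-\lambda I$ is right generalized Drazin invertible, so $\mathcal{R}(T-\lambda I)+\mathcal{H}_{0}(T-\lambda I)=X$; but Theorem \ref{thm1.5}(iii) reads $\beta(T_{M})=\mbox{codim}\,[\mathcal{R}(T-\lambda I)+\mathcal{H}_{0}(T-\lambda I)]$, forcing $\beta(T_{M})=0$ and hence $\tilde q(T)=q(T_{M})=0$. I expect this last equivalence to be the \emph{main obstacle}, precisely because it requires transporting the global spectral information near $0$ back to the local deficiency datum $\beta(T_{M})$ through the punctured-neighborhood identities, in contrast with the purely algebraic bookkeeping that suffices for (i)--(iii), (v) and (vi).
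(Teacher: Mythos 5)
Your overall plan --- mirroring the proof of Theorem \ref{thm.i6} with ``surjective'' in place of ``invertible'' --- is exactly what the paper intends: it states Theorem \ref{thm.i8} without proof, as ``analogous'' to Theorem \ref{thm.i6}. Most of your steps carry this analogy out correctly: (i) $\Leftrightarrow$ (vi) via Proposition \ref{prop1.6}; the reduction $\tilde{q}(T)<\infty\Rightarrow\tilde{q}(T)=0$ via semi-regularity and Proposition \ref{propi}; (i) $\Leftrightarrow$ (ii) and (i) $\Leftrightarrow$ (iii) via the arguments behind Propositions \ref{prop.i4} and \ref{prop.i5}; and the punctured-neighborhood argument for (i) $\Leftrightarrow$ (vii), using Theorem \ref{thm1.5}(iii) to read $\beta(T_{M})$ off from $\mbox{codim}\,[\R(T-\lambda I)+\mathcal{H}_{0}(T-\lambda I)]$, is correct and is indeed where the real content lies.

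The gap is in (v) $\Rightarrow$ (i). You assert that ``$\sigma_{rgd}$ distributes over the reducing direct sum,'' i.e.\ that right generalized Drazin invertibility of $T+P$ passes to its restriction on the reducing subspace $\mathcal{N}(P)$. In Theorem \ref{thm.i6} the corresponding step is immediate, because two-sided generalized Drazin invertibility has a purely spectral characterization ($\sigma_{gd}(S)=\mbox{acc}\,\sigma(S)$, Koliha \cite{koliha}) and $\sigma(S_{A})\subset\sigma(S)$ for every $(A,B)\in\mbox{Red}(S)$. Right generalized Drazin invertibility has no such spectral description: it is the \emph{existence} of a pair $(M',N')\in\mbox{Red}(T+P)$ with $(T+P)_{M'}$ surjective and $(T+P)_{N'}$ quasi-nilpotent, and this pair need not be compatible with $(\mathcal{N}(P),\mathcal{R}(P))$ --- nothing forces $P(N')\subset N'$. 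So your claim is a genuine lemma, not bookkeeping. It is true, but a proof needs real input, e.g.\ the characterization of \cite{benharrat}: $S$ is right generalized Drazin invertible iff $\mathcal{K}(S)$ is closed and has a closed $S$-invariant complement contained in $\mathcal{H}_{0}(S)$ (note $M'=\mathcal{K}(S)$ is canonical, since $\mathcal{K}$ of a surjective operator is the whole space and of a quasi-nilpotent operator is $\{0\}$); then, writing $S=S_{1}\oplus S_{2}$ on $X_{1}\oplus X_{2}$ and using $\mathcal{K}(S)=\mathcal{K}(S_{1})\oplus\mathcal{K}(S_{2})$, $\mathcal{H}_{0}(S)=\mathcal{H}_{0}(S_{1})\oplus\mathcal{H}_{0}(S_{2})$ and the modular law, one checks that $N_{1}:=X_{1}\cap\bigl(\mathcal{K}(S_{2})\oplus N'\bigr)$ is a closed $S_{1}$-invariant complement of $\mathcal{K}(S_{1})$ in $X_{1}$ contained in $\mathcal{H}_{0}(S_{1})$. (Alternatively, for the spectral half of (vii) you can avoid any transfer: $T+P-\lambda I$ is surjective for small $\lambda\neq 0$, and surjectivity \emph{does} restrict to reducing summands, so $0\notin\mbox{acc}\,\sigma_{rgd}(T)$; but you would still need the transfer of pseudo-Fredholmness to conclude that $T$ is $g_{z}$-Kato.) Until this lemma is proved or properly cited, (v) $\Rightarrow$ (i) is not established; everything else in your proposal stands.
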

\begin{cor}\label{cor.i9}  If   $T \in  L(X)$ is  $g_{z}$-invertible  and  $S$ is a   $g_{z}$-inverse of $T,$ then     $TST$ is the  Drazin  inverse of $S$   and $p(S) = q(S) = \mbox{dis}(S)\leq 1.$
\end{cor}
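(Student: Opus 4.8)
The plan is to unpack the hypothesis that $S$ is a $g_z$-inverse of $T$ and exploit the explicit decomposition that was already extracted in the proof of Theorem \ref{thm.i6}. Recall that there we showed, for $T$ $g_z$-invertible with $g_z$-inverse $S$, that $(M,N):=(\R(TS),\mathcal{N}(TS))\in \mbox{Red}(T)\cap \mbox{Red}(S)$, that $T_M$ is invertible, and that $S=(T_M)^{-1}\oplus 0_N$. I would start by recording these facts and computing the candidate Drazin inverse $TST$ in this decomposition: since $S=(T_M)^{-1}\oplus 0_N$, we get $TST=T_M (T_M)^{-1}\oplus 0_N=I_M\oplus 0_N$, the bounded projection $P_M$ onto $M$ along $N$.

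Next I would verify the defining relations for $TST$ being the Drazin inverse of $S$. The Drazin inverse of $S$ is the unique operator $D$ commuting with $S$ satisfying $DSD=D$ and $S^{n+1}D=S^n$ for some $n$. With $D:=TST=I_M\oplus 0_N$ and $S=(T_M)^{-1}\oplus 0_N$, everything reduces to a direct-sum computation on $M$ and $N$ separately. On $M$: $D_M=I_M$ and $S_M=(T_M)^{-1}$ is invertible, so $D_M S_M D_M=(T_M)^{-1}=D_M\cdot$(nothing), and one checks $D_M S_M D_M=D_M$ fails literally — so the cleaner route is to note $S_M$ is invertible so its Drazin inverse on $M$ is $(S_M)^{-1}=T_M$, and indeed $(TST)_M=I_M$ is the identity, matching $S_M (S_M)^{-1}=I_M$; on $N$, $S_N=0_N$ so its Drazin inverse is $0_N=(TST)_N$. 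The correct statement to verify is that $TST$ satisfies $S\,(TST)=(TST)\,S$, $(TST)\,S\,(TST)=TST$, and $S^{2}\,(TST)=S$; all three split as $I_M$-versus-$0_N$ identities and hold by the invertibility of $T_M$ and the vanishing on $N$. Since $S=(T_M)^{-1}\oplus 0_N$ has $S_N$ nilpotent of degree $\le 1$ and $S_M$ invertible, $S$ is Drazin invertible with a decomposition of the required form, and by uniqueness of the Drazin inverse $TST$ is it.

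For the index claim, I would compute $p(S)$, $q(S)$, and $\mbox{dis}(S)$ directly from the splitting $S=(T_M)^{-1}\oplus 0_N$. Because $(T_M)^{-1}$ is invertible (hence has $\alpha=\beta=p=q=\mbox{dis}=0$) and $0_N$ is the zero operator on $N$ (nilpotent of degree $\le 1$, hence $p(0_N)=q(0_N)=\mbox{dis}(0_N)\le 1$), the ascent, descent and degree of stable iteration of the direct sum are the maxima of the two pieces, giving $p(S)=q(S)=\mbox{dis}(S)\le 1$. Here I would invoke Proposition \ref{propi}, which guarantees $i(S)=\mbox{dis}(S)$ and that finite ascent or descent equals $\mbox{dis}$, so the three quantities genuinely coincide. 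The main obstacle, though a mild one, is bookkeeping: one must be careful that $N$ could be $\{0\}$ or $M$ could be all of $X$ (the degenerate cases), in which case some of the ``$\le 1$'' bounds become ``$=0$''; I would handle this by phrasing everything with $\le 1$ so the degenerate cases are absorbed uniformly. No deep machinery is needed beyond the structural facts already established for the $g_z$-inverse and Proposition \ref{propi}.
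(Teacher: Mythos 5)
Your overall strategy---reuse the decomposition $(M,N)=(\R(TS),\mathcal{N}(TS))$ from the proof of Theorem \ref{thm.i6}, write $S=(T_{M})^{-1}\oplus 0_{N}$, and verify the Drazin relations blockwise---is exactly the intended one (the paper's proof is just this routine verification). But your central computation is wrong: with $T=T_{M}\oplus T_{N}$ and $S=(T_{M})^{-1}\oplus 0_{N}$ one has $TS=I_{M}\oplus 0_{N}$ and hence
$$TST=(I_{M}\oplus 0_{N})(T_{M}\oplus T_{N})=T_{M}\oplus 0_{N},$$
not $I_{M}\oplus 0_{N}$; what you computed is the projection $TS$, with one factor of $T$ dropped. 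This slip is not cosmetic. For $D:=I_{M}\oplus 0_{N}$ the relation $DSD=D$ fails (it yields $S$), which is precisely the inconsistency you yourself noticed (``fails literally''); and the three identities you then claim to hold, namely $S(TST)=(TST)S$, $(TST)S(TST)=TST$ and $S^{2}(TST)=S$, are \emph{false} for your value of $TST$: e.g.\ $(TST)\,S\,(TST)=S\neq TST$ unless $T_{M}=I_{M}$. Instead of repairing the arithmetic, you patched the gap with the assertion that $(TST)_{M}=I_{M}$ ``matches'' $S_{M}(S_{M})^{-1}=I_{M}$; this conflates \emph{being the Drazin inverse of} $S_{M}$ with \emph{multiplying $S_{M}$ to the identity}. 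The Drazin inverse of $S_{M}=(T_{M})^{-1}$ is $T_{M}$, and it must equal $(TST)_{M}$ --- which is exactly what the corrected computation delivers.

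Once $TST=T_{M}\oplus 0_{N}$ is in hand, the rest of your argument goes through verbatim: the three relations split over $M$ and $N$ and hold ($S(TST)=(TST)S=I_{M}\oplus 0_{N}$, $(TST)S(TST)=T_{M}\oplus 0_{N}=TST$, $S^{2}(TST)=(T_{M})^{-1}\oplus 0_{N}=S$), so by uniqueness $TST$ is the Drazin inverse of $S$. Your treatment of the second claim is correct as written: the ascent, descent and degree of stable iteration of a direct sum are the maxima over the blocks, $S_{M}$ is invertible, $S_{N}=0_{N}$ is nilpotent of degree at most $1$, and Proposition \ref{propi} identifies $p(S)=q(S)=\mbox{dis}(S)\leq 1$, with the degenerate case $N=\{0\}$ absorbed by the inequality.
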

\begin{proof} Obvious.
\end{proof}
Hereafter, $\sigma_{lg_{z}d}(T),$ $\sigma_{rg_{z}d}(T)$ and  $\sigma_{g_{z}d}(T)$  are  respectively, the left    $g_{z}$-invertible spectrum, the right    $g_{z}$-invertible spectrum and the     $g_{z}$-invertible spectrum   of a given $T \in L(X).$
 \begin{thm}\label{thm.i10}
For every  $T\in L(X)$ we have   $\sigma_{g_{z}d}(T)=\mbox{acc}\,(\mbox{acc}\,\sigma(T)).$
\end{thm}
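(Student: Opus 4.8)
The plan is to fix $\lambda\in\C$, reduce to $\lambda=0$ by replacing $T$ with $T-\lambda I$, and prove the two inclusions behind $0\in\sigma_{g_{z}d}(T)\Longleftrightarrow 0\in\mbox{acc}\,(\mbox{acc}\,\sigma(T))$ separately. The only characterization of $g_{z}$-invertibility I need is Remark \ref{rema.i2}(i): $T$ is $g_{z}$-invertible if and only if there exists $(M,N)\in g_{z}KD(T)$ with $T_{M}$ invertible. Throughout I will use that for a reducing pair $(M,N)\in\mbox{Red}(T)$ one has $\sigma(T)=\sigma(T_{M})\cup\sigma(T_{N})$, together with the elementary topological facts $\mbox{acc}\,(A\cup B)=\mbox{acc}\,A\cup\mbox{acc}\,B$ and $A\subseteq B\Rightarrow\mbox{acc}\,A\subseteq\mbox{acc}\,B$ for subsets of $\C$.

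For the inclusion $\sigma_{g_{z}d}(T)\subseteq\mbox{acc}\,(\mbox{acc}\,\sigma(T))$ I argue contrapositively: assume $T$ is $g_{z}$-invertible and produce a punctured neighbourhood of $0$ missing $\mbox{acc}\,\sigma(T)$. Pick $(M,N)\in g_{z}KD(T)$ with $T_{M}$ invertible and $T_{N}$ zeroloid. Since $0\notin\sigma(T_{M})$ and $\sigma(T_{M})$ is closed, there is $\epsilon>0$ with $B(0,\epsilon)\cap\sigma(T_{M})=\emptyset$, whence $\mbox{acc}\,\sigma(T_{M})\cap B(0,\epsilon)=\emptyset$; and $T_{N}$ zeroloid gives $\mbox{acc}\,\sigma(T_{N})\subseteq\{0\}$. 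Using $\mbox{acc}\,\sigma(T)=\mbox{acc}\,\sigma(T_{M})\cup\mbox{acc}\,\sigma(T_{N})$ I conclude $\mbox{acc}\,\sigma(T)\cap(B(0,\epsilon)\setminus\{0\})=\emptyset$, i.e. $0\notin\mbox{acc}\,(\mbox{acc}\,\sigma(T))$. This direction is routine.

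The reverse inclusion is the heart of the matter and the only place I expect real work. Assume $0\notin\mbox{acc}\,(\mbox{acc}\,\sigma(T))$, so there is $\epsilon>0$ with $\mbox{acc}\,\sigma(T)\cap B(0,\epsilon)\subseteq\{0\}$; I must manufacture $(M,N)\in g_{z}KD(T)$ with $T_{M}$ invertible. If $0\notin\sigma(T)$ then $T$ is invertible and there is nothing to prove, so assume $0\in\sigma(T)$. The key observation is that $K:=\sigma(T)\cap\{z:\epsilon/3\le|z|\le 2\epsilon/3\}$ is compact and has no accumulation point (any such point would lie in $\mbox{acc}\,\sigma(T)\cap B(0,\epsilon)=\{0\}$, which is excluded from the annulus), hence $K$ is finite. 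Thus I may choose $\delta\in(\epsilon/3,2\epsilon/3)$ so that the circle $\{|z|=\delta\}$ avoids $\sigma(T)$, making $\sigma_{0}:=\sigma(T)\cap\{|z|\le\delta\}$ a spectral (clopen) subset of $\sigma(T)$ containing $0$. The associated Riesz idempotent $P$ commutes with $T$ and yields $(M,N):=(\mathcal{N}(P),\R(P))\in\mbox{Red}(T)$ with $\sigma(T_{M})=\sigma(T)\setminus\sigma_{0}$ and $\sigma(T_{N})=\sigma_{0}$. Since $0\in\sigma_{0}$, the operator $T_{M}$ is invertible; since $\sigma_{0}\subseteq\overline{B(0,\delta)}\subseteq B(0,\epsilon)$, monotonicity gives $\mbox{acc}\,\sigma(T_{N})\subseteq\mbox{acc}\,\sigma(T)\cap B(0,\epsilon)\subseteq\{0\}$, so $T_{N}$ is zeroloid. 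Hence $(M,N)\in g_{z}KD(T)$ with $T_{M}$ invertible, and $T$ is $g_{z}$-invertible by Remark \ref{rema.i2}(i). The main obstacle is exactly this construction: verifying (via the finiteness-in-the-annulus argument) that a circle of radius $\delta$ can be slipped between $0$ and the remainder of $\sigma(T)$, so that the Riesz functional calculus applies and splits off precisely the spectral mass accumulating at $0$.
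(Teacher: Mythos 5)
Your proof is correct, and its skeleton matches the paper's: the easy inclusion (from $g_{z}$-invertibility to $0\notin \mbox{acc}\,(\mbox{acc}\,\sigma(T))$) is argued exactly as in the paper, and the hard inclusion is in both cases a matter of producing a spectral set $\sigma_{0}\ni 0$ whose nonzero points accumulate only at $0$, then invoking the Riesz decomposition to get $(M,N)\in \mbox{Red}(T)$ with $\sigma(T_{M})=\sigma(T)\setminus\sigma_{0}$ invertible and $\sigma(T_{N})=\sigma_{0}$ zeroloid. Where you genuinely diverge is in how that spectral set is manufactured. The paper builds it by hand: it takes $F_{2}=D(0,\tfrac{\epsilon}{2})\cap\overline{\mbox{iso}\,\sigma(T)}$ (with $\epsilon$ the distance from $0$ to the nonzero points of $\mbox{acc}\,(\mbox{iso}\,\sigma(T))$) and $F_{1}=((\mbox{acc}\,\sigma(T))\setminus\{0\})\cup(\overline{\mbox{iso}\,\sigma(T)}\setminus F_{2})$, verifies closedness and disjointness by sequence arguments, and needs a separate case when $\mbox{acc}\,(\mbox{iso}\,\sigma(T))=\{0\}$. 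You instead note that $\sigma(T)$ meets the annulus $\{z:\tfrac{\epsilon}{3}\le |z|\le \tfrac{2\epsilon}{3}\}$ in a compact set with no accumulation points, hence a finite set, so a circle $\{|z|=\delta\}$ can be slipped through $\sigma(T)$, making $\sigma_{0}=\sigma(T)\cap D(0,\delta)$ clopen in $\sigma(T)$. Your construction is shorter, avoids the case analysis entirely, and treats uniformly the situation $0\notin\mbox{acc}\,\sigma(T)$, which the paper handles separately via generalized Drazin invertibility. What the paper's more explicit construction buys is structural information that it recycles afterwards: the proofs of Proposition \ref{prop.i12} and Lemma \ref{lemomom} refer back to ``the proof of Theorem \ref{thm.i10}'' precisely because its $F_{2}$ is visibly the closure of a set of isolated points of $\sigma(T)$. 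With your construction the analogous fact, namely $\sigma_{0}\setminus\{0\}\subset \mbox{iso}\,\sigma(T)$, still holds, but it requires the extra one-line remark that a nonzero point of $\sigma(T)\cap D(0,\delta)$ cannot lie in $\mbox{acc}\,\sigma(T)$ because $\mbox{acc}\,\sigma(T)\cap B(0,\epsilon)\subset\{0\}$.
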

\begin{proof}
Let $\mu \notin \mbox{acc}\,(\mbox{acc}\,\sigma(T)).$ Without loss of generality we assume that $\mu=0$ [note that $\mbox{acc}\,\mbox{acc}\,\sigma(T- \alpha I)=\mbox{acc}\,(\mbox{acc}\,\sigma(T))-\alpha,$ for every complex scalar $\alpha$]. If $0 \notin \mbox{acc}\,\sigma(T),$ then $T$ is generalized Drazin invertible and it   is in particular  $g_{z}$-invertible.\\
If $0 \in \mbox{acc}\,\sigma(T)$ then $0\in \mbox{acc}\,(\mbox{iso}\, \sigma(T)).$ We distinguish two  cases:\\
{\bf Case 1:}  $\mbox{acc}\,(\mbox{iso}\, \sigma(T))\neq\{0\}.$  It follows that    $\epsilon:=\underset{\lambda\in acc\,(iso\, \sigma(T))\setminus\{0\}}{\mbox{inf}} |\lambda|>0.$ The sets $F_{2}:= D(0, \frac{\epsilon}{2}) \cap \overline{\mbox{iso}\, \sigma(T)}$ and   $F_{1}:=((\mbox{acc}\,\sigma(T))\setminus\{0\}) \cup (\overline{\mbox{iso}\, \sigma(T)}\setminus F_{2})$   are closed  and disjoint. Indeed, $F_{1}\cap F_{2}=F_{2}\cap\left[(\mbox{acc}\,\sigma(T))\setminus\{0\}\right]\subset  \left[\mbox{acc}\, (\mbox{iso}\, \sigma(T))\setminus\{0\}\right] \cap D(0, \frac{\epsilon}{2}) =\emptyset.$  As $0 \notin \mbox{acc}\,(\mbox{acc}\,\sigma(T))$ then $(\mbox{acc}\,\sigma(T))\setminus\{0\}$ is closed.  Let us  show that  $C:=(\overline{\mbox{iso}\, \sigma(T)}\setminus F_{2})$ is closed.   Suppose  that   $\lambda \in \mbox{acc}\,{C}$ (the case of  $\mbox{acc}\,{C}=\emptyset$ is obvious), then $\lambda \in \overline{\mbox{iso}\, \sigma(T)}.$    Let   $(\lambda_{n})_{n}\subset C$ be   a non stationary   sequence which  converges to $\lambda,$ it follows that    $\lambda \neq 0.$ We have $\lambda \not\in F_{2}.$ Otherwise, $\lambda  \in D(0, \frac{\epsilon}{2})$ and then  $\lambda \notin \mbox{acc}\,(\mbox{iso}\, \sigma(T)),$  so that  $\lambda \in \mbox{iso}\, \sigma(T)$ and  this is a contradiction. Therefore  $C$ is closed  and then  $F_{1}$ is closed.   As $\sigma(T)=F_{1}\cup F_{2},$ then    there exists $(M, N)\in  \mbox{Red}(T)$  such that $\sigma(T_{M})=F_{1}$ and $\sigma(T_{N})=F_{2}.$    So  $T_{M}$ is invertible  and   $0  \in  \mbox{acc}\, \sigma(T_{N}).$  Let $v  \in F_{2}$  then $v \notin \mbox{acc}\,\sigma(T_{N})\setminus\{0\},$ since $F_{1}\cap F_{2}=F_{2}\cap(\mbox{acc}\,\sigma(T)\setminus\{0\})=\emptyset.$  Hence $\mbox{acc}\,\sigma(T_{N})= \{0\}$  and $T$ is $g_{z}$-invertible.\\
{\bf Case 2:}  $\mbox{acc}\,(\mbox{iso}\, \sigma(T))=\{0\}.$  Then      $F_{2}:= D(0, 1) \cap \overline{\mbox{iso}\, \sigma(T)}$ and $F_{1}:=((\mbox{acc}\,\sigma(T))\setminus\{0\}) \cup (\overline{\mbox{iso}\, \sigma(T)}\setminus F_{2})$ are closed disjoint subsets and give  the desired result. For this, let $\lambda \in \overline{C};$ where $C:=\overline{\mbox{iso}\, \sigma(T)}\setminus F_{2}.$ Then there exists a sequence $(\lambda_{n})\subset C$ which  converges to $\lambda.$ As  $\mbox{acc}\,(\mbox{iso}\, \sigma(T))=\{0\}$ and $\lambda (\neq 0) \in \overline{\mbox{iso}\, \sigma(T)}$ then $\lambda \in \mbox{iso}\, \sigma(T).$ Therefore   $(\lambda_{n})_{n}$ is stationary and so $\lambda \in C.$ Thus $F_{1}$ is closed  and hence    there exists $(M, N)\in  \mbox{Red}(T)$  such that $\sigma(T_{M})=F_{1}$ and $\sigma(T_{N})=F_{2}.$    Conclusion,   $T$ is $g_{z}$-invertible.\\
Conversely, if  $T$ is $g_{z}$-invertible then $T=T_{1}\oplus T_{2};$ where $T_{1}$ is invertible and $T_{2}$ is zeroloid. And then    there exists $\epsilon>0$ such that $B(0, \epsilon)\setminus \{0\} \subset (\sigma(T_{1}))^C\cap(\mbox{acc}\,\sigma(T_{2}))^C\subset (\mbox{acc}\,\sigma(T))^C.$ Thus   $0 \notin \mbox{acc}\,(\mbox{acc}\,\sigma(T))$ and the proof is complete.
\end{proof}
From  the previous theorem and some well known results in perturbation theory, we obtain the following corollary.
\begin{cor}\label{cor.i11}  Let $T \in L(X).$   The following statements  hold.\\
(i)  $\sigma_{lg_{z}d}(T),$ $\sigma_{rg_{z}f}(T)$  and  $\sigma_{g_{z}d}(T)$  are  compact.\\
(ii) $\sigma_{g_{z}d}(T)=\sigma_{g_{z}d}(T^{*}).$\\
(iii) If $S$ is  a  Banach space operator, then $T\oplus S$ is     $g_{z}$-invertible  if and only if $T$ and $S$ are    $g_{z}$-invertible.\\
(iv) $T$   is  $g_{z}$-invertible if and only if   $T^{n}$ is  $g_{z}$-invertible for some (equivalently for every) integer $n\geq 1.$\\
(v) If $Q \in \mbox{comm}(T)$ is  quasi-nilpotent,  then  $\sigma_{g_{z}d}(T)= \sigma_{g_{z}d}(T+Q).$\\
(vi) If $F \in \mathcal{F}_{0}(X)\cap\mbox{comm}(T),$  then  $\sigma_{g_{z}d}(T)= \sigma_{g_{z}d}(T+F);$ where  $\mathcal{F}_{0}(X)$ is  the set of   all power finite rank operators.
\end{cor}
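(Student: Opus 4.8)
The plan is to reduce every assertion to the single identity $\sigma_{g_{z}d}(T)=\mbox{acc}\,(\mbox{acc}\,\sigma(T))$ provided by Theorem \ref{thm.i10}, after which (i)--(vi) become elementary statements about the derived-set operator $\mbox{acc}$ combined with classical facts about the ordinary spectrum. First I would record the topological facts used throughout: the derived set of any subset of $\C$ is closed, $\mbox{acc}\,(A\cup B)=\mbox{acc}\,A\cup \mbox{acc}\,B$, and deleting or adjoining a finite set changes neither $\mbox{acc}$ nor $\mbox{acc}\,(\mbox{acc})$.

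For (i), since $\sigma(T)$ is compact, $\mbox{acc}\,\sigma(T)$ is closed and bounded, hence compact, and applying $\mbox{acc}$ once more preserves compactness; thus $\sigma_{g_{z}d}(T)=\mbox{acc}\,(\mbox{acc}\,\sigma(T))$ is compact. For the one-sided spectra I would invoke Theorems \ref{thm.i7} and \ref{thm.i8}: translating characterisation $(vii)$ by $\lambda$ gives $\sigma_{lg_{z}d}(T)=\sigma_{g_{z}K}(T)\cup \mbox{acc}\,\sigma_{lgd}(T)$ and $\sigma_{rg_{z}d}(T)=\sigma_{g_{z}K}(T)\cup \mbox{acc}\,\sigma_{rgd}(T)$; each is the union of the compact set $\sigma_{g_{z}K}(T)$ with the derived set of a bounded spectrum, hence compact. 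For (ii) I would use $\sigma(T^{*})=\sigma(T)$, which makes $\mbox{acc}\,(\mbox{acc})$ of the two spectra literally equal. For (iii), the identity $\sigma(T\oplus S)=\sigma(T)\cup\sigma(S)$ together with the union formula for $\mbox{acc}$, applied twice, yields $\sigma_{g_{z}d}(T\oplus S)=\sigma_{g_{z}d}(T)\cup\sigma_{g_{z}d}(S)$, and testing membership of $0$ gives the stated equivalence.

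For (iv) I would establish the spectral-mapping identity $\mbox{acc}\,(\mbox{acc}\,\sigma(T^{n}))=\{\mu^{n}:\mu\in\mbox{acc}\,(\mbox{acc}\,\sigma(T))\}$. Since $\sigma(T^{n})=\{\mu^{n}:\mu\in\sigma(T)\}$ by the spectral mapping theorem, the crux is the lemma that for the continuous proper map $f(z)=z^{n}$ one has $f(\mbox{acc}\,A)=\mbox{acc}\,f(A)$ on compact $A$: properness yields $\supseteq$, and the fact that near a point the only collisions $f(\lambda_{k})=f(\lambda)$ arise from distinct $n$-th roots (which cannot converge to $\lambda$) yields $\subseteq$. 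As $\mu^{n}=0$ forces $\mu=0$, membership of $0$ is preserved, proving (iv). For (v) it suffices that a quasi-nilpotent $Q\in\mbox{comm}(T)$ satisfies $\sigma(T+Q)=\sigma(T)$, a standard perturbation fact, so that $\mbox{acc}\,(\mbox{acc})$ is unchanged.

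The main work is (vi), and I expect the decisive step to be the observation that a power finite rank operator $F$ has \emph{finite} spectrum: $F^{m}$ has finite rank, so $\sigma(F)^{m}=\sigma(F^{m})$ is finite and hence so is $\sigma(F)$. Using the Riesz functional calculus I would split $X=X_{0}\oplus X_{1}$ reducing both $T$ and $F$ (the spectral projections of $F$ commute with $T$), where $X_{1}$ carries the nonzero spectrum of $F$ and is finite-dimensional (nonzero isolated points of a Riesz operator have finite-rank spectral projections), while $F_{X_{0}}$ is a quasi-nilpotent power finite rank operator, hence nilpotent, commuting with $T_{X_{0}}$. Then $\sigma(T+F)=\sigma(T_{X_{0}}+F_{X_{0}})\cup(\text{finite})=\sigma(T_{X_{0}})\cup(\text{finite})$, using $\sigma(T_{X_{0}}+F_{X_{0}})=\sigma(T_{X_{0}})$ for the commuting quasi-nilpotent perturbation, and likewise $\sigma(T)=\sigma(T_{X_{0}})\cup(\text{finite})$. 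Thus $\sigma(T+F)$ and $\sigma(T)$ differ only by a finite set, so their $\mbox{acc}\,(\mbox{acc})$ coincide, and Theorem \ref{thm.i10} gives $\sigma_{g_{z}d}(T)=\sigma_{g_{z}d}(T+F)$. Once this finite-spectrum reduction is in place, the invariance of $\mbox{acc}\,(\mbox{acc})$ under finite perturbations of the spectrum closes the argument.
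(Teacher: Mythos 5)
Your proposal is correct and takes essentially the same route as the paper: the paper's entire proof is the one-line remark that the corollary follows from Theorem \ref{thm.i10} ($\sigma_{g_{z}d}(T)=\mbox{acc}\,(\mbox{acc}\,\sigma(T))$) together with well-known facts from spectral and perturbation theory, and your argument is a correct, detailed implementation of exactly that reduction. Your treatments of (iv) (the $z^{n}$ spectral-mapping identity for derived sets) and (vi) (the Riesz splitting off of the finite-dimensional part of a commuting power finite rank perturbation) simply supply the details the paper leaves to the reader.
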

\begin{ex}\label{exs} (i) Let $T \in L(X)$ be the    operator such that $\sigma(T)=\sigma_{d}(T)=\overline{\{\frac{1}{n}\}}.$ Then  $T$ is  $g_{z}$-invertible which  is not  generalized Drazin-meromorphic invertible, since $0\in \mbox{acc}\,\sigma_{d}(T)$ (see \cite[Theorem 5]{rwassa2}). Note also that  $T$ is not  generalized Kato-meromorphic. For this, suppose  that $T$ is generalized Kato-meromorphic then $\tilde{\alpha}(T)=\tilde{\beta}(T)=0$, since $T$ is $g_{z}$-invertible.  Hence $T$ is   generalized Drazin-meromorphic invertible and this  is a contradiction.
\end{ex}
\begin{prop}\label{prop.i12} Let $T \in L(X).$  The following statements are equivalent.\\
(i) $0\in \mbox{iso}\,(\mbox{acc}\,\sigma(T))$ (i.e. $T$ is  $g_{z}$-invertible   which  is not  generalized Drazin invertible);\\
(ii) $T = T_{1} \oplus T_{2};$  where $T_{1}$ is invertible and  $\mbox{acc}\,\sigma(T_{2})=\{0\};$\\
(iii) $T$ is  $g_{z}$-Kato  and there exists a non stationary  sequence  of isolated points of  $\sigma(T)$ that converges  to 0.
\end{prop}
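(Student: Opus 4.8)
The plan is to establish the three equivalences by first proving $(i)\Longleftrightarrow(ii)$ through a direct bookkeeping of accumulation points, and then closing the loop with $(ii)\Longrightarrow(iii)$ and $(iii)\Longrightarrow(ii)$. Throughout I will use that $\sigma(T)=\sigma(T_{M})\cup\sigma(T_{N})$ for $(M,N)\in\mbox{Red}(T)$, together with Theorem \ref{thm.i10}, which identifies $\sigma_{g_{z}d}(T)=\mbox{acc}\,(\mbox{acc}\,\sigma(T))$, and Remark \ref{rema.i2}(i), according to which $T$ is $g_{z}$-invertible exactly when there is $(M,N)\in g_{z}KD(T)$ with $T_{M}$ invertible. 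The parenthetical reformulation in $(i)$ is then automatic, since $0\in\mbox{iso}\,(\mbox{acc}\,\sigma(T))$ means $0\in\mbox{acc}\,\sigma(T)$ (failure of generalized Drazin invertibility, as $\sigma_{gd}(T)=\mbox{acc}\,\sigma(T)$) together with $0\notin\mbox{acc}\,(\mbox{acc}\,\sigma(T))=\sigma_{g_{z}d}(T)$.

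For $(i)\Longrightarrow(ii)$ I unpack $0\in\mbox{iso}\,(\mbox{acc}\,\sigma(T))$ as $0\in\mbox{acc}\,\sigma(T)$ and $0\notin\mbox{acc}\,(\mbox{acc}\,\sigma(T))$. The second assertion and Theorem \ref{thm.i10} give $0\notin\sigma_{g_{z}d}(T)$, so by Remark \ref{rema.i2}(i) one has $T=T_{M}\oplus T_{N}$ with $T_{M}$ invertible and $T_{N}$ zeroloid. Since $\sigma(T_{M})$ is compact and avoids $0$, near $0$ the spectrum of $T$ coincides with that of $T_{N}$; hence $0\in\mbox{acc}\,\sigma(T)$ forces $0\in\mbox{acc}\,\sigma(T_{N})$, and combined with $\mbox{acc}\,\sigma(T_{N})\subseteq\{0\}$ this yields $\mbox{acc}\,\sigma(T_{N})=\{0\}$. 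Setting $T_{1}=T_{M},\,T_{2}=T_{N}$ gives $(ii)$. For $(ii)\Longrightarrow(i)$ I use $\sigma(T)=\sigma(T_{1})\cup\sigma(T_{2})$ and the additivity $\mbox{acc}\,(A\cup B)=\mbox{acc}\,A\cup\mbox{acc}\,B$: since $\sigma(T_{1})$ is compact and bounded away from $0$ while $\mbox{acc}\,\sigma(T_{2})=\{0\}$, I read off $0\in\mbox{acc}\,\sigma(T)$ and, applying $\mbox{acc}$ once more, $0\notin\mbox{acc}\,(\mbox{acc}\,\sigma(T))$, which is $(i)$.

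For $(ii)\Longrightarrow(iii)$, write $X=X_{1}\oplus X_{2}$ with $T_{1}$ on $X_{1}$ and $T_{2}$ on $X_{2}$; the pair $(X_{1},X_{2})\in\mbox{Red}(T)$ has $T_{X_{1}}=T_{1}$ invertible (hence semi-regular) and $T_{X_{2}}=T_{2}$ zeroloid, so $(X_{1},X_{2})\in g_{z}KD(T)$ and $T$ is $g_{z}$-Kato by Example \ref{ex1.4}(i). As $\mbox{acc}\,\sigma(T_{2})=\{0\}$, every nonzero point of $\sigma(T_{2})$ is isolated, and since $\sigma(T_{1})$ is away from $0$ these are isolated in $\sigma(T)$ as well; choosing points of $\sigma(T_{2})$ tending to $0$ supplies the required non-stationary sequence. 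The decisive step is $(iii)\Longrightarrow(ii)$. Take $(M,N)\in g_{z}KD(T)$ with $T_{M}$ semi-regular and $T_{N}$ zeroloid, and let $(\lambda_{n})$ be the non-stationary sequence of isolated points of $\sigma(T)$ with $\lambda_{n}\to 0$. I claim $T_{M}$ is invertible. Indeed, $T_{M}$ semi-regular gives $0\notin\sigma_{se}(T_{M})$; since $\partial\sigma(T_{M})\subseteq\sigma_{se}(T_{M})$ (a consequence of the constancy of $\alpha$ and $\beta$ on the semi-regular region, Theorem \ref{thm1.5} and \cite[Theorem 4.7]{grabiner}), if $0$ lay in $\sigma(T_{M})$ it would lie in $\mbox{int}\,\sigma(T_{M})$, so some ball $B(0,r)\subseteq\sigma(T_{M})\subseteq\sigma(T)$. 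But then $\lambda_{n}\in B(0,r)$ for large $n$ would be interior points of $\sigma(T)$, contradicting their isolation. Hence $0\notin\sigma(T_{M})$, $T_{M}$ is invertible, and because $\sigma(T_{M})$ avoids a neighborhood of $0$ the $\lambda_{n}$ eventually lie in $\sigma(T_{N})$, so $0\in\mbox{acc}\,\sigma(T_{N})$ and, $T_{N}$ being zeroloid, $\mbox{acc}\,\sigma(T_{N})=\{0\}$. Taking $T_{1}=T_{M},\,T_{2}=T_{N}$ delivers $(ii)$.

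The main obstacle is precisely the invertibility of the semi-regular part $T_{M}$ in $(iii)\Longrightarrow(ii)$: the hypotheses furnish only isolated spectral points accumulating at $0$, and one must convert this into $0\notin\sigma(T_{M})$. The leverage is the dichotomy that a semi-regular spectral value is interior to $\sigma(T_{M})$, i.e. $\partial\sigma(T_{M})\subseteq\sigma_{se}(T_{M})$; everything else reduces to routine accumulation-point calculus. I would be careful to justify this containment from the stability of nullity and deficiency on the semi-regular region rather than invoke it as folklore, since this is exactly where the $g_{z}$-Kato structure meets the isolated-point hypothesis.
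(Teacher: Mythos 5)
Your proof is correct, and its decisive step takes a genuinely different route from the paper's. The paper proves the cycle (i) $\Rightarrow$ (ii) $\Rightarrow$ (iii) $\Rightarrow$ (i): the first implication is read off from the explicit spectral-set construction inside the proof of Theorem \ref{thm.i10}, and the closing implication (iii) $\Rightarrow$ (i) is handled with local spectral theory --- the non-stationary sequence of isolated points of $\sigma(T)$ converging to $0$ forces $0$ to be a limit of resolvent points, so $T\oplus T^{*}$ has the SVEP at $0$, and then $g_{z}$-Kato together with the SVEP of $T$ and $T^{*}$ at $0$ gives $\tilde{p}(T)=\tilde{q}(T)=0$ (via the apparatus of Proposition \ref{propa1} and Proposition \ref{prop2}), i.e. $g_{z}$-invertibility, hence $0\in\mbox{iso}\,(\mbox{acc}\,\sigma(T)).$ You instead prove (i) $\Leftrightarrow$ (ii) using only the statement (not the internal construction) of Theorem \ref{thm.i10} plus Remark \ref{rema.i2}(i) and accumulation-point bookkeeping, and you close the loop with (iii) $\Rightarrow$ (ii) through the purely spectral-topological inclusion $\partial\sigma(T_{M})\subseteq\sigma_{se}(T_{M}),$ justified by the constancy of nullity and deficiency near a semi-regular operator (\cite[Theorem 4.7]{grabiner}, the same ingredient underlying Theorem \ref{thm1.5}); this converts the isolated spectral points near $0$ into $0\notin\sigma(T_{M})$ with no appeal to SVEP, since an isolated point of $\sigma(T)$ cannot lie in an open ball contained in $\sigma(T).$ Your route is more elementary and self-contained --- it needs only the punctured-neighborhood machinery of Section 2 and delivers the structural decomposition (ii) directly --- whereas the paper's route is shorter given its already-developed local-spectral toolkit (implications (A) and (B), Proposition \ref{prop2}) and is consonant with the SVEP theme of its final section. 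Both arguments are sound; the only point you should make explicit, as you yourself note, is the boundary inclusion $\partial\sigma(T_{M})\subseteq\sigma_{se}(T_{M}),$ which follows exactly from the cited constancy theorem: if $0\in\sigma(T_{M})$ were semi-regular and a boundary point, then $\alpha(T_{M}-\lambda I)$ and $\beta(T_{M}-\lambda I)$ would be constant near $0$ and equal to $0$ at nearby resolvent points, making $T_{M}$ invertible, a contradiction.
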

\begin{proof}
(i) $\Longrightarrow$ (ii) Follows directly from the  proof of Theorem \ref{thm.i10}. Note   that $\mbox{acc}\,\sigma(T_{N})=\{0\}$ for every $(M,N) \in g_{z}KD(T).$\\
(ii) $\Longrightarrow$ (iii) As  $T = T_{1} \oplus T_{2},$   $T_{1}$ is invertible and  $\mbox{acc}\,\sigma(T_{2})=\{0\},$ then $0 \in \mbox{iso}\,(\mbox{acc}\,\sigma(T))$  and    there exists a non stationary  sequence   $(\lambda_{n})_{n} \subset \mbox{iso}\,\sigma(T_{2})$  which   converges to 0.  Thus $T$ is $g_{z}$-invertible and  there exists $N \in \mathbb{N}$ such that  $\lambda_{n} \in \sigma(T)\setminus\mbox{acc}\,\sigma(T)=\mbox{iso}\,\sigma(T)$ for all $n \geq N.$\\
(iii) $\Longrightarrow$ (i) Assume  that  $T = T_{1} \oplus T_{2};$   $T_{1}$ is semi-regular,  $T_{2}$ is zeroloid and  there exists  a non stationary  sequence $(\lambda_{n})_{n}$    of isolated point of  $\sigma(T)$ that converges to 0.  Hence  $0 \in \mbox{acc}\,\sigma(T)$ and   $T\oplus T^{*}$ has the SVEP at 0.  This entails    that $T$  is  $g_{z}$-invertible and  then  $0\in \mbox{iso}\,(\mbox{acc}\,\sigma(T)).$
\end{proof}
Recall that $\sigma\subset \sigma(T)$ is called  a spectral set (called also isolated part) of $T$ if  $\sigma$ and $\sigma(T)\setminus\sigma$ are closed, see \cite{gohbergbook}.\\
Let $T$ be  a $g_{z}$-invertible operator which is not generalized Drazin invertible.  From  Proposition \ref{prop.i12}, we conclude  that there exists a non-zero  strictly  decreasing sequence $(\lambda_{n})_{n}\subset \mbox{iso}\,\sigma(T)$  which    converges to 0 such that    $\sigma:=\overline{\{\lambda_{n} : n\in \N\}}$ is a spectral set  of $T.$    If    $P_{\sigma}$ is   the spectral projection associated to  $\sigma,$ then  $(M_{\sigma},N_{\sigma}):=(\mathcal{N}(P_{\sigma}),\R(P_{\sigma}))\in g_{z}KD(T),$ $\sigma(T_{N_{\sigma}})=\sigma$ and $\sigma(T_{M_{\sigma}})=\sigma(T)\setminus\sigma.$ Thus   $T+rP_{\sigma}=T_{M_{\sigma}}\oplus (T+rI)_{N_{\sigma}}$ is invertible for every $|r|> |\lambda_{0}|$ and then   the operator  $T^{D}_{\sigma}:=(T+rP_{\sigma})^{-1}(I-P_{\sigma})=(T_{M_{\sigma}})^{-1}\oplus 0_{N_{\sigma}}$ is  a $g_{z}$-inverse of $T$ and   depend only of $\sigma.$  Note  that  $P_{\sigma}=I-TT^{D}_{\sigma}\in \mbox{comm}^{2}(T):=\{S \in \mbox{comm}(L):  L \in \mbox{comm}(T)\},$ so    that $(M_{\sigma},N_{\sigma}) \in \mbox{Red}(S)$ for every  operator $S\in \mbox{comm}(T)$ and $T^{D}_{\sigma}\in   \mbox{comm}^{2}(T).$  Note also that $T+P_{\sigma}$ is generalized Drazin invertible and $TP_{\sigma}$ is zeroloid.
\begin{lem}\label{lemomom} Let $T \in L(X)$ be a $g_{z}$-invertible operator and  $(M,N)\in g_{z}KD(T)$ such that $T_{M}$ invertible and  $\sigma(T_{M})\cap\sigma(T_{N})=\emptyset.$ Then   $\sigma(T_{N})\setminus\{0\}\subset \mbox{iso}\,\sigma(T)$ and  for every $S\in \mbox{comm}(T),$  $(M,N)\in \mbox{Red}(S).$
\end{lem}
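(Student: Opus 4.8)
The plan is to prove the two assertions separately, both resting on the standard facts that $\sigma(T)=\sigma(T_{M})\cup\sigma(T_{N})$ whenever $(M,N)\in\mbox{Red}(T)$, and that two disjoint compact spectra can be separated by a Riesz (holomorphic functional calculus) projection. The hypothesis $\sigma(T_{M})\cap\sigma(T_{N})=\emptyset$ is what makes both parts go through.

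First I would establish $\sigma(T_{N})\setminus\{0\}\subset\mbox{iso}\,\sigma(T)$. Fix $\lambda\in\sigma(T_{N})\setminus\{0\}$. Since $T_{N}$ is zeroloid we have $\mbox{acc}\,\sigma(T_{N})\subset\{0\}$, so $\lambda\notin\mbox{acc}\,\sigma(T_{N})$ and there is $\epsilon_{1}>0$ with $B(\lambda,\epsilon_{1})\cap\sigma(T_{N})=\{\lambda\}$. On the other hand $\lambda\notin\sigma(T_{M})$ because $\sigma(T_{M})\cap\sigma(T_{N})=\emptyset$, and $\sigma(T_{M})$ is closed, so there is $\epsilon_{2}>0$ with $B(\lambda,\epsilon_{2})\cap\sigma(T_{M})=\emptyset$. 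Taking $\epsilon=\min\{\epsilon_{1},\epsilon_{2}\}$ and using $\sigma(T)=\sigma(T_{M})\cup\sigma(T_{N})$, I obtain $B(\lambda,\epsilon)\cap\sigma(T)=\{\lambda\}$, hence $\lambda\in\mbox{iso}\,\sigma(T)$.

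For the second assertion I would use the Riesz projection attached to the spectral set $\sigma(T_{N})$. Since $\sigma(T_{M})$ and $\sigma(T_{N})$ are disjoint compact sets, I can choose a contour $\Gamma\subset(\sigma(T))^{C}$ enclosing $\sigma(T_{N})$ but not $\sigma(T_{M})$ and set $P=\frac{1}{2\pi i}\oint_{\Gamma}(zI-T)^{-1}\,dz$. Because $(M,N)\in\mbox{Red}(T)$, on $\Gamma$ the resolvent splits as $(zI-T_{M})^{-1}\oplus(zI-T_{N})^{-1}$, so that $P=0_{M}\oplus I_{N}$: the two scalar Cauchy integrals give $0$ on $M$ (as $\Gamma$ does not wind around $\sigma(T_{M})$) and $I_{N}$ on $N$. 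Thus $P$ is precisely the projection onto $N$ along $M$, with $\R(P)=N$ and $\mathcal{N}(P)=M$. Now if $S\in\mbox{comm}(T)$, then $S$ commutes with every $(zI-T)^{-1}$, and moving $S$ under the integral yields $SP=PS$; consequently $S(N)=S\R(P)\subset\R(P)=N$ and $S(M)=S\mathcal{N}(P)\subset\mathcal{N}(P)=M$, so both $M$ and $N$ are $S$-invariant and $(M,N)\in\mbox{Red}(S)$.

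The computations are routine; the only delicate point is verifying that the Riesz projection coincides with the algebraic projection $0_{M}\oplus I_{N}$ determined by the decomposition, which is exactly where the disjointness $\sigma(T_{M})\cap\sigma(T_{N})=\emptyset$ is indispensable, since it allows $\Gamma$ to separate the two pieces and forces the two scalar integrals to evaluate to $0$ and $1$. This also dovetails with the remark preceding the lemma, where the spectral projection $P_{\sigma}$ was already seen to lie in $\mbox{comm}^{2}(T)$.
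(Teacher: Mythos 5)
Your proof is correct, and its engine is the same as the paper's: identify the projection onto $N$ along $M$ with the Riesz spectral projection attached to the spectral set $\sigma(T_{N})$, then use that this projection commutes with everything in $\mbox{comm}(T)$. The differences are in execution. The paper splits into two cases according to whether $T$ is generalized Drazin invertible, in order to describe $\sigma(T_{N})$ explicitly (a finite set of isolated points in the first case, the closure of a sequence of isolated points decreasing to $0$ in the second), and then invokes \cite[Proposition 2.4]{gohbergbook} together with $P_{\sigma}\in\mbox{comm}^{2}(T)$ to get $(M,N)=(\mathcal{N}(P_{\sigma}),\R(P_{\sigma}))\in\mbox{Red}(S)$. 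You avoid the case split entirely: zeroloidness of $T_{N}$ plus the disjointness $\sigma(T_{M})\cap\sigma(T_{N})=\emptyset$ already give $\sigma(T_{N})\setminus\{0\}\subset\mbox{iso}\,\sigma(T)$ in one stroke, and you verify by hand, via the splitting of the resolvent $(zI-T)^{-1}=(zI-T_{M})^{-1}\oplus(zI-T_{N})^{-1}$ along $X=M\oplus N$, that the contour integral equals $0_{M}\oplus I_{N}$, which makes the double-commutant property self-contained rather than cited. What your route buys is a shorter, unified, citation-free proof of exactly the statement at hand; what the paper's route buys is the finer structural description of $\sigma(T_{N})$ (finite versus a sequence accumulating only at $0$), which it reuses in the surrounding results on $g_{z}$-inverses associated to spectral sets.
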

\begin{proof}   If $T$ is generalized Drazin invertible, then   $0\notin \mbox{acc}\,\sigma(T)$ and so $\mbox{acc}\,\sigma(T_{N})=\emptyset,$ hence $\sigma(T_{N})$ is a finite set of isolated points of $\sigma(T).$    Let $P_{\sigma}$  the spectral projection associated to $\sigma=\sigma(T_{N}),$  \cite[Proposition 2.4]{gohbergbook} and the fact that $P_{\sigma} \in \mbox{comm}^{2}(T)$  imply that  $(M,N)=(\mathcal{N}(P_{\sigma}),\R(P_{\sigma})) \in \mbox{Red}(S)$  for every $S\in \mbox{comm}(T).$   If   $T$ isn't generalized Drazin invertible, then there exists a non-zero  strictly  decreasing sequence $(\lambda_{n})_{n}$ which    converges to 0 such that $\sigma(T_{N})=\overline{\{\lambda_{n} : n\in \N\}}.$ On the other hand,     there exists $\epsilon>0$ such that $B(0,\epsilon)\cap\sigma(T_{M})=\emptyset$ and hence $\sigma(T_{N})\setminus\{0\}\subset \mbox{iso}\,\sigma(T).$   Let  $P$ be   the spectral projection associated to the spectral set $\sigma(T_{N}),$ then $(M,N)=(\mathcal{N}(P),\R(P))$ and so   $(M,N)\in \mbox{Red}(S)$ for every $S\in \mbox{comm}(T).$
\end{proof}
\begin{rema}  It is  not difficult  to see that the condition,  $\exists (M,N)\in \mbox{Red}(S)$ such that $T_{M}$ is invertible   for every $S\in \mbox{comm}(T)$  is equivalent to   $\exists L\in \mbox{comm}^{2}(T)$ such that $L=L^{2}T.$
\end{rema}
\begin{thm} Let $T\in L(X).$ Then the following statements are equivalent.
\begin{enumerate}[nolistsep]
\item[(i)] $T$  is $g_{z}$-invertible;
\item[(ii)] $0\notin \mbox{acc}(\mbox{acc}\,\sigma(T))$;
\item[(iii)]  there exists $(M,N)\in g_{z}KD(T)$ such that $T_{M}$ invertible and  $\sigma(T_{M})\cap\sigma(T_{N})=\emptyset;$
\item[(iv)] there exists  a spectral set $\sigma$   of $T$ such that  $0\notin \sigma(T)\setminus\sigma$    and  $\sigma\setminus\{0\}\subset \mbox{iso}\,\sigma(T).$
\item[(v)] There exists a bounded projection   $P\in \mbox{comm}^{2}(T)$    such that  $T+P$ is generalized Drazin invertible    and  $TP$ is zeroloid.
\end{enumerate}
\end{thm}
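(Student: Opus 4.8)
The plan is to close a cycle of implications, leaning on the machinery already developed. The backbone equivalence (i) $\Longleftrightarrow$ (ii) is immediate from Theorem \ref{thm.i10}, since $T$ being $g_{z}$-invertible means precisely $0\notin\sigma_{g_{z}d}(T)=\mbox{acc}(\mbox{acc}\,\sigma(T))$. Two further implications are essentially free: (iii) $\Longrightarrow$ (i) holds because the existence of $(M,N)\in g_{z}KD(T)$ with $T_{M}$ invertible is, by Remark \ref{rema.i2}(i), the very definition of $g_{z}$-invertibility; and (v) $\Longrightarrow$ (i) holds because $P\in\mbox{comm}^{2}(T)$ in particular commutes with $T$, so condition (v) here reduces to characterization (v) of Theorem \ref{thm.i6}. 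It therefore remains to establish (ii) $\Longrightarrow$ (iii), the equivalence (iii) $\Longleftrightarrow$ (iv), and (i) $\Longrightarrow$ (v).

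For (ii) $\Longrightarrow$ (iii) I would reuse the decomposition built in the forward direction of the proof of Theorem \ref{thm.i10}. There, assuming $0\notin\mbox{acc}(\mbox{acc}\,\sigma(T))$, one splits $\sigma(T)=F_{1}\cup F_{2}$ into disjoint closed sets and produces $(M,N)\in\mbox{Red}(T)$ with $\sigma(T_{M})=F_{1}$ (so $0\notin\sigma(T_{M})$) and $\sigma(T_{N})=F_{2}$ satisfying $\mbox{acc}\,\sigma(T_{N})\subset\{0\}$. Since $T_{M}$ is then invertible (hence semi-regular) and $T_{N}$ is zeroloid, this is a pair in $g_{z}KD(T)$ with $T_{M}$ invertible and $\sigma(T_{M})\cap\sigma(T_{N})=F_{1}\cap F_{2}=\emptyset$, which is exactly (iii).

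For the equivalence (iii) $\Longleftrightarrow$ (iv): given (iii), Lemma \ref{lemomom} yields $\sigma(T_{N})\setminus\{0\}\subset\mbox{iso}\,\sigma(T)$, so taking $\sigma:=\sigma(T_{N})$ (a spectral set, as $\sigma(T)=\sigma(T_{M})\cup\sigma(T_{N})$ splits into disjoint closed pieces) gives $0\notin\sigma(T)\setminus\sigma=\sigma(T_{M})$ and $\sigma\setminus\{0\}\subset\mbox{iso}\,\sigma(T)$, i.e. (iv). Conversely, from a spectral set $\sigma$ as in (iv) I would take the associated spectral (Riesz) projection $P_{\sigma}$ and set $(M,N):=(\mathcal{N}(P_{\sigma}),\R(P_{\sigma}))$, so that $\sigma(T_{M})=\sigma(T)\setminus\sigma$ and $\sigma(T_{N})=\sigma$; the hypothesis $0\notin\sigma(T)\setminus\sigma$ forces $T_{M}$ invertible, while $T_{N}$ is zeroloid because any nonzero $\lambda\in\mbox{acc}\,\sigma(T_{N})=\mbox{acc}\,\sigma\subset\mbox{acc}\,\sigma(T)$ would also lie in $\sigma\setminus\{0\}\subset\mbox{iso}\,\sigma(T)$, a contradiction. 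The spectra being disjoint by construction, this is (iii).

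Finally, for (i) $\Longrightarrow$ (v) I would again invoke a spectral projection, and this is the step I expect to be the crux, because it must deliver a projection in the double commutant $\mbox{comm}^{2}(T)$ rather than merely one commuting with $T$. If $T$ is generalized Drazin invertible the case is easy: either $0\notin\sigma(T)$ and $P=0$ works, or $0\in\mbox{iso}\,\sigma(T)$ and the Riesz projection $P$ for $\{0\}$ lies in $\mbox{comm}^{2}(T)$, with $T+P$ invertible (its two restrictions having spectra bounded away from $0$) and $TP$ quasi-nilpotent, hence zeroloid. If $T$ is not generalized Drazin invertible, I would use the spectral set $\sigma=\overline{\{\lambda_{n}\}}$ and projection $P_{\sigma}$ already exhibited in the discussion preceding Lemma \ref{lemomom}, where it is shown that $P_{\sigma}=I-TT^{D}_{\sigma}\in\mbox{comm}^{2}(T)$, that $T+P_{\sigma}$ is generalized Drazin invertible, and that $TP_{\sigma}$ is zeroloid. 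The main obstacle is verifying the membership $P_{\sigma}\in\mbox{comm}^{2}(T)$ uniformly across both cases; this rests on the standard fact that a Riesz idempotent commutes with every operator commuting with $T$, made explicit through the formula $P_{\sigma}=I-TT^{D}_{\sigma}$.
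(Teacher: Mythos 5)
Your proposal is correct and follows essentially the same route as the paper: (i) $\Longleftrightarrow$ (ii) via Theorem \ref{thm.i10}, and the implications involving (iii), (iv), (v) via the spectral-set/Riesz-projection construction of the paragraph preceding Lemma \ref{lemomom} (including $P_{\sigma}=I-TT^{D}_{\sigma}\in \mbox{comm}^{2}(T)$) together with Lemma \ref{lemomom}. The only difference is that you spell out the details the paper leaves to the reader (the (iii) $\Longleftrightarrow$ (iv) equivalence and the generalized Drazin invertible case), which is a faithful completion rather than a new approach.
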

\begin{proof} The equivalence (i) $\Longleftrightarrow$ (ii) is proved in Theorem \ref{thm.i10} and the equivalences (i) $\Longleftrightarrow$ (iii) and (i) $\Longleftrightarrow$ (v) are    proved in the paragraph preceding  Lemma \ref{lemomom} (the  case of $T$ is generalized Drazin invertible  is clear). The proof of the   equivalence (i) $\Longleftrightarrow$ (iv) is based on the   same techniques used    in the one  of Lemma  \ref{lemomom} and we left it to the reader.
\end{proof}
\begin{prop}\label{prop.unicinverse} For every  $T \in L(X)$ $g_{z}$-invertible, the following implications hold.\\
(i) Let   $(M,N),(M^{'},N^{'})\in g_{z}KD(T)$  such that $T_{M},$ $T_{M^{'}}$ are invertible and    $\sigma(T_{M})\cap\sigma(T_{N})=\sigma(T_{M^{'}})\cap\sigma(T_{N^{'}})=\emptyset.$  If $(T_{M})^{-1}\oplus 0_{N}=(T_{M^{'}})^{-1}\oplus 0_{N^{'}},$   then $(M,N)=(M^{'},N^{'}).$\\
(ii) Let   $\sigma,\sigma^{'}$ two spectral sets of $T$  such that  $0\notin \sigma(T)\setminus(\sigma\cap\sigma^{'})$    and  $(\sigma\cup\sigma^{'})\setminus\{0\}\subset \mbox{iso}\,\sigma(T).$ If $(T+rP_{\sigma})^{-1}(I-P_{\sigma})=(T+r^{'}P_{\sigma^{'}})^{-1}(I-P_{\sigma^{'}});$ where $P_{\sigma}$ is the spectral projection of $T$  associated to $\sigma,$ $|r|>\underset{\lambda\in  \sigma}{\mbox{max}}\,|\lambda|$ and $|r^{'}|>\underset{\lambda\in  \sigma^{'}}{\mbox{max}}\,|\lambda|,$    then $\sigma=\sigma^{'}.$
\end{prop}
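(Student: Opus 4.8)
The plan is to reduce both statements to one observation: for a $g_{z}$-invertible operator the associated $g_{z}$-inverse $S$ recovers, through the product $TS$, the bounded projection that realizes the decomposition. Once this is in hand, equality of the $g_{z}$-inverses forces equality of the projections, hence of their ranges and kernels, and both parts fall out quickly.

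For (i) I would compute $TS$ and $ST$ directly in the splitting $X = M \oplus N$. Since both $T = T_{M} \oplus T_{N}$ and $S = (T_{M})^{-1} \oplus 0_{N}$ are diagonal with respect to this decomposition, they commute and $P := TS = ST = I_{M} \oplus 0_{N}$ is the bounded projection with $\R(P) = M$ and $\mathcal{N}(P) = N$. Carrying out the same computation in the second decomposition $X = M^{'} \oplus N^{'}$ gives $TS = I_{M^{'}} \oplus 0_{N^{'}}$, the projection with range $M^{'}$ and kernel $N^{'}$. But $S$ is a single operator by hypothesis, so $TS$ is one fixed operator; equating the two expressions yields $I_{M} \oplus 0_{N} = I_{M^{'}} \oplus 0_{N^{'}}$, and reading off the range and kernel of this common projection gives $M = \R(TS) = M^{'}$ and $N = \mathcal{N}(TS) = N^{'}$, which is the claim.

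For (ii) I would transport the hypothesis to the setting of (i) using the description of $T^{D}_{\sigma}$ from the paragraph preceding Lemma \ref{lemomom}: with $(M_{\sigma}, N_{\sigma}) := (\mathcal{N}(P_{\sigma}), \R(P_{\sigma}))$ one has $T^{D}_{\sigma} = (T + rP_{\sigma})^{-1}(I - P_{\sigma}) = (T_{M_{\sigma}})^{-1} \oplus 0_{N_{\sigma}}$, with $\sigma(T_{M_{\sigma}}) = \sigma(T) \setminus \sigma$ and $\sigma(T_{N_{\sigma}}) = \sigma$, and likewise for $\sigma^{'}$. Before invoking (i) I must check that $(M_{\sigma}, N_{\sigma})$ and $(M_{\sigma^{'}}, N_{\sigma^{'}})$ are admissible inputs: $T_{M_{\sigma}}$ is invertible because $0 \notin \sigma(T) \setminus (\sigma \cap \sigma^{'})$ forces $0 \notin \sigma(T) \setminus \sigma = \sigma(T_{M_{\sigma}})$; $T_{N_{\sigma}}$ is zeroloid because $(\sigma \cup \sigma^{'}) \setminus \{0\} \subset \mbox{iso}\,\sigma(T)$ makes every point of $\sigma \setminus \{0\}$ isolated in $\sigma$, so $\mbox{acc}\,\sigma(T_{N_{\sigma}}) = \mbox{acc}\,\sigma \subset \{0\}$; and $\sigma(T_{M_{\sigma}}) \cap \sigma(T_{N_{\sigma}}) = (\sigma(T) \setminus \sigma) \cap \sigma = \emptyset$, with the same conclusions for $\sigma^{'}$. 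The equality of $g_{z}$-inverses then reads $(T_{M_{\sigma}})^{-1} \oplus 0_{N_{\sigma}} = (T_{M_{\sigma^{'}}})^{-1} \oplus 0_{N_{\sigma^{'}}}$, so (i) gives $(M_{\sigma}, N_{\sigma}) = (M_{\sigma^{'}}, N_{\sigma^{'}})$; in particular $N_{\sigma} = N_{\sigma^{'}}$, whence $T_{N_{\sigma}} = T_{N_{\sigma^{'}}}$ and therefore $\sigma = \sigma(T_{N_{\sigma}}) = \sigma(T_{N_{\sigma^{'}}}) = \sigma^{'}$.

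The argument for (i) is essentially the one-line computation $TS = I_{M} \oplus 0_{N}$, so I expect no real difficulty there. The only point demanding genuine care is the bookkeeping in (ii): one must confirm that the two spectral decompositions really lie in $g_{z}KD(T)$ with invertible $T_{M}$ and disjoint spectral parts, which is precisely where the two standing hypotheses on $\sigma, \sigma^{'}$ (the location of $0$ and the isolation of the nonzero points) are consumed. I would therefore verify these inclusions first, since without them Proposition \ref{prop.unicinverse}(i) cannot be applied.
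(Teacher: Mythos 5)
Your proof is correct, but for part (i) it takes a genuinely different and more elementary route than the paper's. The paper proves (i) by identifying each decomposition with a spectral-projection decomposition: by the proof of Lemma \ref{lemomom}, $\sigma:=\sigma(T_{N})$ and $\sigma^{'}:=\sigma(T_{N^{'}})$ are spectral sets of $T$ with $(M,N)=(\mathcal{N}(P_{\sigma}),\R(P_{\sigma}))$ and $(M^{'},N^{'})=(\mathcal{N}(P_{\sigma^{'}}),\R(P_{\sigma^{'}}))$; equality of the two $g_{z}$-inverses forces $\sigma(T_{M})=\sigma(T_{M^{'}})$, hence (using disjointness, so that $\sigma(T_{N})=\sigma(T)\setminus\sigma(T_{M})$) $\sigma=\sigma^{'}$, and therefore the spectral projections and the decompositions coincide. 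You instead observe that $TS=ST=I_{M}\oplus 0_{N}$ is exactly the bounded projection with range $M$ and kernel $N$, and simultaneously equals $I_{M^{'}}\oplus 0_{N^{'}}$, so the decompositions coincide by reading off the range and kernel of the single operator $TS$. This avoids Lemma \ref{lemomom} and the functional calculus entirely, and, worth noting, it never uses the hypothesis $\sigma(T_{M})\cap\sigma(T_{N})=\sigma(T_{M^{'}})\cap\sigma(T_{N^{'}})=\emptyset$: your argument proves the formally stronger statement that the operator $(T_{M})^{-1}\oplus 0_{N}$ alone determines the pair $(M,N)$. The disjointness hypotheses re-enter only where you actually consume them, namely in (ii), to guarantee $\sigma(T_{N_{\sigma}})=\sigma$, $\sigma(T_{M_{\sigma}})=\sigma(T)\setminus\sigma$, the invertibility of $T_{M_{\sigma}}$ and the identity $T^{D}_{\sigma}=(T_{M_{\sigma}})^{-1}\oplus 0_{N_{\sigma}}$. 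For (ii) the paper writes only ``follows from (i)''; your admissibility checks (invertibility of $T_{M_{\sigma}}$ from $0\notin\sigma(T)\setminus\sigma$, zeroloidness of $T_{N_{\sigma}}$ from $(\sigma\cup\sigma^{'})\setminus\{0\}\subset\mbox{iso}\,\sigma(T)$, automatic disjointness) supply precisely the bookkeeping the paper omits, so on that part you and the paper agree in substance.
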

\begin{proof} (i)   From the proof of Lemma \ref{lemomom}, we have $(M,N)=(\mathcal{N}(P_{\sigma}),\R(P_{\sigma}))$ and $(M^{'},N^{'})=(\mathcal{N}(P_{\sigma^{'}}),\R(P_{\sigma^{'}}));$ where $\sigma=\sigma(T_{N})$ and $\sigma^{'}=\sigma(T_{N^{'}}).$  As  $(T_{M})^{-1}\oplus 0_{N}=(T_{M^{'}})^{-1}\oplus 0_{N^{'}}$ then $\sigma(T_{M})=\sigma(T_{M^{'}})$ and thus  $\sigma(T_{N})=\sigma(T_{N^{'}}).$   This  proves  that $(M,N)=(M^{'},N^{'}).$\\
(ii) Follows  from (i).
\end{proof}
The previous Proposition \ref{prop.unicinverse} gives a sense to the next remark.
\begin{rema}\label{remadoublecomm}   If  $T\in L(X)$  is $g_{z}$-invertible, then\\
(i)  For every $(M,N)\in g_{z}KD(T)$ such that $T_{M}$ is invertible and  $\sigma(T_{M})\cap\sigma(T_{N})=\emptyset,$ the $g_{z}$-inverse  operator   $S:=(T_{M})^{-1}\oplus 0_{N} \in \mbox{comm}^{2}(T),$ and we call  $S$ the  $g_{z}$-inverse of $T$ associated to $(M,N).$\\
(ii) If  $\sigma$  is  a spectral set of $T$ such that $0\notin \sigma(T)\setminus\sigma$    and  $\sigma\setminus\{0\}\subset \mbox{iso}\,\sigma(T),$  then the  operator $T^{D}_{\sigma}:=(T+rP_{\sigma})^{-1}(I-P_{\sigma})\in \mbox{comm}^{2}(T)$ is a $g_{z}$-inverse of $T;$ where $|r|>\underset{\lambda\in  \sigma}{\mbox{max}}\,|\lambda|,$ and we call      $T^{D}_{\sigma}$ the $g_{z}$-inverse of $T$ associated to $\sigma.$
\end{rema}
If $T\in L(X)$ is generalized Drazin invertible which is not invertible, then by  \cite[Lemma 2.4]{koliha} and Proposition \ref{prop.unicinverse} we conclude that the  Drazin inverse $T^{D}$ (see \cite{koliha}) is exactly the $g_{z}$-inverse of $T$  associated to $\sigma=\{0\}.$

\medskip

\begin{prop} Let  $T, S \in L(X)$  two  commuting $g_{z}$-invertible. If  $\sigma$    and $\sigma^{'}$ are  spectral  sets of $T$ and $S,$ respectively such that  $0\notin(  \sigma(T)\setminus\sigma) \cup (\sigma(S)\setminus\sigma^{'}),$   $\sigma\setminus\{0\}\subset\mbox{iso}\,\sigma(T)$ and $\sigma^{'}\setminus\{0\}\subset\mbox{iso}\,\sigma(S),$  then        $T, S, T^{D}_{\sigma},S^{D}_{\sigma^{'}}$   are mutually commutative.
\end{prop}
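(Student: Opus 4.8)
The plan is to notice that the whole statement reduces to the double-commutant property recorded in Remark \ref{remadoublecomm}, so that no fresh spectral analysis is required. First I would invoke Remark \ref{remadoublecomm}(ii): since $\sigma$ is a spectral set of the $g_z$-invertible operator $T$ with $0\notin\sigma(T)\setminus\sigma$ and $\sigma\setminus\{0\}\subset\mbox{iso}\,\sigma(T)$, the associated $g_z$-inverse satisfies $T^D_\sigma\in\mbox{comm}^2(T)$; symmetrically, the hypotheses on $\sigma'$ give $S^D_{\sigma'}\in\mbox{comm}^2(S)$. Recall that membership in $\mbox{comm}^2(T)$ means commuting with \emph{every} operator that commutes with $T$. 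These two memberships, together with the standing hypothesis $TS=ST$, are the only structural inputs needed; the role of the spectral-set hypotheses is precisely to guarantee that $T^D_\sigma$ and $S^D_{\sigma'}$ are the well-defined bicommutant $g_z$-inverses.

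Next I would dispatch the pairs that involve $T$ or $S$ directly. Since $T\in\mbox{comm}(T)$ and $T^D_\sigma\in\mbox{comm}^2(T)$, we get $TT^D_\sigma=T^D_\sigma T$; likewise $SS^D_{\sigma'}=S^D_{\sigma'}S$. The hypothesis $TS=ST$ says simultaneously that $T\in\mbox{comm}(S)$ and $S\in\mbox{comm}(T)$. Feeding $S\in\mbox{comm}(T)$ into $T^D_\sigma\in\mbox{comm}^2(T)$ yields $T^D_\sigma S=ST^D_\sigma$, and feeding $T\in\mbox{comm}(S)$ into $S^D_{\sigma'}\in\mbox{comm}^2(S)$ yields $S^D_{\sigma'}T=TS^D_{\sigma'}$. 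Together with the trivial pair $TS=ST$ this settles five of the six pairs.

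The only remaining pair, $T^D_\sigma$ and $S^D_{\sigma'}$, is the single point that needs a short chaining argument, and is the main (though still routine) obstacle. From the previous step $S^D_{\sigma'}$ commutes with $T$, that is $S^D_{\sigma'}\in\mbox{comm}(T)$; since $T^D_\sigma\in\mbox{comm}^2(T)$ commutes with every member of $\mbox{comm}(T)$, it in particular commutes with $S^D_{\sigma'}$. (Equivalently one may use $T^D_\sigma\in\mbox{comm}(S)$ from the previous step together with $S^D_{\sigma'}\in\mbox{comm}^2(S)$; both routes give $T^D_\sigma S^D_{\sigma'}=S^D_{\sigma'}T^D_\sigma$.) Having verified all six pairs, the four operators $T$, $S$, $T^D_\sigma$, $S^D_{\sigma'}$ are mutually commutative, which completes the proof. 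I expect no genuine difficulty here: the substantive work of placing the $g_z$-inverses in the bicommutant was already carried out in Lemma \ref{lemomom} and Remark \ref{remadoublecomm}, and this proposition is essentially a formal consequence of that fact.
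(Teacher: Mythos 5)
Your proposal is correct and follows essentially the same route as the paper: the paper's one-line proof likewise invokes the remark placing $T^{D}_{\sigma}\in \mbox{comm}^{2}(T)$ and $S^{D}_{\sigma'}\in \mbox{comm}^{2}(S)$ and then chains through $TS=ST$ exactly as you do. Your write-up merely makes explicit the pairwise bookkeeping that the paper leaves to the phrase ``and analogously for other operators.''
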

\begin{proof}
As $TS=ST$ then the  previous remark entails that $T^{D}_{\sigma}=(T+rP_{\sigma})^{-1}(I-P_{\sigma}) \in \mbox{comm}(S^{D}_{\sigma^{'}}),$   and analogously for other operators.
\end{proof}
The following proposition  describe the relation between the $g_{z}$-inverse of a $g_{z}$-invertible operator $T$  associated to $(M,N)$ and the $g_{z}$-inverse of $T$ associated to a spectral set $\sigma.$ It's proof is clear.
\begin{prop} If  $T \in L(X)$   is  $g_{z}$-invertible, then the following are equivalent.
\begin{enumerate}[nolistsep]
\item[(i)] $S$ is a  $g_{z}$-inverse of $T$  associated to $(M,N);$
\item[(ii)]  $\sigma:=\sigma(T_{N})$ is a spectral set of $T,$ $S$ is  the    $g_{z}$-inverse of $T$ associated to  $\sigma$ and $M=\mathcal{N}(P_{\sigma}).$
\end{enumerate}
In other words, if  that  is the case then     $T^{D}_{\sigma(T_{N})}=(T_{M})^{-1}\oplus 0_{N}.$
\end{prop}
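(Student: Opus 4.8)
The plan is to reduce both implications to a single structural fact: for the reducing pair $(M,N)$ in play, the Riesz spectral projection $P_{\sigma}$ attached to $\sigma:=\sigma(T_{N})$ is nothing but the projection $0_{M}\oplus I_{N}$ of $X=M\oplus N$ onto $N$ along $M$. First I would check that $\sigma$ is indeed a spectral set of $T$ whenever $(M,N)\in g_{z}KD(T)$ has $T_{M}$ invertible and $\sigma(T_{M})\cap\sigma(T_{N})=\emptyset$: since $(M,N)\in\mbox{Red}(T)$ gives $\sigma(T)=\sigma(T_{M})\cup\sigma(T_{N})$ with the two compact pieces disjoint, both $\sigma(T_{N})$ and $\sigma(T)\setminus\sigma(T_{N})=\sigma(T_{M})$ are closed, which is the definition of a spectral set. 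Moreover $0\notin\sigma(T)\setminus\sigma=\sigma(T_{M})$ because $T_{M}$ is invertible, and $\sigma\setminus\{0\}\subset\mbox{iso}\,\sigma(T)$ because $T_{N}$ is zeroloid (so $\mbox{acc}\,\sigma(T_{N})\subset\{0\}$, and a nonzero point of the clopen part $\sigma$ of $\sigma(T)$ that is isolated in $\sigma(T_{N})$ is isolated in $\sigma(T)$). Hence $\sigma$ satisfies the hypotheses of Remark \ref{remadoublecomm}(ii) and $T^{D}_{\sigma}$ is well defined.

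Next I would identify $P_{\sigma}$. Because $X=M\oplus N$ reduces $T$ and the component spectra split $\sigma(T)$ into the clopen pieces $\sigma(T_{M})=\sigma(T)\setminus\sigma$ and $\sigma(T_{N})=\sigma$, uniqueness of the spectral decomposition associated to a spectral set — exactly the argument already used in Lemma \ref{lemomom} via \cite{gohbergbook} — forces $P_{\sigma}=0_{M}\oplus I_{N}$, so that $\mathcal{N}(P_{\sigma})=M$ and $\R(P_{\sigma})=N$. With this in hand, for $|r|>\max_{\lambda\in\sigma}|\lambda|$ I compute $T+rP_{\sigma}=T_{M}\oplus(T_{N}+rI_{N})$ with $T_{N}+rI_{N}$ invertible (as $-r\notin\sigma(T_{N})$) and $I-P_{\sigma}=I_{M}\oplus 0_{N}$, whence
$$T^{D}_{\sigma}=(T+rP_{\sigma})^{-1}(I-P_{\sigma})=(T_{M})^{-1}\oplus 0_{N},$$
which is precisely the asserted identity $T^{D}_{\sigma(T_{N})}=(T_{M})^{-1}\oplus 0_{N}.$

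It then remains to assemble the equivalence. For (i)$\Rightarrow$(ii), the data of (i) — $(M,N)\in g_{z}KD(T)$ with $T_{M}$ invertible, $\sigma(T_{M})\cap\sigma(T_{N})=\emptyset$, and $S=(T_{M})^{-1}\oplus 0_{N}$ — yield, by the two preceding paragraphs, that $\sigma=\sigma(T_{N})$ is a spectral set, that $M=\mathcal{N}(P_{\sigma})$, and that $S=T^{D}_{\sigma}$. For (ii)$\Rightarrow$(i), since $M=\mathcal{N}(P_{\sigma})$ the spectral decomposition theorem gives $\sigma(T_{M})=\sigma(T)\setminus\sigma$, so that $T_{M}$ is invertible (as $0\notin\sigma(T)\setminus\sigma$), $T_{N}$ is zeroloid (as $\sigma\setminus\{0\}\subset\mbox{iso}\,\sigma(T)$ forces $\mbox{acc}\,\sigma(T_{N})\subset\{0\}$) and $\sigma(T_{M})\cap\sigma(T_{N})=\emptyset$; the identification of the preceding paragraph then gives $N=\R(P_{\sigma})$ and $S=T^{D}_{\sigma}=(T_{M})^{-1}\oplus 0_{N}$, so $(M,N)\in g_{z}KD(T)$ and $S$ is the $g_{z}$-inverse of $T$ associated to $(M,N)$.

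The only genuinely non-formal ingredient, and the step I expect to need the most care, is the identification $P_{\sigma}=0_{M}\oplus I_{N}$ — that the reducing projection determined by $(M,N)$ is the Riesz spectral projection of the spectral set $\sigma(T_{N})$. Everything else is bookkeeping with direct sums and the holomorphic functional calculus. Since this identification is exactly the content already invoked in the proof of Lemma \ref{lemomom}, in the final write-up it can be cited there rather than reproved, which is why the authors can legitimately call the proof ``clear.''
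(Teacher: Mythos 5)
Your proof is correct and follows exactly the route the paper has in mind when it declares the proof ``clear'': the key identification $P_{\sigma}=0_{M}\oplus I_{N}$ via the block-diagonal functional calculus (precisely the content of Lemma \ref{lemomom} and the paragraph preceding it), followed by the computation $T^{D}_{\sigma}=(T+rP_{\sigma})^{-1}(I-P_{\sigma})=(T_{M})^{-1}\oplus 0_{N}$. You also handle the one genuinely delicate point correctly: in the direction (ii) $\Longrightarrow$ (i) you first derive $\sigma(T_{M})=\sigma(T)\setminus\sigma$ and the disjointness $\sigma(T_{M})\cap\sigma(T_{N})=\emptyset$ before invoking the identification to conclude $N=\R(P_{\sigma})$, so no gap remains.
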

Our  next theorem gives a generalization of \cite[Theorem 4.4]{koliha} in the case of the complex  Banach algebra $L(X).$ Denote by $\mbox{Hol}(T)$ the set of all analytic functions defined on an open neighborhood of $\sigma(T).$
\begin{thm}\label{thminverseanalytic} If $0\in  \sigma(T)\setminus\mbox{acc}(\mbox{acc}\,\sigma(T)),$ then for every spectral set $\sigma$ such that $0\in  \sigma$ and $\sigma\setminus\{0\}\subset\mbox{iso}\,\sigma(T)$ we have
$$T^{D}_{\sigma}=f_{\sigma}(T),$$
where  $f_{\sigma}\in \mbox{Hol}(T)$    is  such that  $f_{\sigma}=0$  in a neighborhood of $\sigma$ and $f_{\sigma}(\lambda) = \frac{1}{\lambda}$ in a neighborhood of $\sigma(T)\setminus\sigma.$  Moreover $\sigma(T^{D}_{\sigma})=\{0\}\cup\{\lambda^{-1} : \lambda \in \sigma(T)\setminus \sigma\}.$
\end{thm}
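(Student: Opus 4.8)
The plan is to realize $f_\sigma(T)$ through the holomorphic functional calculus together with the Riesz spectral decomposition attached to $\sigma,$ and then to match the outcome with the explicit formula defining $T^{D}_{\sigma}.$ Note first that the hypothesis $0\in\sigma(T)\setminus\mbox{acc}(\mbox{acc}\,\sigma(T))$ makes $T$ a $g_{z}$-invertible (non-invertible) operator by Theorem \ref{thm.i10}, so the objects $P_\sigma,$ $T^{D}_{\sigma}$ from Remark \ref{remadoublecomm}(ii) are available. Before anything else I would check that $f_\sigma$ genuinely lies in $\mbox{Hol}(T)$: since $\sigma$ is a spectral set, $\sigma$ and $\sigma(T)\setminus\sigma$ are disjoint compact subsets of $\C,$ hence admit disjoint open neighborhoods $U_{0}\supset\sigma$ and $U_{1}\supset\sigma(T)\setminus\sigma.$ Because $0\in\sigma$ we may shrink $U_{1}$ so that $0\notin\overline{U_{1}},$ whence $\lambda\mapsto 1/\lambda$ is holomorphic on $U_{1};$ then setting $f_\sigma\equiv 0$ on $U_{0}$ and $f_\sigma(\lambda)=1/\lambda$ on $U_{1}$ defines an element of $\mbox{Hol}(T)$ with the two prescribed local behaviors.

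Next I would bring in the Riesz decomposition induced by $\sigma.$ The spectral projection $P_\sigma$ is exactly the functional-calculus idempotent corresponding to $U_{0},$ and $(M_\sigma,N_\sigma):=(\mathcal{N}(P_\sigma),\R(P_\sigma))\in\mbox{Red}(T)$ satisfies $\sigma(T_{M_\sigma})=\sigma(T)\setminus\sigma$ and $\sigma(T_{N_\sigma})=\sigma;$ in particular $T_{M_\sigma}$ is invertible since $0\notin\sigma(T)\setminus\sigma.$ Because the holomorphic functional calculus is multiplicative and respects this reducing pair, one has $f_\sigma(T)=f_\sigma(T_{M_\sigma})\oplus f_\sigma(T_{N_\sigma}).$ On $N_\sigma$ the function $f_\sigma$ vanishes on a neighborhood of $\sigma(T_{N_\sigma})=\sigma,$ so $f_\sigma(T_{N_\sigma})=0_{N_\sigma};$ on $M_\sigma$ it coincides with $\lambda\mapsto 1/\lambda$ near $\sigma(T_{M_\sigma}),$ and since $T_{M_\sigma}$ is invertible the calculus gives $f_\sigma(T_{M_\sigma})=(T_{M_\sigma})^{-1}.$ Thus $f_\sigma(T)=(T_{M_\sigma})^{-1}\oplus 0_{N_\sigma},$ which is precisely $T^{D}_{\sigma}$ by Remark \ref{remadoublecomm}(ii), recalling $T^{D}_{\sigma}=(T+rP_\sigma)^{-1}(I-P_\sigma)=(T_{M_\sigma})^{-1}\oplus 0_{N_\sigma}.$

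For the spectral description I would invoke the spectral mapping theorem of the holomorphic functional calculus: $\sigma(T^{D}_{\sigma})=\sigma(f_\sigma(T))=f_\sigma(\sigma(T))=f_\sigma(\sigma)\cup f_\sigma(\sigma(T)\setminus\sigma)=\{0\}\cup\{\lambda^{-1}:\lambda\in\sigma(T)\setminus\sigma\},$ using that $f_\sigma\equiv 0$ on $\sigma$ and $f_\sigma(\lambda)=1/\lambda$ on $\sigma(T)\setminus\sigma.$ I expect the only delicate point to be the bookkeeping that links the abstract Riesz calculus to the concrete decomposition $(M_\sigma,N_\sigma)$ used earlier: namely that the idempotent $P_\sigma$ of the calculus is the very projection entering the definition of $T^{D}_{\sigma},$ that the spectra split exactly as $\sigma(T_{M_\sigma})=\sigma(T)\setminus\sigma$ and $\sigma(T_{N_\sigma})=\sigma,$ and that $f_\sigma(T)$ decomposes along the reducing pair. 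These are all standard Riesz--Dunford facts, so once they are invoked correctly the remainder is a routine functional-calculus computation; the real content is simply assembling them against the earlier definition of $T^{D}_{\sigma}.$
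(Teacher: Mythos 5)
Your proposal is correct, and it rests on the same pillars as the paper's proof: the Riesz--Dunford holomorphic functional calculus, the identification of $P_{\sigma}$ with a functional-calculus idempotent, and the spectral mapping theorem. The mechanism for the key identity $T^{D}_{\sigma}=f_{\sigma}(T)$ differs, though. The paper works entirely inside the function algebra: with $g$ the function equal to $1$ near $\sigma$ and $0$ near $\sigma(T)\setminus\sigma,$ it notes $P_{\sigma}=g(T),$ so the homomorphism property gives $T^{D}_{\sigma}=(T+rP_{\sigma})^{-1}(I-P_{\sigma})=f_{\sigma}(T)$ with $f_{\sigma}(\lambda)=(\lambda+rg(\lambda))^{-1}(1-g(\lambda)),$ and it then checks that this $f_{\sigma}$ has the prescribed germs. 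You argue in the opposite direction: starting from an arbitrary $f_{\sigma}$ with the prescribed germs, you evaluate $f_{\sigma}(T)$ blockwise along the Riesz pair $(M_{\sigma},N_{\sigma})=(\mathcal{N}(P_{\sigma}),\R(P_{\sigma}))$ and match the outcome $(T_{M_{\sigma}})^{-1}\oplus 0_{N_{\sigma}}$ against Remark \ref{remadoublecomm}(ii). What each buys: the paper's computation is shorter and needs no blockwise compatibility of the calculus, but it implicitly requires shrinking the neighborhood of $\sigma$ so that $-r$ lies outside it (otherwise $(\lambda+rg(\lambda))^{-1}$ need not be holomorphic there); your version sidesteps any such invertibility check, makes the geometric content (the splitting $\sigma(T_{M_{\sigma}})=\sigma(T)\setminus\sigma,$ $\sigma(T_{N_{\sigma}})=\sigma$) explicit, and shows directly that every $f_{\sigma}$ with the stated germs produces the same operator, which is what the phrasing of the theorem demands. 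Both arguments are complete.
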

\begin{proof} Let  $\Omega_{1}$ and $\Omega_{2}$   two  disjoint open sets such that $\sigma\subset \Omega_{1}$   and $\sigma(T)\setminus\sigma\subset \Omega_{2}$  (for the   construction of $\Omega_{1}$ and $\Omega_{2},$ see the  paragraph below) and let  $g \in \mbox{Hol}(T)$ be the function defined by
\[ g(\lambda) = \left\{ \begin{array}{ll}
         1 & \mbox{ if   $\lambda \in \Omega_{1}$}\\
         0 & \mbox{ if   $\lambda \in \Omega_{2}$}
\end{array} \right. \]
 It is clear that   $P_{\sigma}=g(T)$ and    as  $T^{D}_{\sigma}=(T+rP_{\sigma})^{-1}(I-P_{\sigma})$  (where $|r|>\underset{\lambda\in  \sigma}{\mbox{max}}\,|\lambda|$ be arbitrary), then the function  $f_{\sigma}(\lambda)=(\lambda +rg(\lambda))^{-1}(1-g(\lambda))$ has the required property. Moreover, we have     $\sigma(T^{D}_{\sigma})=f_{\sigma}(\sigma(T))=\{0\}\cup\{\lambda^{-1} : \lambda \in \sigma(T)\setminus \sigma\}.$
\end{proof}
According to   \cite{gohbergbook},   if   $\sigma$ is a spectral set of $T$  then there exist two  disjoint open sets $\Omega_{1}$ and $\Omega_{2}$   such that $\sigma\subset \Omega_{1}$ and $\sigma(T)\setminus\sigma\subset \Omega_{2}.$   Choose a Cauchy domains $S_{1}$ and $S_{2}$ such that $\sigma\subset S_{1},$ $\sigma(T)\setminus\sigma\subset S_{2},$  $\overline{S_{1}}\subset\Omega_{1}$ and $\overline{S_{2}}\subset\Omega_{2}.$ It follows that  the  spectral projection  corresponding to $\sigma$ is
$$P_{\sigma}=\frac{1}{2i\pi}\int_{\partial  S_{1}}(\lambda I- T)^{-1}d\lambda.$$  Moreover,   if $0\in  \sigma$  and $\sigma\setminus\{0\}\subset\mbox{iso}\,\sigma(T)$ then  from  Theorem  \ref{thminverseanalytic} we conclude that
$$T^{D}_{\sigma}=\frac{1}{2i\pi}\int_{\partial  S_{2}}\frac{1}{\lambda}(\lambda I- T)^{-1}d\lambda.$$

\section{Weak SVEP and applications}
Our   next theorem gives  new  characterization of some Browder's theorem  type classes in terms of spectra introduced and studied in this paper. This theorem is an improvement  of some recent results  dressed in \cite{gupta-kumar,karmouni-tajmouati}.
\begin{thm}\label{thm.j1} Let $T \in L(X).$ Then\\
(i)  $T \in (B)$ if and only if $\sigma_{g_{z}w}(T)=\sigma_{g_{z}d}(T).$\\
(ii)  $T \in (B_{e})$ if and only if $\sigma_{g_{z}f}(T)=\sigma_{g_{z}d}(T).$\\
(iii) $T \in (aB)$ if and only if $\sigma_{ug_{d}w}(T)=\sigma_{lg_{d}d}(T).$
\end{thm}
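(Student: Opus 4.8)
The plan is to reduce each of the three biconditionals to a single inclusion between accumulation sets. I would use three standard facts from \cite{aiena}: $T\in(B)$ iff $\mbox{acc}\,\sigma(T)\subseteq\sigma_{w}(T)$, $T\in(B_{e})$ iff $\mbox{acc}\,\sigma(T)\subseteq\sigma_{e}(T)$, and $T\in(aB)$ iff $\mbox{acc}\,\sigma_{a}(T)\subseteq\sigma_{uw}(T)$ (each is the rewriting of $\sigma_{b}=\sigma_{e}\cup\mbox{acc}\,\sigma$, resp. $\sigma_{ub}=\sigma_{uf}\cup\mbox{acc}\,\sigma_{a}$, combined with the trivial inclusions $\sigma_{e}\subseteq\sigma_{w}$ and $\sigma_{uf}\subseteq\sigma_{uw}$). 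On the $g_{z}$ side I would invoke Theorem \ref{thm.i10}, $\sigma_{g_{z}d}(T)=\mbox{acc}\,(\mbox{acc}\,\sigma(T))$, together with its left-hand counterpart $\sigma_{lg_{z}d}(T)=\mbox{acc}\,(\mbox{acc}\,\sigma_{a}(T))$, proved exactly as Theorem \ref{thm.i10} with $\sigma$ replaced by $\sigma_{a}$. The easy inclusions $\sigma_{g_{z}w}(T)\subseteq\sigma_{g_{z}d}(T)$, $\sigma_{g_{z}f}(T)\subseteq\sigma_{g_{z}d}(T)$ and $\sigma_{ug_{z}w}(T)\subseteq\sigma_{lg_{z}d}(T)$ hold unconditionally, since an invertible (resp. bounded below) summand is in particular Weyl (resp. Fredholm, upper semi-Weyl); so in each case only the reverse inclusion is at stake.

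The engine of the argument would be a punctured-neighbourhood lemma that \emph{collapses the two accumulation levels at a (semi-)Fredholm point}: if $T-\lambda$ is Fredholm (resp. upper semi-Fredholm), then $\lambda\in\mbox{acc}\,\sigma(T)$ iff $\lambda\in\mbox{acc}\,(\mbox{acc}\,\sigma(T))$ (resp. with $\sigma_{a}$). Indeed, by the punctured neighbourhood theorem the nullity $\alpha(T-\mu)$ is constant, say $=c$, on some $0<|\mu-\lambda|<\delta$; if $c=0$ then $T-\mu$ is invertible (resp. bounded below) there and $\lambda\notin\mbox{acc}\,\sigma(T)$, whereas if $c\geq1$ the whole punctured disc lies in $\sigma(T)$ and so $\lambda\in\mbox{acc}\,(\mbox{acc}\,\sigma(T))$; the converse is trivial. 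This immediately yields the forward implication of (i): assuming $\sigma_{g_{z}w}(T)=\sigma_{g_{z}d}(T)$, any $\lambda\notin\sigma_{w}(T)$ makes $T-\lambda$ Weyl, hence $g_{z}$-Weyl, so $\lambda\notin\sigma_{g_{z}w}(T)=\sigma_{g_{z}d}(T)=\mbox{acc}\,(\mbox{acc}\,\sigma(T))$; by the lemma $\lambda\notin\mbox{acc}\,\sigma(T)$, whence $\mbox{acc}\,\sigma(T)\subseteq\sigma_{w}(T)$ and $T\in(B)$.

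For the reverse implication of (i) I would show $\sigma_{g_{z}d}(T)\subseteq\sigma_{g_{z}w}(T)$ assuming $T\in(B)$. Fix $\lambda\notin\sigma_{g_{z}w}(T)$ with $(M,N)\in g_{z}KD(T-\lambda)$, $(T-\lambda)_{M}$ Weyl and $(T-\lambda)_{N}$ zeroloid; normalise $\lambda=0$ and set $A=T_{M}$, $B=T_{N}$, so $\sigma(T)=\sigma(A)\cup\sigma(B)$ and $\sigma_{w}(T)=\sigma_{w}(A)\cup\sigma_{w}(B)$. The crux is $0\notin\mbox{acc}\,\sigma(A)$. If it failed, the lemma applied to the Weyl operator $A$ would produce a punctured disc $D^{*}$ on which $A-\mu$ is Weyl but not invertible, so $D^{*}\subseteq\sigma(A)\setminus\sigma_{w}(A)$; since $B$ is zeroloid, $\sigma_{w}(B)\subseteq\sigma(B)$ accumulates only at $0$ and is therefore nowhere dense in $D^{*}$, giving $\mu\in D^{*}$ with $\mu\notin\sigma_{w}(B)$. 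Then $\mu\in\sigma(T)$ lies in the open set $D^{*}\subseteq\mbox{acc}\,\sigma(T)$ but $\mu\notin\sigma_{w}(T)$, contradicting $\mbox{acc}\,\sigma(T)\subseteq\sigma_{w}(T)$. Hence $0\notin\mbox{acc}\,\sigma(A)$, and since $\mbox{acc}\,\sigma(B)\subseteq\{0\}$ this forces $\mbox{acc}\,\sigma(T)$ to avoid a punctured neighbourhood of $0$, i.e. $0\notin\mbox{acc}\,(\mbox{acc}\,\sigma(T))=\sigma_{g_{z}d}(T)$. I expect this interaction between the Weyl summand $A$ and the zeroloid summand $B$ near the base point to be the main obstacle.

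Parts (ii) and (iii) should follow the same scheme almost verbatim, changing only the ambient spectrum and the regularity class. For (ii) I would replace Weyl by Fredholm, $\sigma_{w}$ by $\sigma_{e}$, and $\sigma_{g_{z}w}$ by $\sigma_{g_{z}f}$, using that the collapse lemma holds for Fredholm operators of arbitrary index (a Fredholm spectral accumulation point again fills a punctured disc, since $\mbox{ind}\neq0$ or $\alpha\geq1$ each rules out local invertibility). For (iii) I would replace $\sigma$ by $\sigma_{a}$, Weyl by bounded below / upper semi-Weyl, $\sigma_{w}$ by $\sigma_{uw}$, and use $\sigma_{lg_{z}d}(T)=\mbox{acc}\,(\mbox{acc}\,\sigma_{a}(T))$ together with the upper semi-Fredholm punctured neighbourhood theorem (constant nullity, closed range). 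In every case the easy inclusion plus the two-level collapse lemma deliver the forward direction, while the Weyl/Fredholm-versus-zeroloid dichotomy near the base point delivers the reverse one.
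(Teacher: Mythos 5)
Your parts (i) and (ii) are correct, and they take a genuinely different route from the paper: where the paper imports the characterizations of $(B)$ and $(B_{e})$ from Karmouni--Tajmouati and Gupta--Kumar and, for the converse, invokes \cite[Corollary 3.7]{aznay-ouahab-zariouh2} together with the ascent/descent identities $\tilde{p}=\tilde{q}=0$, your punctured-neighbourhood ``collapse'' lemma (constant nullity on a punctured disc, so either local invertibility or a full punctured disc inside $\sigma(T)$) makes both directions self-contained, needing only Theorem \ref{thm.i10} and elementary facts. The dichotomy argument playing the Weyl (resp.\ Fredholm) summand against the countable spectrum of the zeroloid summand is sound.

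Part (iii), however, has a genuine gap. Your argument rests on the identity $\sigma_{lg_{z}d}(T)=\mbox{acc}\,(\mbox{acc}\,\sigma_{a}(T))$, claimed to be ``proved exactly as Theorem \ref{thm.i10} with $\sigma$ replaced by $\sigma_{a}$.'' That proof cannot transfer: Theorem \ref{thm.i10} hinges on the Riesz functional calculus --- a partition of $\sigma(T)$ into disjoint closed sets produces a pair in $\mbox{Red}(T)$ via the spectral projection --- and no such calculus exists for partitions of $\sigma_{a}(T)$. Indeed only the inclusion $\mbox{acc}\,(\mbox{acc}\,\sigma_{a}(T))\subset\sigma_{lg_{z}d}(T)$ is true; the identity itself is false. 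Take $X=\ell^{2}(\N)\oplus L^{2}[0,1]$ and $T=\begin{pmatrix} R & e_{1}\otimes f\\ 0 & V\end{pmatrix}$, where $R$ is the right shift, $V$ the Volterra operator and $f$ a cyclic vector for $V^{*}$ (e.g.\ $f\equiv 1$). For $0<|\mu|<1$ the operator $T-\mu I$ is bounded below ($R-\mu I$ bounded below, $V-\mu I$ invertible), while $T$ itself is not: choosing unit vectors $y_{n}$ with $Vy_{n}\to 0$, compactness and injectivity of $V$ force $y_{n}\rightharpoonup 0$, hence $T(0,y_{n})=(\langle y_{n},f\rangle e_{1},Vy_{n})\to 0$. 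Thus $0\in\mbox{iso}\,\sigma_{a}(T)$, so $0\notin\mbox{acc}\,(\mbox{acc}\,\sigma_{a}(T))$. Yet $T$ is not left $g_{z}$-invertible: if $(M,N)\in\mbox{Red}(T)$ with $T_{M}$ bounded below and $T_{N}$ zeroloid, then $\sigma(T_{N})$ is countable and contained in $\sigma_{a}(T)$, so after absorbing (via its Riesz projection) the finite part of $\sigma(T_{N})$ lying on the unit circle into $M$, one may assume $T_{N}$ quasi-nilpotent, and then necessarily $N=\mathcal{H}_{0}(T)$; but a direct computation using $\R(R^{n})\perp\mbox{span}\{e_{1},\dots,e_{n}\}$ gives $\mathcal{H}_{0}(T)=\{0\}\oplus\{y:\langle V^{k}y,f\rangle=0\ \forall k\geq 0\}=\{0\}$ by cyclicity of $f$, forcing $T$ to be bounded below --- a contradiction.

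The repair is to keep the decomposition, not the spectrum, in charge, which is in effect what the paper does. For the forward implication of (iii) you only need the true inclusion above (immediate from a decomposition bounded below $\oplus$ zeroloid) plus your collapse lemma for upper semi-Fredholm operators relative to $\sigma_{a}$. For the reverse implication, from $0\notin\mbox{acc}\,\sigma_{a}(A)$ with $A$ the upper semi-Weyl summand, conclude that $A$ has the SVEP at $0$, hence $p(A)<\infty$ (upper semi-Fredholm operators have topological uniform descent, so implication (A) is an equivalence), hence $A$ is upper semi-Browder; then $T=A\oplus B$ is upper semi-$g_{z}$-Browder, i.e.\ left $g_{z}$-invertible by Theorem \ref{thm.i7}. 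This bypasses any purely spectral description of $\sigma_{lg_{z}d}(T)$, which, as the example shows, does not exist.
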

\begin{proof} (i) Let $\lambda \notin \sigma_{g_{z}w}(T).$ From Corollary \ref{coracc},  $\lambda  \notin \mbox{acc}\,\sigma_{pbw}(T)$ [note that $\mbox{acc}\,\sigma_{pbw}(T-\lambda I)=\mbox{acc}\,(\sigma_{pbw}(T))-\lambda$].   As $T \in (B),$ \cite[Theorem 2.6]{karmouni-tajmouati} (see also  \cite[Theorem 2.8]{gupta-kumar}) implies that $\lambda \notin \mbox{acc}\,\sigma_{gd}(T)$ and  from  Theorem  \ref{thm.i10},  it follows that  $\lambda \notin \sigma_{g_{z}d}(T).$    As the   inclusion  $\sigma_{g_{z}w}(T)\subset \sigma_{g_{z}d}(T)$  is always true then  $\sigma_{g_{z}w}(T)=\sigma_{g_{z}d}(T).$  Conversely,  let $\lambda \notin \sigma_{w}(T),$  then $\lambda \notin \sigma_{g_{z}w}(T)=\sigma_{g_{z}d}(T).$ On the other hand,  \cite[Corollary 3.7]{aznay-ouahab-zariouh2} implies that  there exists $(M,N) \in \mbox{Red}(T)$ such that   $\mbox{dim}N<\infty,$ $T_{M}-\lambda I$ is Weyl and   semi-regular with  $T_{N}-\lambda I$ is nilpotent.  Since  $T-\lambda I$ is  $g_{z}$-invertible then  $p(T_{M}-\lambda I)=\tilde{p}(T-\lambda I)=\tilde{q}(T-\lambda I)=q(T_{M}-\lambda I)=0,$ and so   $T_{M}-\lambda I$ is invertible.  Hence  $T-\lambda I$ is Browder and consequently  $T\in (B).$\\
By   \cite[Corollary 2.10]{karmouni-tajmouati} (see also \cite[Corollary 2.14]{gupta-kumar}), the point (ii)   go similarly with (i). Using  \cite[Theorem 2.7]{karmouni-tajmouati}, we obtain analogously the  point (iii).
\end{proof}
\begin{dfn}
Let $A$ be a  subset of $\mathbb{C}.$ We say that $T\in L(X)$ has  the Weak SVEP on $A$ ($T$ has the W$_{A}$-SVEP for brevity)  if there exists a subset $B \subset A$ such that $T$ has the SVEP on $B$ and $T^*$ has the SVEP on $A\backslash B$. If $T$  has  the  W$_{\mathbb{C}}$-SVEP then $T$ is said to have   the Weak SVEP ($T$ has the W-SVEP for brevity).
\end{dfn}
\begin{rema} (i) Let $A$ be a  subset of $\mathbb{C}.$ Then $T\in L(X)$ has  the W$_{A}$-SVEP if and only if for every $\lambda \in A,$ the operator $T$ or $T^{*}$ has the SVEP at $\lambda.$\\
(ii) If $T$ or $T^{*}$ has  the SVEP then $T$ has the W-SVEP. But the converse is not generally  true. The left shift operator $L\in L(\ell^{2}(\N))$ defined by $L(x_{1}, x_{2},\dots)=(x_{2},x_{3},\dots),$ has the W-SVEP but it  does not have the SVEP.\\
(iii) The operator $L\oplus L^{*}$ does not have the W-SVEP.
\end{rema}
The next theorem gives a sufficient condition for an operator    $T \in L(X)$ to   have   the W-SVEP.
\begin{thm}\label{thmwsvep} Let $T \in L(X).$  If   $$X_{T}(\emptyset)\times X_{T^{*}}(\emptyset)\bigsubset[1.3] \{(x,0): x\in X\}\bigcup \{(0,f): f\in X^{*}\},$$   then $T$ has  the  W-SVEP.
\end{thm}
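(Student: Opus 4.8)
The plan is to translate the product-inclusion hypothesis into an elementary dichotomy for the two local spectral subspaces and then to feed it into the standard correspondence between such subspaces and the SVEP. First I would record that $0\in X_{T}(\emptyset)$ and $0\in X_{T^{*}}(\emptyset)$, because the zero function solves $(T-\mu I)g(\mu)=0$ on all of $\mathbb{C}$, so that $\sigma_{T}(0)=\sigma_{T^{*}}(0)=\emptyset$. The hypothesis asserts that every pair $(x,f)\in X_{T}(\emptyset)\times X_{T^{*}}(\emptyset)$ lies in $\{(x,0):x\in X\}\cup\{(0,f):f\in X^{*}\}$, that is, satisfies $x=0$ or $f=0$. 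If both $X_{T}(\emptyset)$ and $X_{T^{*}}(\emptyset)$ contained a nonzero element, say $x_{0}\neq0$ and $f_{0}\neq0$, then $(x_{0},f_{0})$ would have both coordinates nonzero and would violate the inclusion. Hence the hypothesis is equivalent to $X_{T}(\emptyset)=\{0\}$ or $X_{T^{*}}(\emptyset)=\{0\}$.

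Next I would invoke the well-known equivalence that $T$ has the SVEP if and only if $X_{T}(\emptyset)=\{0\}$, and dually that $T^{*}$ has the SVEP if and only if $X_{T^{*}}(\emptyset)=\{0\}$ (see \cite{aiena}). The dichotomy obtained above then says precisely that $T$ has the SVEP or $T^{*}$ has the SVEP. In the former case $T$ has the SVEP at every $\lambda\in\mathbb{C}$, so a fortiori $T$ or $T^{*}$ has the SVEP at each $\lambda$; in the latter case the same conclusion holds with the roles reversed. By the first point of the remark following the definition of the Weak SVEP, this is exactly the assertion that $T$ has the W-SVEP (formally, one takes $B=\mathbb{C}$ in the first case and $B=\emptyset$ in the second).

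The only substantial ingredient is the cited equivalence, and the half I expect to be the main obstacle, should one prefer a self-contained argument, is that the failure of the SVEP produces a nonzero vector of empty local spectrum. I would argue by contraposition: if $T$ fails the SVEP at some $\lambda_{0}$, there is an analytic $f\not\equiv0$ on a disc $B(\lambda_{0},r)$ with $(T-\lambda I)f(\lambda)=0$; dividing by the appropriate power of $(\lambda-\lambda_{0})$ I may assume $a_{0}:=f(\lambda_{0})\neq0$, and comparing Taylor coefficients gives $(T-\lambda_{0}I)a_{0}=0$ together with $(T-\lambda_{0}I)a_{n}=a_{n-1}$ for the remaining coefficients $a_{n}$. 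Then $a_{0}\in X_{T}(\emptyset)\setminus\{0\}$: for $\mu\neq\lambda_{0}$ the function $g(\mu)=-(\mu-\lambda_{0})^{-1}a_{0}$ satisfies $(T-\mu I)g(\mu)=a_{0}$ thanks to $(T-\lambda_{0}I)a_{0}=0$, while on $B(\lambda_{0},r)$ the analytic function $h(\mu)=\sum_{n\geq0}a_{n+1}(\mu-\lambda_{0})^{n}$ satisfies $(T-\mu I)h(\mu)=a_{0}$; together these give $\sigma_{T}(a_{0})=\emptyset$. This yields $X_{T}(\emptyset)\neq\{0\}$ whenever the SVEP fails, which is the contrapositive of the direction needed, and the identical construction for $T^{*}$ settles $X_{T^{*}}(\emptyset)$.
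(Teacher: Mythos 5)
Your proof is correct, but it is organized quite differently from the paper's. The paper works pointwise: for each fixed $\lambda$ it takes arbitrary analytic solutions $f$ of $(T-\mu I)f(\mu)=0$ and $g$ of $(T^{*}-\nu I)g(\nu)=0$ near $\lambda$, observes via \cite[Theorem 2.9]{aiena} that every value $f(\mu)$, resp. $g(\nu)$, lies in $X_{T}(\emptyset)$, resp. $X_{T^{*}}(\emptyset)$, and then lets the hypothesis force $f\equiv 0$ or $g\equiv 0$, i.e. the SVEP of $T$ or of $T^{*}$ at that one point $\lambda$. You instead globalize at the outset: the hypothesis is equivalent to the dichotomy $X_{T}(\emptyset)=\{0\}$ or $X_{T^{*}}(\emptyset)=\{0\}$, which by the classical characterization of the SVEP (\cite[Theorem 2.14]{aiena}; you correctly reprove the nontrivial half with the standard Taylor-coefficient construction, and your verification that the candidate vector $a_{0}$ has empty local spectrum is sound) means that $T$ has the SVEP or $T^{*}$ has the SVEP; the W-SVEP then follows by taking $B=\mathbb{C}$ or $B=\emptyset$. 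The technical core is the same fact about values of analytic solutions, but your route yields a strictly stronger conclusion: one of $T$, $T^{*}$ has the SVEP on all of $\mathbb{C}$, not merely the pointwise disjunction that the paper's local case analysis produces. In particular, your reformulation bears on the open question stated right after the theorem: since the hypothesis is equivalent to the statement ``$T$ or $T^{*}$ has the SVEP'', and the W-SVEP is strictly weaker than this disjunction (consider $L\oplus (L^{*}+3I)$ with $L$ the left shift: the SVEP failures of this operator and of its adjoint occur only in the disjoint balls $B(0,1)$ and $B(3,1)$ respectively, so it has the W-SVEP, yet neither it nor its adjoint has the SVEP), your argument shows that the converse of the theorem is false.
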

\begin{proof} Let $\lambda \in \C$ and  $V,W \subset \C$   two open neighborhood of  $\lambda.$  Let  $f : V \longrightarrow  X$  and   $g : W \longrightarrow  X^{*}$  two analytic functions such that $(T - \mu I )f (\mu) = 0$ and $(T^{*}-v I )g(v) = 0,$ for every $(\mu, v) \in V\times W .$ If we take   $U:=V\cap W$   then  \cite[Theorem 2.9]{aiena} implies that $\sigma_{T}(f(\mu))=\sigma_{T}(0)=\emptyset=\sigma_{T^{*}}(0)=\sigma_{T^{*}}(g(\mu)),$  for every $\mu \in U.$    Hence $(f(\mu),g(v))\in X_{T}(\emptyset)\times X_{T^{*}}(\emptyset),$ for every $\mu, v \in U.$  So two cases are discussed. The first    there exists $\mu \in U$ such that  $g(\mu)\neq 0.$ As $(f(v),g(\mu))\in X_{T}(\emptyset)\times X_{T^{*}}(\emptyset),$ for every $v \in U$  then  by hypotheses $f\equiv 0$ in $U.$ The identity theorem for analytic functions  entails  that     $T$ has the SVEP at $\lambda.$ The second  $g(\mu)= 0,$ for every $\mu \in U.$     In  the same way, we prove that  $T^{*}$ has the SVEP at $\lambda.$  Hence $T$ has the W-SVEP.
\end{proof}
\noindent {\bf Question:} Similarly  to \cite[Theorem 2.14]{aiena} which characterizes  the SVEP of  $T \in L(X)$ in terms of its  local spectral subspace $X_{T}(\emptyset),$ we ask if the converse of  Theorem \ref{thmwsvep} is true?

\medskip
The next proposition    characterizes    the classes $(B)$ and $(aB)$ in terms  of the Weak SVEP.
\begin{prop}\label{propj.3}
  Let  $T \in L(X).$   The following hold.\\
(a)  For $\sigma_{*}\in \{\sigma_{w}, \sigma_{bw},  \sigma_{g_{z}w}\},$ the following statements are equivalent.
   \begin{enumerate}[nolistsep]
     \item[(i)] $T\in (B);$
     \item[(ii)]   $T$  has   the  Weak SVEP on  $\sigma_{*}(T)^{C};$
     \item[(iii)]  For all $\lambda \notin \sigma_{*}(T),$  $T\oplus T^{*}$ has  the  SVEP at $\lambda;$
     \item[(iv)]  For all $\lambda \notin \sigma_{*}(T),$  $T$  has  the  SVEP at $\lambda;$
     \item[(v)]  For all $\lambda \notin \sigma_{*}(T),$  $T^*$ has  the  SVEP at $\lambda.$
\end{enumerate}
(b)   For $\sigma_{*}\in \{\sigma_{e}, \sigma_{bf},  \sigma_{g_{z}f}\},$ the following statements are equivalent.
   \begin{enumerate}[nolistsep]
     \item[(i)]  $T\in (B_{e});$
     \item[(ii)]  For all $\lambda \notin \sigma_{*}(T),$  $T\oplus T^*$ has  the  SVEP at $\lambda.$
   \end{enumerate}
(c) For $\sigma_{*}\in \{\sigma_{uw}, \sigma_{ubw},   \sigma_{ug_{z}w}\},$ the following statements are equivalent.
   \begin{enumerate}[nolistsep]
     \item[(i)] $T\in (aB);$
     \item[(ii)]  $T$  has   the  Weak SVEP on  $\sigma_{*}(T)^{C};$
     \item[(iii)]  For all $\lambda \notin \sigma_{*}(T),$  $T\oplus T^{*}$  has  the  SVEP at $\lambda;$
     \item[(iv)]  For all $\lambda \notin \sigma_{*}(T),$  $T$  has  the  SVEP at $\lambda;$
     \item[(v)]  For all $\lambda \notin \sigma_{*}(T),$  $T^*$ has  the  SVEP at $\lambda.$
\end{enumerate}
\end{prop}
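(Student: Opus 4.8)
The backbone of the argument is a pointwise dichotomy: at each $\lambda$ lying off the relevant Weyl-type spectrum I will show that the a priori weak information ``$T$ or $T^{*}$ has SVEP at $\lambda$'' is in fact equivalent to the much stronger statement that $T-\lambda I$ is Browder-type (Browder, Drazin invertible, resp.\ $g_{z}$-invertible), from which both $T$ and $T^{*}$ inherit SVEP at $\lambda$. Granting such an upgrade, items (ii)--(v) of (a) collapse to the single assertion ``$T-\lambda I$ is Browder-type for every $\lambda\notin\sigma_{*}(T)$'', i.e.\ $\sigma_{\bullet}(T)\subseteq\sigma_{*}(T)$ where $\sigma_{\bullet}$ is the matching Browder-type spectrum; since the reverse inclusion is automatic this reads $\sigma_{\bullet}(T)=\sigma_{*}(T)$, and I will identify the latter with $T\in(B)$ (resp.\ $(B_{e})$, $(aB)$) through Theorem \ref{thm.j1} and its classical $\sigma_{w}$/$\sigma_{e}$ counterparts.

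For part (a) the heart is this upgrade, established separately on each regime. When $\lambda\notin\sigma_{w}(T)$ the operator $T-\lambda I$ is Weyl; being Fredholm it has topological uniform descent, so by the implications (A),(B) of the introduction SVEP of $T$ (resp.\ $T^{*}$) at $\lambda$ is equivalent to $p(T-\lambda I)<\infty$ (resp.\ $q(T-\lambda I)<\infty$), and \cite[Lemma 1.22]{aiena} then forces the other quantity finite, so $T-\lambda I$ is Browder and both have SVEP. When $\lambda\notin\sigma_{bw}(T)$ I would pass to Berkani's decomposition $(M,N)\in\mbox{Red}(T-\lambda I)$ with $(T-\lambda I)_{M}$ Weyl and $(T-\lambda I)_{N}$ nilpotent, reducing to the Weyl case on $M$ while the nilpotent summand carries SVEP for both $T$ and $T^{*}$. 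When $\lambda\notin\sigma_{g_{z}w}(T)$ the operator $T-\lambda I$ is $g_{z}$-Weyl, and Proposition \ref{prop2} delivers the upgrade directly, in both directions: $T-\lambda I$ is $g_{z}$-Browder ($=g_{z}$-invertible by Theorem \ref{thm.i6}) iff $T$ or $T^{*}$ has SVEP at $\lambda$, the converse resting on the fact that a zeroloid operator and its adjoint both have SVEP (the spectrum being countable with $0$ its only possible accumulation point, by Remark \ref{rema1.2}, any local analytic solution vanishes on the dense resolvent set). Finally (iv)$\Leftrightarrow$(i) amounts to $\sigma_{\bullet}(T)=\sigma_{*}(T)\Leftrightarrow T\in(B)$: for $\sigma_{w}$ this is the definition, for $\sigma_{bw}$ it is the equivalence of generalized Browder's theorem with Browder's theorem, and for $\sigma_{g_{z}w}$ it is exactly Theorem \ref{thm.j1}(i).

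Part (b) is different in that off $\sigma_{e}(T)$, $\sigma_{bf}(T)$, $\sigma_{g_{z}f}(T)$ the index of $T-\lambda I$ is no longer pinned to $0$, so the one-sided upgrade is unavailable; here one genuinely needs $T\oplus T^{*}$ to have SVEP at $\lambda$, which is equivalent to $p(T-\lambda I)<\infty$ and $q(T-\lambda I)<\infty$ holding simultaneously, i.e.\ to $T-\lambda I$ being Browder (resp.\ Drazin invertible). Globalizing yields $\sigma_{b}(T)=\sigma_{*}(T)$, which is $T\in(B_{e})$ via the essential Browder theory of \cite{aznay-ouahab-zariouh} and, in the $g_{z}$ case, Theorem \ref{thm.j1}(ii). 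Part (c) runs parallel to (a) with the upper semi-Weyl spectra, the classical SVEP characterization of $a$-Browder's theorem, and Theorem \ref{thm.j1}(iii).

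The step I expect to be genuinely delicate is the one-sided upgrade underlying part (c), and the distinct roles of $T$ and $T^{*}$ there. Off $\sigma_{uw}(T)$ the operator $T-\lambda I$ is only upper semi-Weyl, with index $\le 0$: SVEP of $T^{*}$ at $\lambda$ (finiteness of the descent) does force the index up to $0$ and hence full Browderness, but SVEP of $T$ at $\lambda$ (finiteness of the ascent alone) does not, so the symmetry between $T$ and $T^{*}$ that made (iv)$\Leftrightarrow$(v) automatic in part (a) breaks down. I would therefore treat the implications involving $T^{*}$ in (c) with particular care, testing each against the unilateral shift (where $T$ has SVEP everywhere while $T^{*}$ fails it on a disk meeting $\sigma_{uw}(T)^{C}$), and be prepared to lean on the one-sided $a$-Browder SVEP dichotomy rather than on a naive transfer from part (a).
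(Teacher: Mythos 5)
For parts (a) and (b) your argument is essentially the paper's own proof, only written out in full. The paper proves a single implication, (ii) $\Longrightarrow$ (i) for $\sigma_{*}=\sigma_{g_{z}w}$: given $\lambda\notin\sigma_{g_{z}w}(T)$ it takes $(M,N)\in\mbox{Red}(T)$ with $(T-\lambda I)_{M}$ Weyl and $(T-\lambda I)_{N}$ zeroloid, observes that SVEP of $T$ or of $T^{*}$ at $\lambda$ passes to $T_{M}$ or $(T_{M})^{*}$ (the zeroloid summand and its adjoint are harmless), upgrades $(T-\lambda I)_{M}$ to Browder via $\min\{p,q\}<\infty$ plus index zero, concludes $\lambda\notin\sigma_{g_{z}d}(T)$, and closes with Theorem \ref{thm.j1}; the cases $\sigma_{w},\sigma_{bw}$ are declared analogous, the remaining implications are cited from Aiena and the companion papers, and (b), (c) are said to ``go similarly''. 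Your pointwise-upgrade scheme, your use of Berkani's decomposition for $\sigma_{bw}$, your observation that a zeroloid operator and its adjoint have SVEP (countable spectrum, so analytic solutions vanish on a dense set), and your appeal to Proposition \ref{prop2} and Theorem \ref{thm.j1} reproduce exactly this route.

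The point you flagged in part (c) is not a delicate step you left open; it is a genuine error in the proposition itself, and the test you proposed settles it. Let $T=R$ be the unilateral right shift on $\ell^{2}(\N)$, so that $T^{*}=L$ is the left shift appearing in the paper's own remark on the Weak SVEP. Then $\sigma_{uw}(R)=\sigma_{ub}(R)$ is the unit circle, hence $R\in(aB)$ and (i) of (c) holds; moreover $\sigma_{ubw}(R)$ and $\sigma_{ug_{z}w}(R)$, being subsets of $\sigma_{uw}(R)$, are also contained in the circle. But the point spectrum of $L$ is the open unit disk, so $L=R^{*}$ fails SVEP at every $\lambda$ in the open disk, a set contained in $\sigma_{*}(R)^{C}$ for all three choices of $\sigma_{*}$. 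Thus (iii) and (v) of part (c) fail while (i), (ii), (iv) hold: the implications (i) $\Longrightarrow$ (iii) and (i) $\Longrightarrow$ (v) are false. What is true is that each of (iii), (v) (say for $\sigma_{*}=\sigma_{uw}$) is equivalent to the strictly stronger equality $\sigma_{uw}(T)=\sigma_{b}(T)$: SVEP of $T^{*}$ at $\lambda\notin\sigma_{uw}(T)$ gives $q(T-\lambda I)<\infty$, which together with $\mbox{ind}(T-\lambda I)\leq 0$ forces $T-\lambda I$ to be Browder, and conversely. The paper's proof never confronts this because ``(c) goes similarly with (a)'' tacitly reuses the index-zero symmetry between $T$ and $T^{*}$, which is precisely what upper semi-Weyl operators lack --- the obstruction you described. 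So part (c) must be corrected to the equivalence of (i), (ii), (iv) only; with that correction your proposal is complete and coincides with the paper's argument where that argument is valid.
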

   \begin{proof}
(a)  For $\sigma_{*}=\sigma_{g_{z}w},$ we have only to show the implication  ``(ii)  $\Longrightarrow$  (i)".  The several  other implications  are clair. Let $\lambda \notin \sigma_{g_{z}w}(T)$ then there  exist $(M,N)\in \mbox{Red}(T)$ such that  $(T-\lambda I)_{M}$ is Weyl   and $(T-\lambda I)_{N}$ is zeroloid. Hence  $T$ or $T^*$ has  the  SVEP at $\lambda$ is equivalent to say that  $T_{M}$ or $(T_{M})^{*}$ has  the  SVEP at $\lambda$ which is equivalent to say that   $\mbox{min}\,\{p((T-\lambda I)_{M}),q((T-\lambda I)_{M})\}<\infty.$ Therefore  $(T-\lambda I)_{M}$  is Browder and then $\lambda \notin \sigma_{g_{z}d}(T).$ Thus  $T \in (B).$\\
 For $\sigma_{*}\in \{\sigma_{w}, \sigma_{bw}\},$ the  implication  ``(ii)  $\Longrightarrow$  (i)"  is analogous to the one of   $\sigma_{*}=\sigma_{g_{z}w}.$   The other implications are already done  in  \cite{aiena}.  The assertions  (b) and (c) (in which some implications are already done in \cite{aiena,aznay-ouahab-zariouh3,gupta-kumar,karmouni-tajmouati}) go similarly with  (a).
   \end{proof}
The next theorem extends  \cite[Theorem 5.6]{aiena}.
\begin{thm}   Let  $T \in L(X)$ such that $\mbox{int}\,\sigma_{g_{z}w}(T)=\emptyset.$  The following statements are equivalent.
 \begin{enumerate}[nolistsep]
\item[(i)] $T \in (B);$
\item[(ii)] $T \in (B_{e});$
\item[(iii)] $T \in (aB);$
\item[(iv)] $T$ has  the SVEP;
\item[(v)] $T^{*}$ has the SVEP;
\item[(vi)] $T\oplus T^{*}$ has  the SVEP;
\item[(vii)]  $T$ has  the W-SVEP.
\end{enumerate}
\end{thm}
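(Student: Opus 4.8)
The plan is to reduce all seven equivalences to one elementary topological fact together with the standing hypothesis. For $S\in L(Y)$ (on any Banach space $Y$) write $\Xi(S):=\{\lambda\in\C : S \text{ does not have the SVEP at }\lambda\}$. The first step is to observe that $\Xi(S)$ is \emph{open}: if $S$ fails the SVEP at $\lambda_{0}$, then on some connected open neighborhood $U$ of $\lambda_{0}$ (a ball, say) there is an analytic $f:U\to Y$ with $f\not\equiv 0$ and $(S-\mu I)f(\mu)=0$ for all $\mu\in U$; by the identity theorem $f$ is identically zero on no nonempty open subset of $U$, so the same $f$ witnesses the failure of the SVEP at every point of $U$, whence $U\subset\Xi(S)$. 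Thus $\Xi(T)$ and $\Xi(T^{*})$ are open, and since $T\oplus T^{*}$ has the SVEP at $\lambda$ exactly when both $T$ and $T^{*}$ do, both $\Xi(T\oplus T^{*})=\Xi(T)\cup\Xi(T^{*})$ and $\Xi(T)\cap\Xi(T^{*})$ are open as well.

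Second, I would propagate the hypothesis to the two smaller spectra occurring in Proposition \ref{propj.3}. Since a $g_{z}$-Weyl operator is $g_{z}$-Fredholm of index $0$ and, by Remark \ref{rema2.5}, is both upper and lower semi-$g_{z}$-Weyl, we have $\sigma_{g_{z}f}(T)\subset\sigma_{g_{z}w}(T)$ and $\sigma_{ug_{z}w}(T)\subset\sigma_{g_{z}w}(T)$; hence $\sigma_{g_{z}f}(T)$ and $\sigma_{ug_{z}w}(T)$ also have empty interior. The principle I will invoke repeatedly is then immediate: an open set contained in a set with empty interior lies in that set's interior and is therefore empty.

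The remaining work is a direct translation through Proposition \ref{propj.3}. By part (a) with $\sigma_{*}=\sigma_{g_{z}w}$, the condition $T\in(B)$ is equivalent to each of the inclusions $\Xi(T)\subset\sigma_{g_{z}w}(T)$, $\Xi(T^{*})\subset\sigma_{g_{z}w}(T)$, $\Xi(T\oplus T^{*})\subset\sigma_{g_{z}w}(T)$ and $\Xi(T)\cap\Xi(T^{*})\subset\sigma_{g_{z}w}(T)$ (for the last, recall that having the W-SVEP on $\sigma_{g_{z}w}(T)^{C}$ means precisely $\Xi(T)\cap\Xi(T^{*})\subset\sigma_{g_{z}w}(T)$, while the W-SVEP on $\C$ means $\Xi(T)\cap\Xi(T^{*})=\emptyset$). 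Applying the principle to each of these open sets forces it to be empty, which gives in turn (iv), (v), (vi) and (vii); the converse directions are trivial because an empty failure set is contained in every spectrum. In the same way, part (b) with $\sigma_{*}=\sigma_{g_{z}f}$ gives (ii) $\Longleftrightarrow$ (vi), and part (c) with $\sigma_{*}=\sigma_{ug_{z}w}$ gives (iii) $\Longleftrightarrow$ (iv). Chaining the equivalences (i) $\Longleftrightarrow$ (iv) $\Longleftrightarrow$ (v) $\Longleftrightarrow$ (vi) $\Longleftrightarrow$ (vii), (ii) $\Longleftrightarrow$ (vi) and (iii) $\Longleftrightarrow$ (iv) closes the loop and shows all seven statements are equivalent.

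The only step that is not pure bookkeeping is the openness of $\Xi(\cdot)$, where the identity theorem does the real work; in writing it up I would be careful to take the witnessing neighborhood connected so that the identity theorem applies, and to verify the inclusions $\sigma_{g_{z}f}(T)\subset\sigma_{g_{z}w}(T)$ and $\sigma_{ug_{z}w}(T)\subset\sigma_{g_{z}w}(T)$ in the right direction so that empty interior is inherited by the smaller spectra. Everything else is a mechanical application of Proposition \ref{propj.3}.
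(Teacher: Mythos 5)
Your proposal is correct and takes essentially the same route as the paper: both arguments rest on Proposition \ref{propj.3} combined with the hypothesis $\mbox{int}\,\sigma_{g_{z}w}(T)=\emptyset$ and the identity theorem for analytic functions, which you package as the lemma that the set where SVEP fails is open, while the paper runs the same propagation argument inline to prove (i) $\Longrightarrow$ (vi) and then routes every other statement through (vi) and back to (i). The only organizational difference is that you settle (ii) and (iii) via parts (b) and (c) of Proposition \ref{propj.3}, which obliges you to check that $\sigma_{g_{z}f}(T)$ and $\sigma_{ug_{z}w}(T)$ inherit the empty-interior hypothesis through the inclusions into $\sigma_{g_{z}w}(T)$ --- a step you carry out correctly, so both loops of equivalences close.
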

\begin{proof} (i) $\Longrightarrow$ (vi)  As $T \in (B),$ Proposition \ref{propj.3} entails that     $T\oplus T^{*}$ has the SVEP on  $\sigma_{g_{z}w}(T)^{C}.$  Let   $\lambda \in \sigma_{g_{z}w}(T),$     $U \subset \C$ be  an  open neighborhood of  $\lambda$ and      $f : U \longrightarrow  X$  be  an analytic function which satisfies   $(\mu I  - T )f (\mu) = 0,$ for every $\mu \in U.$ The hypothesis that   $\mbox{int}\,\sigma_{g_{z}w}(T)=\emptyset$ implies that   there exists $\gamma \in U\cap  (\sigma_{g_{z}w}(T))^C.$ Let $V \subset U$  be  an neighborhood open  of $\gamma$  then $f \equiv 0$ on  $V,$   since $T$  has  the  SVEP at $\gamma.$  It then follows from   the identity theorem for analytic functions that   $f \equiv 0$ on  $U.$
So $T$ has the SVEP at $\lambda,$ and   analogously we prove that $T^{*}$ has the SVEP at $\lambda.$  Consequently  $T\oplus T^{*}$ has  the SVEP.  It is clear  that  the statement $(vi)$ implies without condition on $T$ all  other statements and   under the assumption $\mbox{int}\,\sigma_{g_{z}w}(T)=\emptyset,$  all statements imply the statement $(i).$   This completes the proof.
\end{proof}
\goodbreak

{\small \noindent Zakariae Aznay,\\  Laboratory (L.A.N.O), Department of Mathematics,\\Faculty of Science, Mohammed I University,\\  Oujda 60000 Morocco.\\
aznay.zakariae@ump.ac.ma\\

{\small \noindent Abdelmalek Ouahab,\\  Laboratory (L.A.N.O), Department of Mathematics,\\Faculty of Science, Mohammed I University,\\  Oujda 60000 Morocco.\\
ouahab05@yahoo.fr\\

 \noindent Hassan  Zariouh,\newline Department of
Mathematics (CRMEFO),\newline
 \noindent and laboratory (L.A.N.O), Faculty of Science,\newline
  Mohammed I University, Oujda 60000 Morocco.\\
 \noindent h.zariouh@yahoo.fr

\end{document}